\documentclass[a4paper,11pt,reqno]{article}
\usepackage[body=17cm,top=3cm,bottom=4cm]{geometry}
\usepackage[english]{babel}

\usepackage{amsrefs}
\usepackage{amsmath,amsfonts,amsthm,amssymb,mathrsfs,graphicx,color,dsfont,xcolor,bbm}
\usepackage{hyperref}
\usepackage{lipsum}
\usepackage{tikz}
\usepackage{geometry}
\usetikzlibrary{arrows,chains,matrix,positioning,scopes}
\usepackage{graphics}

\usepackage{cases}
\usepackage{bbm}
\usepackage{graphicx}
\usepackage{subfig}
\usepackage{float}
\usepackage{mathrsfs}
\usepackage{enumerate}
\usepackage{appendix}
\usepackage{times}
\usepackage{microtype}
\usepackage{mathrsfs}
\usepackage{tikz}
\usepackage{geometry}

\newcommand{\CE}{\mathcal{E}}

\DeclareMathOperator{\var}{Var}

\newcommand{\eps}{\varepsilon}

\makeatletter
\newcommand{\eqcolon}{\mathrel{\mathord{=}\raise.2\p@\hbox{:}}}
\newcommand{\coloneq}{\mathrel{\raise.2\p@\hbox{:}\mathord{=}}}
\makeatother
\newcommand{\mathD}{\mathrm{D}}
\newcommand{\mathd}{\mathrm{d}}

\newcommand{\nosymbol}{}

\newcommand{\tmop}[1]{\ensuremath{\operatorname{#1}}}
\newcommand{\tmtextit}[1]{{\itshape{#1}}}
\newcommand{\dd}{\mathrm{d}}

\newcommand{\para}{\,\mathord{\prec}\,}
\newcommand{\lpara}{\,\mathord{\succ}\,}
\newcommand{\mpara}{\,\mathord{\prec\!\!\!\prec}\,}
\newcommand{\reso}{\,\mathord{\circ}\,}


\newcommand{\CC}{\mathscr{C}}

\newcommand{\CA}{\mathscr{A}}
\newcommand{\CB}{\mathscr{B}}
\newcommand{\CD}{\mathscr{D}}
\newcommand{\CS}{\mathscr{S}}
\newcommand{\CF}{\mathscr{F}}
\newcommand{\DD}{\mathscr{D}}
\newcommand{\LL}{\mathscr{L}}

\newcommand{\PC}{\mathcal{P}}

\newcommand{\E}{\mathbb{E}}
\renewcommand{\P}{\mathbb{P}}
\newcommand{\Q}{\mathbb{Q}}
\newcommand{\R}{\mathbb{R}}
\newcommand{\Z}{\mathbb{Z}}
\newcommand{\T}{\mathbb{T}}
\newcommand{\C}{\mathbb{C}}
\newcommand{\N}{\mathbb{N}}
\newcommand{\1}{\mathds{1}}
\renewcommand{\textmu}{\mu}


\theoremstyle{plain}

\newtheorem{theorem}{Theorem}[section]

\newtheorem{corollary}[theorem]{Corollary}

\newtheorem{definition}[theorem]{Definition}

\newtheorem{lemma}[theorem]{Lemma}

\newtheorem{proposition}[theorem]{Proposition}

\newtheorem{remark}[theorem]{Remark}

\begin{document}

\title{An invariance principle for the two-dimensional \\ parabolic Anderson
model with small potential\thanks{We are very grateful to Massimiliano Gubinelli for countless discussions on the subject matter and in particular for suggesting the random operator approach that helped resolve the technically most challenging problem of the paper. We would also like to thank Hao Shen and Hendrik Weber for helpful discussions and suggestions.}}

\author{
  Khalil Chouk\\
  Humboldt-Universit{\"a}t zu Berlin \\
  Institut f\"ur Mathematik
  \and
  Jan Gairing \\
  Humboldt-Universit{\"a}t zu Berlin \\
  Institut f\"ur Mathematik
  \and
  Nicolas Perkowski\thanks{Financial support by the DFG via Research Unit FOR 2402 is gratefully acknowledged.} \\
  Humboldt-Universit{\"a}t zu Berlin \\
  Institut f\"ur Mathematik \\
  \texttt{perkowsk@math.hu-berlin.de}
}

\maketitle

\begin{abstract}
We prove an invariance principle for the two-dimensional lattice parabolic
 Anderson model with small potential. As applications we deduce a Donsker type convergence result for a discrete random polymer measure, as well as a universality result for the spectrum of discrete random Schr\"odinger operators on large boxes with small potentials. Our proof is based on paracontrolled distributions and some basic results for multiple stochastic integrals of discrete martingales.
\end{abstract}

\section{Introduction}

The discrete parabolic Anderson model (PAM) is the infinite-dimensional random ODE
\begin{equation}\label{eq:discrete pam intro}
   \partial_t v (t, i) = \Delta v (t, i) +  v (t, i) \eta (i),\quad (t,i)\in[0,+\infty)\times \Z^d,
\end{equation}
where $\Delta$ is the discrete Laplacian and $(\eta (i) : i \in \Z^d)$ is an i.i.d. family of random variables with sufficiently many moments. The discrete PAM has been intensely studied in the past decades due to the fact that it is the simplest known model that exhibits \emph{intermittency,} meaning roughly speaking that the bulk of the mass of the solution is concentrated in a few isolated islands. By now the intermittency properties of the discrete PAM are well understood, and it is known that the solution is intermittent whenever the $\eta(i)$ are truly random, even if they are bounded; see the surveys~\cite{CarmonaMolchanov1994} and~\cite{Koenig2016}. To get a better intuitive understanding of the PAM let us note that it models a branching random walk in random environment: Place independent particles on the lattice $\Z^d$ which all follow the dynamics of a continuous-time simple random walk, independently of $\eta$, and which at the lattice point $i$ get killed with rate $\eta(i)^-$ and branch into two new particles with rate $\eta(i)^+$; after the branching the two particles follow the same dynamics, independently of each other and all other particles. Then $v(t,i)$ is the expected number of particles at time $t$ in location $i$, conditionally on the random environment $\eta$. From this description it is intuitively convincing that $v$ should have high peaks in the regions where the environment is most favorable for the particles, and it should have deep valleys in between.

Therefore, we cannot expect to see a nontrivial behavior on large spatial scales. However, if we tune down the strength of the potential $\eta$ by considering
\begin{equation}\label{eq:discrete pam weak potential}
   \partial_t v (t, i) = \Delta v (t, i) + \varepsilon^{2-d/2} v (t, i) \eta (i),\quad (t,i)\in[0,+\infty)\times \Z^d,
\end{equation}
then for $d \le 3$ there is some hope to obtain a meaningful limit under the scaling $(t,x) \to (\varepsilon^{-2} t, \varepsilon^{-1} x)$ as long as $\eta(0)$ has $d/(2-d/2)+\delta$ moments. Indeed, on a time scale of length $\varepsilon^{-2}$ the simple random walk typically explores a region of size $\varepsilon^{-1}$, and we have 
\begin{align*}
   \lim_{\varepsilon \to 0} \E\left[ \max_{i \in (-\varepsilon^{-1},\varepsilon^{-1})^d} |\varepsilon^{2-d/2} \eta(i)|^{d/(2-d/2)+\delta} \right] & \le \lim_{\varepsilon \to 0} \sum_{i \in (-\varepsilon^{-1},\varepsilon^{-1})^d} \E[|\varepsilon^{2-d/2} \eta(i)|^{d/(2-d/2)+\delta}] \\
   & \lesssim \lim_{\varepsilon \to 0} \varepsilon^{\delta(2-d/2)} \E[|\eta(0)|^{d/(2-d/2)+\delta}] = 0,
\end{align*}
so that the influence of the potential felt by a typical particle converges to zero. Hence, we may hope that the intermittency properties of the solution do not dominate and there is a meaningful scaling limit. And indeed a formal computation suggests that if the $\eta(i)$ are centered (which can be always achieved by performing the change of variables $v(t) \to e^{-t \E[\eta(0)]} v(t)$), then in dimensions $d=1,2,3$ the rescaled solution $v(\varepsilon^{-2}t, \varepsilon^{-1}x)$ of~\eqref{eq:discrete pam weak potential} should converge to the solution $w$ of
\begin{equation}\label{eq:continuous pam intro no-renorm}
   \partial_t w (t, x) = \Delta w (t, x) + \sigma w (t, x) \xi (x), \hspace{2em} (t,x) \in \R_+ \times \R^d,
\end{equation}
where $\sigma^2 = \var(\eta(0))$ and $\xi$ is a space white noise, that is the centered Gaussian process with covariance $\E[\xi(x) \xi(y)] = \delta(x-y)$. Since we conjecture $w$ to be intermittent, proving this convergence would be the first step towards showing that the parabolic Anderson model is intermittent on the temporal scale $\varepsilon^{-2}$ and the spatial scale $\varepsilon^{-1}$ whenever the potential has at least strength $\varepsilon^{2-d/2}$, but not if it is weaker than that.

\medskip

Here we study the convergence of~\eqref{eq:discrete pam intro} to~\eqref{eq:continuous pam intro no-renorm}. We focus on the case $d=2$ and we consider the periodic model on $\Z_N^2 := (\Z/(N\Z))^2$ for $N \simeq \varepsilon^{-1}$. As we will see the naive derivation of~\eqref{eq:continuous pam intro no-renorm} does not give the full picture and there are more subtle effects to take into account: In dimensions $d=2,3$ the total number of particles grows exponentially fast and we have to look at the solution in a different scale to see a non-trivial behavior. More precisely, for
$t > 0$ the expected number of particles at time $\varepsilon^{-2} t$ will be of order $e^{t c_\varepsilon}$ with
$c_\varepsilon \simeq |\log \varepsilon|$ in $d=2$, so that we should instead consider $u_\varepsilon(t,x) :=
e^{- t c_\varepsilon} v(\varepsilon^{-2}t, \varepsilon^{-1}x)$ 
 which solves the modified equation
\begin{equation}\label{eq:scaled lattice pam renorm}
   \partial_t u_\varepsilon(t,x) = \Delta^\varepsilon_{\mathrm{per}} u_\varepsilon(t,x) + u_\varepsilon(t,x) (\eta_\varepsilon(x) - c_\varepsilon).
\end{equation}
Here $\Delta_{\mathrm{per}}^\varepsilon$ is the periodic discrete Laplacian, rescaled in such a way that it converges to the continuous periodic Laplace operator and $\eta_\varepsilon$ is a rescaled version of $\eta$ that converges to the white noise if we let $\varepsilon \to 0$.
This blow-up of the number of particles coincides nicely with the fact that the continuous equation~\eqref{eq:continuous pam intro no-renorm} only makes sense if a renormalization procedure is introduced. It was shown using regularity structures in~\cites{Hairer2014Regularity} and paracontrolled distributions in~\cite{Gubinelli2013} that if $\xi_\delta$ is a mollification of the white noise, then there exist diverging constants $(c_\delta)_{\delta>0}$ such that the solution $h_\delta$ of  
\begin{equation}\label{eq:continuous pam2}
   \partial_t h_\delta (t, x) = \Delta h_\delta (t, x) + h_\delta (t, x) (\xi_{\delta} (x)-c_{\delta})
\end{equation}
converges for $\delta \to 0$ to a nontrivial limit $u$ which solves an abstract equation of the form 
\begin{equation}\label{eq:continuous pam intro}
\partial_tu(t,x)=\Delta u(t,x)+u(t,x)\diamond\xi(x)
\end{equation}
with the renormalized product being formally given by $u\diamond\xi=u(\xi-\infty)$. Moreover, $u$ does not depend on the specific mollification of the white noise, and we have
\begin{equation}\label{eq:c-delta}
c_\delta=\frac{1}{2\pi}\log(\frac{1}{\delta})+O(1),
\end{equation}
where only the finite part of $c_\delta$ depends on the mollifier. 

\medskip

Our first result is that the solution $u_\varepsilon$ to~\eqref{eq:scaled lattice pam renorm} converges weakly to the solution $u$ of~\eqref{eq:continuous pam intro}, see Theorem~\ref{thm:main} below for a precise formulation. We are able to considerably weaken the assumptions on the potential $\eta$ and only require that it is given by appropriate martingale increments, and also we can replace the discrete Laplacian by the generator of any symmetric random walk whose increments have sufficiently many moments. The proof is based on paracontrolled distributions and a certain random operator technique developed in~\cite{Gubinelli2014KPZ}. The main technical contribution of this paper is to introduce suitable martingale tools in this context, which allow to control sufficiently many moments of the potential and some nonlinear functionals constructed from it, moment bounds which are needed as input for the paracontrolled machinery.

As a corollary of our convergence result we show a Donsker-type invariance principle for a certain random polymer measure, given by
\[
   \tilde{\Q}^\varepsilon_{T,x}(\dd \omega) = Z^{-1}_{\varepsilon,T,x} \exp\left(\int_0^{\varepsilon^{-2}T} \varepsilon \eta(\omega(s)) \dd s\right) \tilde{\P}^\varepsilon_x(\dd \omega),
\]
where $\tilde{\P}^\varepsilon_x$ is the law of a continuous-time random walk as above, started in $x$, and $Z_{\varepsilon,T,x}$ is a renormalization constant. We show in Theorem~\ref{thm:polymer} that the law of $(\varepsilon B^N_{\varepsilon^{-2} t})_{t\in [0,T]}$ under $\tilde{\Q}^\varepsilon_{T,x}$ converges to the continuum polymer measure which was recently constructed in~\cite{CannizzaroChouk2015}, a result which is universal for all appropriate random walk dynamics and laws of potentials.

Another simple consequence of Theorem~\ref{thm:main} is a universality result for the spectrum of the Anderson Hamiltonian on a large box with a small potential. Consider the operator $\mathscr H_\varepsilon$ on $\Z_N^2$ given by
\[
\mathscr H_\varepsilon v =-\Delta_{\mathrm{rw}}v+\eps v \eta,
\]
where $\Delta_{\mathrm{rw}}$ is the generator of a symmetric random walk with sufficiently many moments. We are interested in the behavior of the $k$ smallest eigenvalues $\Lambda_1^\varepsilon \le \dots \le \Lambda^\varepsilon_k$ of $\mathscr H_\varepsilon$, where $k$ is fixed and $N \to \infty$ (and thus $\varepsilon \simeq N^{-1} \to 0$). If we had $\eta \equiv 0$, then under the scaling $\varepsilon^{-2}(\Lambda_1^\varepsilon, \dots, \Lambda_k^\varepsilon)$ the eigenvalues would converge to the eigenvalues of the periodic Laplacian $-\Delta$, given by $0, 1, 1, 2, 2, 3, 3, \dots$. On the other side the minimal eigenvalue of the operator $v \mapsto \varepsilon v \eta$ clearly diverges to $- \infty$ when multiplied with $\varepsilon^{-2}$ because it is simply the minimum of $\varepsilon^{-1}\eta(i)$, $i \in \Z_N^2$. So one might guess and it turns out to be true that the bottom of the spectrum of $\mathscr H_\varepsilon v$ diverges to $- \infty$ when rescaled by a factor $\varepsilon^{-2}$. But what we are able to show is that a small logarithmic shift results in a nontrivial universal limit. More precisely, we prove in Theorem~\ref{thm:schroedinger} that for $c_\varepsilon \simeq |\log \varepsilon|$ as above we have
\[
   \varepsilon^{-2}\{(\Lambda_1^N,\dots,\Lambda_k^N)+ \varepsilon^2 c_\varepsilon(1,\dots,1)\}  \Rightarrow (\Lambda_1,,\dots,\Lambda_k)
\]
in distribution, where $(\Lambda_1,,\dots,\Lambda_k)$ are the $k$ minimal eigenvalues of the continuous Anderson Hamiltonian on the two-dimensional torus which was recently constructed in~\cite{AllezChouk2015}.

\medskip
The need for renormalization is a general feature of \emph{singular SPDEs} of which the 2d continuum PAM with white noise potential is the simplest example (in fact one can transform it into a well-posed equation by a change of variables~\cite{HairerLabbe}, but we will not make use of this). In recent years, following the fundamental work of Hairer~\cite{Hairer2011Rough, hairer_solving_2011}, there has been a breakthrough in the understanding of such equations which also include for example the $\Phi^4_d$ model in dimensions $d=2,3$~\cite{Hairer2014Regularity, CatellierChouk2013, Kupiainen2015}, the KPZ equation~\cite{hairer_solving_2011, Friz2014, Gubinelli2014KPZ, Hoshino2016} and its generalizations~\cite{Hoshino 2015, Kupiainen2016, Hairer2016, Bruned2015}, and the sine-Gordon equation~\cite{Hairer2015Sine}. The now available theories (regularity structures, paracontrolled distributions, and Kupiainen's renormalization group approach~\cite{Kupiainen2015}) all give the continuous dependence of the solution on some extended input, consisting of certain multilinear functionals constructed from the noise. So a priori they are well suited for proving the convergence of microscopic models to singular SPDEs. The main difficulty is that the theories are tailored for equations on Euclidean space (see however~\cite{Bailleul2016, Bailleul2015}), so some work is necessary to apply them to lattice systems such as the discrete PAM. Here we avoid this problem by finding a suitable extension of our lattice function to the continuous torus for which we can still write down a closed equation, a trick that was successfully used before in many works studying approximations of singular SPDEs~\cite{Hairer_Maas_2012, Hairer_Maas_2014, Mourrat2014, Gubinelli2014KPZ, Zhu2015, Zhu2014Discretization, Shen2016}. Alternatively, it would be possible to work with the lattice version of regularity structures that was developed by Hairer and Matetski in~\cite{HairerMatetski2015}. Once we are in a setting where we can apply one of the available theories for singular SPDEs, the next problem is how to control the multilinear functionals of the noise which are needed as input for the equation, and how to bound their moments to a sufficiently high order. In the Gaussian setting all moments are comparable and therefore it suffices to estimate the variance. However, even estimating the variance in a Gaussian setting can be tricky and over the past years Hairer and coauthors have made tremendous progress on finding efficient ways of doing so~\cite{Hairer2014Regularity, HairerPardoux, HairerQuastel, HairerXu, Bruned2015, Hairer2016}. In the non-Gaussian setting additional arguments are necessary, and different ways of tackling this problem have been developed in~\cite{Hairer2015Sine, HairerShenKPZ, ShenXu, ChandraShen}. Here we use an approach that is more closely related the one Mourrat and Weber used in~\cite{Mourrat2014} and Shen and Weber used in~\cite{Shen2016}. That is, we rely on martingale arguments and decompose the bilinear functional to be controlled in a sum of multiple stochastic integrals.

\medskip

The structure of the paper is as follows. In Section~\ref{sec:set up} we introduce our assumptions, state the convergence result for the discrete PAM, and show how to transform the lattice equation into a continuous PDE. Section~\ref{sec:paracontrolled} contains a short introduction to paracontrolled distributions and we briefly discuss the paracontrolled analysis of the continuous PAM before proceeding to use the paracontrolled tools to also control the continuous PDE derived from the lattice system. Here we obtain a pathwise convergence result under the assumption that some bilinear functionals constructed from the potential $\eta$ converge in the right topology. In Section~\ref{sec:martingales} we use discrete multiple stochastic integrals in order to prove the convergence of these bilinear functionals. Section~\ref{sec:polymer} contains the application to the polymer measure, and Section~\ref{sec:schroedinger} to the spectrum of the Anderson Hamiltonian.

\section{Mathematical set up}\label{sec:set up}

To rigorously state our convergence result we first have to introduce the required assumptions. We start by introducing two conditions on $N$:
\[
   \text{we have } N = \frac{2\pi}{\varepsilon} \qquad \text{and} \qquad N \text{ is odd.}
\]
Of course, we only assume $N$ to be odd for convenience since it simplifies the notation. Furthermore, we make the following assumptions on $\Delta_{\mathrm{rw}}$ and $\eta$:

\begin{description}
  \item[(H$_{\mathrm{rw}}$)] We have
\begin{equation}\label{eq:discrete operators def}
   \Delta_{\mathrm{rw}} \varphi (i) = \int_{\Z^2} \varphi (i + j) {\textmu} (\mathd j),
\end{equation}
where $\mu$ is a finite signed measure on $\Z^2$ with $\mu(\{j \}) \ge 0$ for all $j \neq 0$, and with $\int_{\Z^2} \mu(\dd j) = \int_{\Z^2} j_1 \mu(\dd j) =  \int_{\Z^2} j_2 \mu(\dd j) =  \int_{\Z^2} j_1 j_2 \mu(\dd j) = 0$, with $\int_{\Z^2} j_1^2 \mu(\dd j) = \int_{\Z^2} j_2^2 \mu(\dd j) = 2$ and with finite sixth moment. We also require $\mu$ to be radial (i.e. $j \mapsto \mu(\{j\})$ is a radial function) and that $\mu(\{(0,1)\}) > 0$.
  \item[(H$_{\mathrm{mart}}$)] There exists an enumeration $\zeta\colon \{0,\dots, N^2-1\} \to \Z_N^2$ of $\Z_N^2$ such that $(\eta_N(\zeta(k))_{k \le N^2-1}$ is a family of martingale differences (in its own filtration). Moreover, there exists $M>0$ such that for all $N \in \N$ and all $k \in \{0,\dots, N^2-1\}$
  \[
     \E[|\eta_N(\zeta(k))|^2|\eta_N(\zeta(0)), \dots, \eta_N(\zeta(k-1))] = 1, \quad  \E[|\eta_N(k)|^p |\eta_N(\zeta(0)), \dots, \eta_N(\zeta(k-1))] \le M
  \]
  for some $p > 6$.
\end{description}
Note that (H$_{\mathrm{rw}}$) is satisfied if $\mu$ corresponds to the transition rates of two independent symmetric random walks on $\Z$ that are combined into a random walk on $\Z^2$ and that satisfy appropriate moment conditions. Also (H$_{\mathrm{mart}}$) is always satisfied if $(\eta_N(i))_{i,N}$ is an i.i.d. family of centered random variables with unit variance and $\E[|\eta_1(0)|^{6+\delta}] < \infty$.

We consider the solution to
\begin{equation}\label{eq:lattice pam math}
   \partial_t v_N (t, i) = (\Delta_{\mathrm{rw}} v_N) (t, i) + \varepsilon v_N (t, i) \eta_N (i),\quad (t,i)\in[0,+\infty)\times \Z_N^2,
\end{equation}
and our aim is to show that under appropriate rescaling and renormalization it converges to a continuum limit. To even state such a convergence result, we first have to extend the rescaled solution from $\T^2_N := (\varepsilon \Z_N)^2$ to the continuous space $\T^2 := \R / (2\pi \Z)$. While a posteriori we will obtain the same limit for all ``reasonable'' functions on $\T^2$ that agree with the solution in the points of the lattice $\T_N^2$, there is one extension for which we can directly write down a closed equation and with which we will work throughout. Namely, we will use the discrete Fourier
transform~\cite{Hairer_Maas_2012,Hairer_Maas_2014,Mourrat2014}. For $\varphi \colon \T^2_N
\rightarrow \C$ we define
\[ \CF_{\T^2_N} \varphi (k) = \varepsilon^2
   \sum_{| \ell |_{\infty} < N / 2} \varphi (\varepsilon \ell) e^{- i \langle k, \varepsilon \ell \rangle}, \hspace{2em} k \in \Z^2_N, \]
where $| \ell |_{\infty}$ denotes the supremum norm on $\Z^2$. Set now
\[
   \mathcal{E}_N \varphi (x) = (2 \pi)^{- 2}
   \sum_{| k |_{\infty} < N / 2} \CF_{\T^2_N} \varphi (k) e^{i
   \langle k, x \rangle}, \hspace{2em} x \in \T^2,
\]
so that $\mathcal{E}_N \varphi$ is the function on $\T^2$ with Fourier
transform $\CF \mathcal{E}_N \varphi (k) = \CF_{\T^2_N} \varphi (k)
\1_{| k |_{\infty} < N / 2}$, $k \in \Z^2$. Then $\CE_N{\varphi} (x) = \varphi (x)$ for all $x \in
\T_N^2$ and by construction $\CE_N{\varphi}$ is infinitely smooth. If $\varphi$
is real valued, then so is $\CE_N{\varphi}$. 

\medskip

We are now able to state the hypothesis on our initial conditions:

\begin{description}
  \item[(H$_{\mathrm{init}}$)] There exists $\theta \in \R$ and $p>0$ such that the initial conditions $(v_N^0(i): i \in \Z_N^2)$ satisfy
  \[
     \sup_{N \in \N} \|  \varepsilon^{\theta} \CE_N v_N^0(\cdot / \varepsilon) \|_{L^p(\Omega; B_{1,\infty}^{0})} < \infty,
  \]
  and such that $( \varepsilon^{\theta} \CE_N v_N^0(\cdot / \varepsilon))_N$ converges in distribution in $B^{0}_{1,\infty}$ to a limit $u^0$. Here $B_{1,\infty}^{0}$ denotes a Besov space which will be defined in Section~\ref{sec:paracontrolled} below.
\end{description}

Two of the most important initial conditions for the lattice parabolic Anderson model are the constant function $v_N^0 \equiv 1$ which satisfies (H$_{\mathrm{init}}$) with $\theta = 0$, $p=\infty$ and $u^0 \equiv 1$, and the Kronecker delta $v^0_N(i) = \delta_{i,0}$, which satisfies (H$_{\mathrm{init}}$) with $\theta = -2$, $p = \infty$ and $u^0(x) = \delta(x)$, where $\delta$ denotes the Dirac delta in 0. The reason for working in the scale of spaces $B_{1,\infty}^\alpha$ rather than the more commonly used $B_{\infty, \infty}^\alpha$ is that it allows us to treat the Dirac delta, which in dimension $d$ is in $B^{-d(1-1/q)}_{q,\infty}$. It would be possible to relax the conditions on the initial condition and to allow anything with regularity better than $B^{-1+2/p}_{1,\infty}$, where $p$ is the integrability index of our potential. But since we do not see any application for this and since it would slightly complicate the notation we restrict ourselves to the case $u^0 \in B^0_{1,p}$.

\medskip

Let us rescale and renormalize $v_N$ by setting
\begin{equation}\label{eq:scaled lattice pam math}
   u_N(t,x) := e^{- t c_N} \varepsilon^\theta v_N ( t/ \varepsilon^2, x / \varepsilon), \hspace{2em} (t, x) \in \R_+ \times \T_N^2
\end{equation}
for
\begin{equation}
   c_N := (2 \pi)^{- 2} \sum_{| k |_{\infty} < N / 2} \frac{\1_{k \neq 0}}{| k |^2} \simeq \log N.
\end{equation}

\begin{lemma}   
The extension $\mathcal{E}_N u_N$ of the rescaled and renormalized process $u_N$ solves
\[
   \partial_t \mathcal{E}_N u_N = \Delta_{\mathrm{rw}}^N ( \mathcal{E}_N u_N) + \Pi_N (\mathcal{E}_N u_N \xi_N) - c_N ( \mathcal{E}_N u_N), \qquad \mathcal{E}_N u_N(0) = \varepsilon^{\theta} \CE_N v_N^0(\cdot / \varepsilon)
\]
with
\begin{gather*}
   \Delta_{\mathrm{rw}}^N \varphi (x) = \varepsilon^{-2} \int_{\Z^2} \varphi \left( x + \varepsilon y \right) \mu (\mathd y), \hspace{2em}
   \xi_N (x) = \varepsilon^{-1} (\mathcal{E}_N \eta_N(\cdot/\varepsilon)) (x), \\
   \Pi_N \varphi (x) = (2 \pi)^{- 2} \sum_{k \in \Z^2} e^{i\langle  k^N, x\rangle} \CF \varphi (k),
\end{gather*}
where
\[
   (k^N)_r = \arg \min \{ |\ell| : \ell = k_r + j N \text{ for some } j \in \Z\} \in (-N/2, N/2), \qquad r = 1,2.
\]
\end{lemma}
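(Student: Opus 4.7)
My plan is to derive the evolution equation for $u_N$ directly on the lattice $\T_N^2$ using the chain rule and the discrete equation for $v_N$, and then lift it to an identity on $\T^2$ by applying $\mathcal{E}_N$ term by term, using Fourier-analytic identities that account for aliasing.

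Differentiating $u_N(t,x) = e^{-tc_N}\varepsilon^{\theta}v_N(t/\varepsilon^2, x/\varepsilon)$ in time and substituting~\eqref{eq:lattice pam math} gives, for every $x \in \T_N^2$,
\begin{equation*}
   \partial_t u_N(t,x) = \varepsilon^{-2}\int_{\Z^2} u_N(t, x + \varepsilon y)\,\mu(\dd y) + \varepsilon^{-1}\eta_N(x/\varepsilon)\, u_N(t,x) - c_N u_N(t,x).
\end{equation*}
The $\varepsilon^{-2}$ in front of the Laplacian comes from the temporal rescaling, and combining this with the prefactor $\varepsilon$ in the discrete potential yields the $\varepsilon^{-1}$ on the potential term. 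Since $\mathcal{E}_N$ leaves lattice values unchanged, $\varepsilon^{-1}\eta_N(x/\varepsilon) = \xi_N(x)$ on $\T_N^2$.

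Next I would apply $\mathcal{E}_N$ to both sides and verify that each term coincides with its continuous counterpart. The operator $\mathcal{E}_N$ commutes trivially with $\partial_t$ and with the scalar factor $c_N$. For the random-walk Laplacian, $\Delta_{\mathrm{rw}}^N$ is a Fourier multiplier preserving the cut-off $\{|k|_\infty < N/2\}$, so $\Delta_{\mathrm{rw}}^N\mathcal{E}_N u_N$ is Fourier-truncated at $N/2$ and agrees on $\T_N^2$ with $\varepsilon^{-2}\int u_N(\cdot+\varepsilon y)\mu(\dd y)$; hence it coincides with the extension of that lattice function. The delicate step is the product: both $\mathcal{E}_N u_N$ and $\xi_N$ are Fourier-truncated at $N/2$, so their pointwise product on $\T^2$ has Fourier support in $\{|k|_\infty<N\}$ and does not itself belong to the range of $\mathcal{E}_N$. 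The discrete product on $\T_N^2$ corresponds in Fourier to convolution on $\Z_N^2$ (i.e.\ modulo $N$), while the continuous product corresponds to convolution on $\Z^2$; the two differ by aliasing of frequencies in $\{N/2 \le |k|_\infty < N\}$ back into $(-N/2, N/2)^2$, which is precisely the effect encoded by $\Pi_N$. Using $N\varepsilon = 2\pi$ and the discrete orthogonality relation
\begin{equation*}
   \varepsilon^2 \sum_{|\ell|_\infty < N/2} e^{i\langle k, \varepsilon \ell\rangle} = (2\pi)^2 \1_{k \in N\Z^2},
\end{equation*}
a direct Fourier computation shows that $\mathcal{E}_N((\varphi\psi)|_{\T_N^2}) = \Pi_N(\varphi\psi)$ for any smooth $\varphi,\psi$ on $\T^2$ with Fourier support in $\{|k|_\infty < N/2\}$; applied to $\varphi = \mathcal{E}_N u_N$ and $\psi = \xi_N$ this identifies the product term as $\Pi_N(\mathcal{E}_N u_N \cdot \xi_N)$.

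Combining these three identifications yields the claimed evolution equation on $\T^2$, while the stated initial condition is immediate from $u_N(0,x) = \varepsilon^{\theta} v_N^0(x/\varepsilon)$. The only genuinely technical step is the aliasing identity for the product; the remaining steps are routine chain rule and Fourier bookkeeping.
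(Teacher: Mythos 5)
Your proposal is correct and follows essentially the same route as the paper: you reduce the claim to (a) the observation that $\Delta_{\mathrm{rw}}^N$ is a Fourier multiplier preserving the cut-off $\{|k|_\infty<N/2\}$ and hence commutes with $\mathcal{E}_N$, and (b) the product identity $\mathcal{E}_N\bigl((\varphi\psi)|_{\T_N^2}\bigr)=\Pi_N(\varphi\psi)$ for Fourier-truncated $\varphi,\psi$, which is exactly the paper's $\mathcal{E}_N(\tilde\varphi\tilde\psi)=\Pi_N(\mathcal{E}_N\tilde\varphi\,\mathcal{E}_N\tilde\psi)$ after the lattice/truncated-function bijection. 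The only difference is that you spell out the aliasing computation behind (b) via the discrete orthogonality relation, whereas the paper delegates it to the discrete convolution formula~\eqref{eq:product discrete fourier} and a reference; the substance is the same.
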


\begin{proof}
   Start by noting that
   \[
      \CF_{\T_N^2} (\Delta_{\mathrm{rw}}^N \varphi) (k) = \varepsilon^{-2} \int_{\Z^2} e^{i \left\langle k, \varepsilon j \right\rangle} {\mu} (\mathd j)  \CF_{\T^2_N} \varphi (k),
   \]
   so $\Delta_{\mathrm{rw}}^N$ is a Fourier multiplication operator and therefore it commutes with $\mathcal{E}_N$. This leads to
   \[
      \partial_t \mathcal{E}_N u_N (t, x) = 
   \Delta_{\mathrm{rw}}^N ( \mathcal{E}_N u_N) (t, x) + \CE_N (u_N \varepsilon^{-1} \eta_N( \cdot / \varepsilon))(t,x) - c_N ( \mathcal{E}_N u_N) (t, x).
   \]
   It remains to show that for $\varphi, \psi \colon \T_N^2 \to \C$ we have $\CE_N(\varphi \psi) = \Pi_N (\CE_N \varphi \CE_N \psi)$, which can be verified by a direct computation using 
   \begin{equation}\label{eq:product discrete fourier}
      \CF_{\T_N^2} (\varphi \psi) (k) = (2 \pi)^{- 2} \sum_{| \ell |_{\infty} < N / 2} \CF_{\T^2_N} \varphi (\ell) \CF_{\T^2_N} \psi (k - \ell);
   \end{equation}
   see also Section~8 of~\cite{Gubinelli2014KPZ}.
\end{proof}

\begin{theorem}\label{thm:main}
   Make assumptions (H$_{\mathrm{rw}}$), (H$_{\mathrm{mart}}$) and (H$_{\mathrm{init}}$) and let $T>0$.    Then 
  $\mathcal{E}_N u_N$ converges in distribution in $C([0,T], B^0_{1,\infty})$ 
  to the paracontrolled solution $u$ of the continuous equation
  \[ \LL u = (\partial_t - \Delta) u = u \diamond \xi = u\xi - u \infty, \hspace{2em} u (0) = u_0, \]
  where $\xi$ is a space white noise on $\T^2$.
  \end{theorem}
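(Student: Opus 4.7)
The plan is to recast both the discrete and continuous PAM in a common paracontrolled framework on the torus, and then reduce the statement to a convergence result for a small collection of \emph{enhanced noise} data. Since the lemma preceding the theorem already rewrites $\mathcal{E}_N u_N$ as a continuous PDE driven by $\xi_N$, $\Delta^N_{\mathrm{rw}}$ and $\Pi_N$, the strategy is to show: (i) a \emph{discrete paracontrolled solution map} $\Phi_N$ which takes an enhanced noise $\Xi_N = (\xi_N, \xi_N \reso X_N, \ldots)$ and an initial condition to a solution of the continuous PDE derived from the lattice; (ii) this map is uniformly continuous (in an appropriate Besov topology) in the enhanced-noise variable, with the limit as $N \to \infty$ being exactly the paracontrolled solution map for the continuum PAM; and (iii) $\Xi_N$ converges to the continuum enhanced noise $\Xi$ after the renormalization by $c_N$. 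Combining these three pieces yields convergence of $\mathcal{E}_N u_N$ to $u$.

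For step (i)–(ii), I would follow the paracontrolled analysis of the 2d continuum PAM: write $u_N = \mathcal{E}_N u_N$ as $u_N = u_N \prec Y_N + u_N^\sharp$ where $Y_N$ solves $\mathscr{L}_N Y_N = \Pi_N \xi_N$ with $\mathscr{L}_N = \partial_t - \Delta^N_{\mathrm{rw}}$, and derive a fixed-point equation for $u_N^\sharp$ living in a space of regularity $1+\kappa$. The resonant product $(u_N \prec Y_N) \reso \xi_N$ is commuted using the standard commutator lemma, leaving a residual term which, once renormalized by subtracting $c_N u_N$, is controlled by the convergence of the random distribution $Y_N \reso \xi_N - c_N$ in $\mathcal{C}^{-\kappa}$. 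The uniformity in $N$ requires bounds on $\Delta^N_{\mathrm{rw}}$, its heat semigroup, $\Pi_N$, and the paraproduct/commutator estimates, uniform in $N$, which follow from (H$_{\mathrm{rw}}$) (the symmetry and moment conditions give the right second-order behavior of the symbol near zero, and $\mu((\{0,1)\})>0$ plus radiality rule out degeneracies). The random operator technique of \cite{Gubinelli2014KPZ} is invoked to handle the term $\Pi_N(\cdot \, \xi_N) - \cdot \prec \xi_N$ when testing against the (only $L^p$-valued) initial condition, upgrading the paracontrolled machinery so that it closes under (H$_{\mathrm{init}}$).

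Step (iii) is where the real work lies and where Section~\ref{sec:martingales} is applied. One needs to show
\begin{equation*}
\xi_N \to \xi, \qquad Y_N \reso \xi_N - c_N \to Y \reso \xi - c_\infty
\end{equation*}
(with possibly an additional corrector accounting for the difference between the discrete and continuum renormalization constants) in the relevant Besov spaces, in probability and with enough moments to pass to the limit in the fixed-point argument. The convergence of $\xi_N$ to white noise follows from an invariance principle in $B^{-1-\kappa}_{p,\infty}$ using (H$_{\mathrm{mart}}$). The nontrivial statement is the convergence of the quadratic functional $Y_N \reso \xi_N$; here one decomposes this object into diagonal (resonant) and off-diagonal pieces using the martingale enumeration $\zeta$ of (H$_{\mathrm{mart}}$), writes the off-diagonal part as a discrete second-order martingale stochastic integral and the diagonal part as the source of the divergent constant $c_N$, and then applies Burkholder–Davis–Gundy type bounds to get moments of order $p>6$. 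The requirement $p>6$ comes precisely from needing enough integrability to invoke Kolmogorov's criterion in $B^{-\kappa}_{q,\infty}$ while compensating for the dimension.

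The main obstacle I expect is step (iii), i.e.\ establishing the convergence and moment bounds for the resonant product $Y_N \reso \xi_N$ in the non-Gaussian setting. In the Gaussian/white-noise case, this is a second chaos object and Wick renormalization plus hypercontractivity immediately upgrade $L^2$ bounds to $L^p$ for any $p$; under (H$_{\mathrm{mart}}$) one only has the finite integrability afforded by the condition $p>6$, and there is no hypercontractivity. The resolution, following Mourrat–Weber and Shen–Weber, is to build up $Y_N \reso \xi_N$ as an iterated (``second-order'') martingale integral along the enumeration $\zeta$ and to apply BDG twice, controlling the conditional variance at each level using the moment assumption. Once these stochastic estimates are in hand, the tightness of $\mathcal{E}_N u_N$ in $C([0,T],B^0_{1,\infty})$ and identification of any limit point as the unique paracontrolled solution $u$ follow from the continuity of the discrete paracontrolled map established in (i)–(ii), yielding convergence in distribution as claimed.
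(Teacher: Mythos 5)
Your three-step strategy (discrete paracontrolled solution map, uniform continuity in the enhanced-noise data, convergence of that data) is exactly the structure the paper uses: Proposition~\ref{prop:deterministic discrete limit} is your steps (i)--(ii), and Corollary~\ref{cor:stoch data convergence} plus Lemma~\ref{lem:random operator bound} are your step (iii). The discrete multiple stochastic integral / BDG argument you outline for the second-chaos-type object $X_N \reso \xi_N - c_N$ is also the paper's route, so overall the proposal is sound.

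One substantive mischaracterization worth correcting: you attribute the need for the random operator technique of~\cite{Gubinelli2014KPZ} to ``testing against the (only $L^p$-valued) initial condition.'' That is not what the operator $A_N$ fixes. The difficulty is intrinsic to the discretization, not to the data: after writing the paracontrolled ansatz $u_N = \Pi_N(u_N \mpara X_N) + u_N^\sharp$, one hits the resonant product $(\Pi_N(u_N \para X_N)) \reso \xi_N$, and $\Pi_N(u_N \para X_N)$ is \emph{not} paracontrolled by $X_N$ with $N$-uniform bounds, so the usual commutator lemma cannot be applied directly. The operator
\[
   A_N(u) = \Pi_N\bigl(((\Pi_N - 1)(u \para X_N)) \reso \xi_N\bigr)
\]
isolates precisely the discrepancy between the discrete and continuous commutators, and the whole point of Lemma~\ref{lem:random operator bound} is to show it vanishes in $L(\CC_1^\alpha, \CC_1^{2\alpha-2})$. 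This would be needed even with a smooth bounded initial condition; the choice of $\CC_1$ spaces to accommodate the Dirac initial datum is an orthogonal concern. You should also note explicitly how the two modes of convergence in step (iii) are reconciled: the enhanced noise converges only in distribution, the operator norm of $A_N$ only in probability, and the ambient space $\CC^0_1 \times \CC^\alpha_\infty \times \CC^{2\alpha-2}_\infty$ is not separable. The paper handles this by observing that the limit law is supported on the (Polish) closure of smooth functions, applying Skorokhod representation, and then using that the pushed-forward $\tilde A_N$ has the same law as $A_N$ and hence still tends to $0$ in probability. Your tightness-and-limit-point-identification route can be made to work instead, but you would need to spell out how the $A_N\to 0$ input is inserted into a distributional limit argument; as written this step is left implicit.
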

  
\begin{proof}
   In Proposition~\ref{prop:deterministic discrete limit} we show that if $(u_0^N, \xi_N, X_N \reso \xi_N - c_N, A_N)$ converges to $(u_0, \xi, X\diamond \xi, 0)$ in $\CC_1^0 \times \CC^\alpha_\infty \times \CC^{2\alpha-2}_\infty \times L(\CC_1^\alpha, \CC_1^{2\alpha-2})$, then the solution $u_N$ to
   \[ \partial_t u_N = \Delta^N_{\mathrm{rw}} u_N + \Pi_N( u_N  \xi_N) - c_N u_N, \hspace{2em} u_N (0) = u_N^0, \]
   converges to $u$. In Corollary~\ref{cor:stoch data convergence} it is shown that $(u_0^N, \xi_N, X_N \reso \xi_N - c_N)$ converges to $(u_0, \xi, X\diamond \xi)$ in distribution in $\CC_1^0 \times \CC^\alpha_\infty \times \CC^{2\alpha-2}_\infty$. Now observe that while $\CC_1^0 \times \CC^\alpha_\infty \times \CC^{2\alpha-2}_\infty$ is not separable, the support of $(u_0, \xi, X\diamond \xi, 0)$ is contained in the closure of the smooth functions in that space, and this is a Polish space. Therefore, we can apply the Skorokhod representation theorem to find a new probability space and new $(\tilde u_0^N, \tilde \xi_N, \tilde X_N \reso \tilde \xi_N - c_N)$ with the same distribution as before and which converge almost surely. It then remains to observe that in Lemma~\ref{lem:random operator bound} the convergence of $A_N$ to $0$ in probability in $L(\CC_1^\alpha, \CC_1^{2\alpha-2})$ is shown, and since $\tilde A_N$ has the same distribution as $A_N$ it must also converge to $0$ in probability. This concludes the proof.
\end{proof}

\section{Paracontrolled analysis of the discrete equations}\label{sec:paracontrolled}

Abusing notation, we denote the extension $\CE_N u_N$ from now on simply by $u_N$, and we take the equation
\begin{equation}
  \label{eq:discrete pam} 
 \partial_t u_N = 
   \Delta_{\mathrm{rw}}^N u_N + \Pi_N (u_N \xi_N) - c_N u_N, \quad  u_N(0,x)= u_N^0(x)
\end{equation}
with $u_N^0 = \varepsilon^{\theta} \CE_N v_N^0(\cdot / \varepsilon)$ as the starting point of our analysis. We shall use the paracontrolled analysis developed in~{\cite{Gubinelli2013}} to derive a priori bounds on the solution which depend on norms of $\xi_N$ and $u_0^N$ that stay uniformly bounded in $N$. This will allow us to deduce the convergence. Let us start by briefly recalling the basics of paracontrolled distributions.

\subsection{Paracontrolled distributions and the continuous PAM}\label{sec:paracontrolled continuous}

Here we recall the basics of paracontrolled distributions, for an introduction see also the lecture notes~{\cite{Gubinelli2014EBP}}, and we sketch how to solve the continuous parabolic Anderson model in dimension 2.

\medskip

Throughout, we fix a Littlewood-Paley decomposition $(\Delta_j)_{j
\ge - 1}$, where
\[ \Delta_j u = \rho_j (\mathD) u = \CF^{- 1} \left( \rho_j \CF u \right) \]
with $\rho_j = \chi$ if $j = - 1$ and $\rho_j = \rho (2^{- j} \cdot)$ if $j
\ge 0$, for nonnegative radial functions $\chi, \rho \in C^{\infty}
(\R^d, \R)$, where $\rho$ is supported in a ball $\CB = \{ |
x | \le c \}$ and $\rho$ is supported in an annulus $\CA = \{ a
\le | x | \le b \}$ for suitable $a, b, c > 0$, such that
\begin{enumerate}
  \item $\chi + \sum_{j \ge 0} \rho (2^{- j} \cdot) \equiv 1$ and
  
  \item \label{property-2-dyadic}$\tmop{supp} (\chi) \cap \tmop{supp} (\rho
  (2^{- j} \cdot)) \equiv 0$ for $j \ge 1$ and $\tmop{supp} (\rho
  (2^{- i} \cdot)) \cap \tmop{supp} (\rho (2^{- j} \nosymbol \cdot))
  \equiv 0$ for all $i, j \ge 0$ with $| i - j | \ge 1$.
\end{enumerate}
We also use the notation
\[ \Delta_{\le j} f = \sum_{i \le j} \Delta_i f \]
as well as $K_i = \CF^{- 1} \rho_i$ so that
\[ K_i \ast f = \CF^{- 1} \left( \rho_j \CF f \right) = \Delta_i f. \]
For $\alpha \in \R$, the space $\CC^{\alpha}_p$ is
defined as $\CC^{\alpha}_p = B^{\alpha}_{p, \infty}$, where
\[ B^{\alpha}_{p, q} = B^{\alpha}_{p, q} (\T^d) = \left\{ f \in \CS'
   (\T^d) : \|f\|_{B^{\alpha}_{p, q}} = \| (2^{j \alpha} \| \Delta_j
   f\|_{L^{p}})_j \|_{\ell^q} < \infty \right\}, \]
and we write $\| \cdot \|_{\CC^{\alpha}_p} = \|
\cdot \|_{B^{\alpha}_{p, \infty}}$. We will need the following
embedding theorem for Besov spaces:

\begin{lemma}
  \label{lem:besov embedding}(Besov embedding) Let $1 \le p_1 \le
  p_2 \le \infty$ and $1 \le q_1 \le q_2 \le \infty$,
  and let $\alpha \in \R$. Then $B^{\alpha}_{p_1, q_1}$ is
  continuously embedded into $B^{\alpha - d (1 / p_1 - 1 / p_2)}_{p_2, q_2}$.
\end{lemma}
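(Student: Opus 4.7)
The plan is to reduce everything to a Bernstein-type inequality for Littlewood--Paley blocks, and then handle the summability index by the trivial embedding $\ell^{q_1} \hookrightarrow \ell^{q_2}$.

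First, I would establish (or invoke) Bernstein's inequality: if $f \in \CS'(\T^d)$ has Fourier support in a ball of radius $\lesssim 2^j$, then for $1 \le p_1 \le p_2 \le \infty$ one has
\[
   \|f\|_{L^{p_2}} \lesssim 2^{jd(1/p_1 - 1/p_2)} \|f\|_{L^{p_1}}.
\]
The idea is that $f = K \ast f$ for a Schwartz function $K$ with Fourier support in an annulus (or ball) of radius $\sim 2^j$, so $K$ has the form $K(x) = 2^{jd} \tilde K(2^j x)$ with $\tilde K \in \CS$ fixed; then Young's inequality $\|K \ast f\|_{L^{p_2}} \le \|K\|_{L^r} \|f\|_{L^{p_1}}$ with $1 + 1/p_2 = 1/r + 1/p_1$ yields the claim, since $\|K\|_{L^r} \lesssim 2^{jd(1 - 1/r)} = 2^{jd(1/p_1 - 1/p_2)}$. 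For the block $\Delta_{-1} f$ the factor is just $1$, which is harmless.

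Next, I would apply this to $\Delta_j f$ for each $j \ge -1$, using that $\Delta_j f$ has Fourier support in $\{|x| \lesssim 2^j\}$. Setting $\beta \coloneq \alpha - d(1/p_1 - 1/p_2)$, multiplying by $2^{j\beta}$ gives
\[
   2^{j\beta} \|\Delta_j f\|_{L^{p_2}} \lesssim 2^{j\alpha} \|\Delta_j f\|_{L^{p_1}}.
\]
Taking first the $\ell^{q_1}$ norm in $j$ on the right, and then using the elementary inclusion $\ell^{q_1}(\N) \hookrightarrow \ell^{q_2}(\N)$ for $q_1 \le q_2$ (which holds with constant $1$) on the left, yields
\[
   \|f\|_{B^\beta_{p_2, q_2}} \lesssim \|f\|_{B^{\alpha}_{p_1, q_1}},
\]
which is precisely the claimed continuous embedding.

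The main (only) nontrivial ingredient is Bernstein's inequality; everything else is bookkeeping with the Littlewood--Paley decomposition fixed at the beginning of Section~\ref{sec:paracontrolled}. Since this lemma is a completely standard fact from Besov space theory (see, e.g., Bahouri--Chemin--Danchin), the most natural choice in a paper of this kind is actually to give no more than a one-line reference, but the proof sketch above is the one I would write out if a self-contained argument were required.
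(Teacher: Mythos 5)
Your proof is correct and is the standard argument (Bernstein's inequality on each Littlewood--Paley block, followed by the trivial inclusion $\ell^{q_1}\hookrightarrow\ell^{q_2}$); the paper itself states this lemma without proof precisely because it is this classical fact. Your closing remark is the right instinct: a reference to Bahouri--Chemin--Danchin would suffice, and the sketch you give is exactly what one would write if a self-contained proof were demanded.
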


The product of two distributions can be (at least formally) decomposed as
\[ fg = \sum_{j \ge - 1} \sum_{i \ge - 1} \Delta_i f \Delta_j g =
   f \para g + f \lpara g + f \reso g. \]
Here $f \para g$ is the part of the double sum with $i < j - 1$, $f \lpara g$
is the part with $i > j + 1$, and $f \reso g$ is the ``diagonal'' part, where
$|i - j| \le 1$. More precisely,
\[ f \para g = g \lpara f = \sum_{j \ge - 1} \sum_{i = - 1}^{j - 2}
   \Delta_i f \Delta_j g = \sum_{j \ge -1} \Delta_{\le j-2} f \Delta_j g \hspace{2em} \text{and} \hspace{2em} f \reso g =
   \sum_{|i - j| \le 1} \Delta_i f \Delta_j g. \]
We call $f \para g$ and $f \lpara g$ \tmtextit{paraproducts}, and $f \reso g$
the \tmtextit{resonant} term. Bony's~{\cite{Bony1981}} observed that $f \para g$ (and thus $f
\lpara g$) is always a well-defined distribution and the only difficulty in
constructing $fg$ for arbitrary distributions lies in handling the diagonal
term $f \reso g$.

\begin{theorem}[Bony's paraproduct estimates, \cite{Gubinelli2014KPZ}, Lemma~6.1]\label{thm:paraproduct}
  Let $p \in [1,\infty]$, 
  $\beta \in \R$ and $f, g \in \CS'$. Then
  \begin{equation}
    \label{eq:para-1} \|f \para g\|_{\CC^\beta_p} \lesssim \min\{\|f\|_{L^{p}} \|g\|_{\CC^\beta_\infty}, \|f\|_{L^{\infty}} \|g\|_{\CC^\beta_p}\},
  \end{equation}
  and for $\alpha < 0$ furthermore
  \begin{equation}
    \label{eq:para-2} \|f \para g\|_{\CC^{\alpha + \beta}_p} \lesssim \min\{ \|f\|_{\CC^\alpha_\infty}  \|g\|_{\CC^\beta_p} , \|f\|_{\CC^\alpha_p}  \|g\|_{\CC^\beta_\infty}\}.
  \end{equation}
  If $\alpha + \beta > 0$, we also have
  \begin{equation}
    \label{eq:para-3} \|f \reso g\|_{\CC^{\alpha + \beta}_p} \lesssim \min\{ \|f\|_{\CC^\alpha_p}  \|g\|_{\CC^\beta_\infty}, \|f\|_{\CC^\alpha_\infty} \|g\|_{\CC^\beta_p} \} .
  \end{equation}
\end{theorem}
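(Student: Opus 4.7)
The plan is to prove all three estimates by writing each term as a sum of Littlewood-Paley blocks, exploiting the Fourier localization of each factor, and then applying Hölder's inequality and the definition $\CC^\alpha_p = B^\alpha_{p,\infty}$. The underlying observation is that $\Delta_{\le j-2}f$ has Fourier support in a ball of radius $\sim 2^{j-2}$ while $\Delta_j g$ has Fourier support in an annulus of scale $\sim 2^j$, so their product is Fourier-localized in an annulus of scale $\sim 2^j$; hence for the paraproduct only finitely many $j$ (with $|j-k|\le N_0$ for some absolute $N_0$) contribute to $\Delta_k(f\para g)$. By contrast, in the resonant part $\Delta_i f\,\Delta_j g$ with $|i-j|\le1$, the Fourier support sits in a \emph{ball} of radius $\sim 2^j$, so all $j\ge k-N_0$ may contribute to $\Delta_k(f\reso g)$, which is why we need the extra condition $\alpha+\beta>0$ to make a geometric series converge.

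For~\eqref{eq:para-1} I would bound $\|\Delta_{\le j-2}f\|_{L^p}\lesssim\|f\|_{L^p}$ uniformly in $j$ (via Young's inequality applied to the convolution $K_{\le j-2}\ast f$ with an $L^1$ kernel) and $\|\Delta_j g\|_{L^\infty}\lesssim 2^{-j\beta}\|g\|_{\CC^\beta_\infty}$, then use Hölder with exponents $(p,\infty)$ and sum over $j$ with $|j-k|\le N_0$. The second form of the minimum follows by the symmetric Hölder split $(\infty,p)$. For~\eqref{eq:para-2} with $\alpha<0$ the extra ingredient is
\[
\|\Delta_{\le j-2}f\|_{L^\infty}\lesssim\sum_{i\le j-2}\|\Delta_i f\|_{L^\infty}\lesssim\sum_{i\le j-2}2^{-i\alpha}\|f\|_{\CC^\alpha_\infty}\lesssim 2^{-j\alpha}\|f\|_{\CC^\alpha_\infty},
\]
where the geometric summation uses $\alpha<0$. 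Combining this with $\|\Delta_j g\|_{L^p}\lesssim 2^{-j\beta}\|g\|_{\CC^\beta_p}$ and Hölder gives the desired $2^{-j(\alpha+\beta)}$ decay; the other symmetric bound is obtained analogously.

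For the resonant estimate~\eqref{eq:para-3}, because $\Delta_i f\,\Delta_j g$ is supported in a ball of radius $\sim 2^j$ rather than an annulus, I would write
\[
\Delta_k(f\reso g)=\sum_{j\ge k-N_0}\Delta_k\!\!\sum_{|i-j|\le1}\Delta_i f\,\Delta_j g,
\]
apply Hölder with exponents $(p,\infty)$, use the Besov bounds on each factor to get a summand $\lesssim 2^{-j(\alpha+\beta)}\|f\|_{\CC^\alpha_p}\|g\|_{\CC^\beta_\infty}$, and sum: the condition $\alpha+\beta>0$ precisely guarantees that the geometric series $\sum_{j\ge k-N_0}2^{-j(\alpha+\beta)}$ converges and is bounded by $2^{-k(\alpha+\beta)}$ up to a constant. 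The other form of the minimum again follows from the symmetric Hölder split.

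The main technical obstacle is bookkeeping: one must carefully justify the Fourier support claims (which rely on the disjointness property~\ref{property-2-dyadic} of the dyadic partition) so that the sums are genuinely finite for the paraproducts and only one-sided infinite for the resonant term, and one must apply Young's inequality correctly to get the uniform-in-$j$ bound on partial sums $\Delta_{\le j-2}f$ in $L^p$ (this is where the boundedness of the kernels $K_{\le j-2}$ in $L^1$, independent of $j$, is essential). Everything else is a straightforward combination of Hölder, Bernstein-type localization, and geometric summation.
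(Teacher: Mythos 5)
The paper gives no proof of this theorem --- it is quoted directly from \cite{Gubinelli2014KPZ}, Lemma~6.1 --- and your argument is precisely the standard Littlewood--Paley proof used there and in the classical literature: annulus versus ball support of the blocks, H\"older with the two exponent splits $(p,\infty)$ and $(\infty,p)$ to get both terms of each minimum, the geometric summation of $\|\Delta_{\le j-2}f\|_{L^\infty}$ requiring $\alpha<0$, and the one-sided geometric series for the resonant term requiring $\alpha+\beta>0$. Your proof is correct and matches the intended argument.
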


\begin{corollary}\label{cor:product}
  Let $p \in [1,\infty]$ and $f \in \CC^{\alpha}_p$ and $g \in \CC^{\beta}_\infty$ with
  $\alpha + \beta > 0$. Then the product $(f, g) \mapsto fg$ is a bounded
  bilinear map from $\CC^{\alpha}_p \times \CC^{\beta}_\infty$ to $\CC^{\alpha \wedge
  \beta}_p$.
\end{corollary}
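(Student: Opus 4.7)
The plan is to use Bony's decomposition $fg = f \para g + f \reso g + g \para f$ and bound each of the three pieces using Theorem~\ref{thm:paraproduct}, then consolidate the bounds via Besov embedding. A preliminary observation makes the bookkeeping easy: since $\alpha+\beta>0$, at least one of $\alpha, \beta$ is strictly positive, so in particular $\alpha+\beta > \alpha\wedge\beta$. Consequently any regularity exponent $\gamma \in \{\alpha,\beta,\alpha+\beta\}$ satisfies $\gamma \ge \alpha\wedge\beta$, and the trivial embedding $\CC^\gamma_p \hookrightarrow \CC^{\alpha\wedge\beta}_p$ will upgrade every partial estimate into a bound on $\|\cdot\|_{\CC^{\alpha\wedge\beta}_p}$.

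For the resonant term the bound is immediate from \eqref{eq:para-3}:
\[
  \|f\reso g\|_{\CC^{\alpha+\beta}_p} \lesssim \|f\|_{\CC^\alpha_p}\|g\|_{\CC^\beta_\infty}.
\]
For the paraproduct $g\para f$ I distinguish two cases according to the sign of $\beta$. If $\beta<0$, estimate \eqref{eq:para-2} (applied with $g$ playing the role of the low-regularity factor in $L^\infty$-type space) yields $\|g\para f\|_{\CC^{\alpha+\beta}_p}\lesssim \|g\|_{\CC^\beta_\infty}\|f\|_{\CC^\alpha_p}$. If instead $\beta\ge 0$, then Besov embedding gives $\|g\|_{L^\infty}\lesssim \|g\|_{\CC^\beta_\infty}$ and \eqref{eq:para-1} yields $\|g\para f\|_{\CC^\alpha_p}\lesssim \|g\|_{L^\infty}\|f\|_{\CC^\alpha_p}$. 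The term $f\para g$ is treated symmetrically: \eqref{eq:para-2} when $\alpha<0$, and when $\alpha\ge 0$ the bound $\|f\|_{L^p}\lesssim \|f\|_{\CC^\alpha_p}$ (Besov embedding) together with \eqref{eq:para-1} gives $\|f\para g\|_{\CC^\beta_p}\lesssim \|f\|_{\CC^\alpha_p}\|g\|_{\CC^\beta_\infty}$.

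Summing the three bounds and absorbing the extra regularity via $\CC^\gamma_p \hookrightarrow \CC^{\alpha\wedge\beta}_p$ gives $\|fg\|_{\CC^{\alpha\wedge\beta}_p}\lesssim \|f\|_{\CC^\alpha_p}\|g\|_{\CC^\beta_\infty}$, which is the claimed bilinear bound. The only mildly delicate point is the borderline exponent $\alpha=0$ (resp.\ $\beta=0$), where the embedding $B^0_{p,\infty}\hookrightarrow L^p$ fails and \eqref{eq:para-2} requires strict negativity. This is handled at no cost by first embedding $\CC^0_p \hookrightarrow \CC^{-\varepsilon}_p$ for a sufficiently small $\varepsilon>0$ chosen so that $-\varepsilon+\beta>0$ still holds. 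I do not expect any genuine obstacle: the whole argument is a short and standard application of the paraproduct toolbox recalled in Theorem~\ref{thm:paraproduct} combined with Lemma~\ref{lem:besov embedding}.
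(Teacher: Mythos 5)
Your proof is correct and is the standard direct argument: the paper states this as an immediate corollary of Theorem~\ref{thm:paraproduct} without giving a proof, and your decomposition into $f\para g + g\para f + f\reso g$, the case split on the signs of $\alpha$ and $\beta$, and the $\varepsilon$-embedding trick to handle the borderline exponents $\alpha=0$ or $\beta=0$ are exactly how one fills in that gap.
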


The main idea of~{\cite{Gubinelli2013}} is that the paraproduct $f \para g$ is
a ``frequency modulation'' of $g$, and thus on small scales resembles $g$. By the philosophy of controlled paths~\cite{Gubinelli2004} we should  be able to control $(f \para g) h$ for some given $h$ provided that we have an a priori control on $g h$. Making these heuristics rigorous is the main achievement of the theory of paracontrolled distributions, and doing so is possible with the help of the following commutator estimate which is a generalization of one of the main results in~\cite{Gubinelli2013}.

\begin{lemma}[\cite{ProemelTrabs2015}, Lemma~4.4]
   Define the commutator $C(f,g,h) = (f \para g) \reso h - f (g \reso h)$. Then we have for all $p \in [1,\infty]$ and $\alpha <1$, $\beta, \gamma \in \R$ with $\beta + \gamma < 0 < \alpha + \beta + \gamma$ the bound
   \[
      \| C(f,g,h) \|_{\CC^{\alpha}_p} \lesssim \| f \|_{\CC^\alpha_p} \| g \|_{\CC^\beta_\infty} \| h \|_{\CC^\gamma_\infty}.
   \]
\end{lemma}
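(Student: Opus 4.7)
I plan to prove this estimate via an $L^p$-based Littlewood--Paley block analysis, mirroring the structure of the classical Gubinelli--Imkeller--Perkowski commutator lemma but working at the $(p,\infty)$-Besov level rather than the $(\infty,\infty)$-level.

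First, I would rewrite $C(f,g,h)=\sum_{|q-r|\le 1}\bigl[\Delta_q(f\para g)-f\,\Delta_q g\bigr]\Delta_r h$, interpreting $f(g\reso h)$ as the formal blockwise sum $\sum_{|q-r|\le1}f\,\Delta_q g\,\Delta_r h$ whose convergence will follow a posteriori from the bounds below. Using $f\para g=\sum_j S_{j-1}f\cdot\Delta_j g$ together with the Fourier-support restriction $|j-q|\le 2$, the $j=q$ bracket decomposes as $[\Delta_q,S_{q-1}f]\Delta_q g+(S_{q-1}f-f)\Delta_q g$, with analogous pieces for $j=q\pm 1,q\pm 2$.

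For the Bernstein commutator $[\Delta_q,S_{q-1}f]\Delta_q g=\int K_q(y)\bigl\{S_{q-1}f(\cdot-y)-S_{q-1}f\bigr\}\Delta_q g(\cdot-y)\,\dd y$, the pointwise $\alpha$-Hölder increment of $S_{q-1}f$ (for which $\alpha<1$ is essential) combined with the localisation $|y|\lesssim 2^{-q}$ of $K_q$ yields a factor $2^{-q\alpha}$ in $L^p$. For the main piece $(S_{q-1}f-f)\Delta_q g=-\sum_{i\ge q-1}\Delta_i f\cdot\Delta_q g$, swapping the order of summation over $q$ gives $-\sum_i\Delta_i f\cdot R_i$ with $R_i=\sum_{q\le i+1}\sum_{|q-r|\le 1}\Delta_q g\,\Delta_r h$; here $R_i$ has Fourier support in a ball of radius $\lesssim 2^{i+1}$ and $\|R_i\|_{L^\infty}\lesssim 2^{-i(\beta+\gamma)}\|g\|_{\CC^\beta_\infty}\|h\|_{\CC^\gamma_\infty}$, the largest block dominating because $\beta+\gamma<0$. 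Hölder then gives $\|\Delta_i f\cdot R_i\|_{L^p}\lesssim 2^{-i(\alpha+\beta+\gamma)}\|f\|_{\CC^\alpha_p}\|g\|_{\CC^\beta_\infty}\|h\|_{\CC^\gamma_\infty}$, and Fourier localisation confines contributions to the $P$-th block to $i\gtrsim P$, the geometric series converging by $\alpha+\beta+\gamma>0$.

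The main technical obstacle I foresee is that this direct accounting produces block decay $2^{-P(\alpha+\beta+\gamma)}$, naively placing $C(f,g,h)$ only in $\CC^{\alpha+\beta+\gamma}_p$, which is strictly weaker than the stated $\CC^\alpha_p$ since $\beta+\gamma<0$. To recover the sharper decay $2^{-P\alpha}$ one must re-organise $-\sum_i\Delta_i f\cdot R_i$ as a paraproduct of $f$ against the (ill-defined) resonance $g\reso h$: telescoping $R_i=R_{P-2}+\sum_{j\ge P-1}(R_j-R_{j-1})$ isolates Fourier-localised differences $R_j-R_{j-1}$ at scale $2^j$ which can be paired with the high-frequency projection of $f$ via the paraproduct bound $\|F\para f\|_{\CC^\alpha_p}\lesssim\|F\|_{L^\infty}\|f\|_{\CC^\alpha_p}$; the idea is to transfer the $i$-growth of the truncated resonances onto the $\CC^\alpha_p$-smoothness of $f$ rather than onto the decay of $R_j-R_{j-1}$ itself. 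Making this reorganisation rigorous, which is the technical heart of the Prömel--Trabs refinement, is where I locate the substantive work.
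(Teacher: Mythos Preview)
The paper does not actually prove this lemma; it is quoted from Pr\"omel--Trabs. More importantly, the target space $\CC^\alpha_p$ in the displayed bound is a typo for $\CC^{\alpha+\beta+\gamma}_p$. The genuine Pr\"omel--Trabs Lemma~4.4 places the commutator in $B^{\alpha+\beta+\gamma}_{p,\infty}$, and that is all the present paper ever uses: in the paracontrolled expansion of $u\xi$ one only needs $C(u^X,X,\xi)\in\CC^{2\alpha-2}_1$, and with $(\alpha,\beta,\gamma)=(\alpha,\alpha,\alpha-2)$ the $\CC^{3\alpha-2}_1$ bound already suffices.

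Your block analysis through the estimate $\|\Delta_i f\cdot R_i\|_{L^p}\lesssim 2^{-i(\alpha+\beta+\gamma)}$, together with the kernel-commutator piece, is correct and is precisely the standard proof of the $\CC^{\alpha+\beta+\gamma}_p$ bound. The ``main technical obstacle'' you flag in your last paragraph is not an obstacle but a symptom of the misprinted exponent. Your proposed reorganisation cannot close: the telescoped increments satisfy $\|R_j-R_{j-1}\|_{L^\infty}\sim 2^{-j(\beta+\gamma)}$, which \emph{diverges} as $j\to\infty$ since $\beta+\gamma<0$, so summing paraproduct bounds of the type $\|F\para f\|_{\CC^\alpha_p}\lesssim\|F\|_{L^\infty}\|f\|_{\CC^\alpha_p}$ over $j$ yields a divergent series. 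Heuristically the commutator mechanism gains exactly $\alpha$ derivatives over the (ill-defined) resonance $g\reso h$, moving you from regularity $\beta+\gamma$ to $\alpha+\beta+\gamma$; no further cancellation is available, and in general $C(f,g,h)\notin\CC^\alpha_p$. Stop at $\CC^{\alpha+\beta+\gamma}_p$: that is the correct statement, the correct proof, and what the paper needs.
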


Let us define for $p \in [1,\infty]$ and $\gamma \ge 0$ the space $\mathcal{M}^{\gamma}_T L^p = \{ v \colon [0, T] \rightarrow \CS'(\T^2) : \| v \|_{\mathcal{M}^{\gamma}_T L^p} < \infty \}$, where
\[
   \| v \|_{\mathcal{M}^{\gamma}_T L^p} = \sup_{t \in [0, T]} \{ \| t^{\gamma} v(t) \|_{L^p} \} .
\]
If further $\alpha \in (0,
2)$ and $T > 0$ we define the norm
\[
   \| f \|_{\LL^{\gamma, \alpha}_p(T)} = \max \big\{ \| t \mapsto t^{\gamma} f(t) \|_{C^{\alpha / 2}_T L^{p}}, \| f \|_{\mathcal{M}^{\gamma}_T \CC_p^{\alpha}} \big\}
\]
and the space $\LL^{\gamma, \alpha}_p(T) = \{ f \colon [0, T] \rightarrow
\CS': \| f \|_{\LL^{\gamma, \alpha}_p(T)} < \infty \}$ as well as
\[
   \LL^{\gamma,\alpha}_p = \big\{f\colon \R_+\to \CS': f|_{[0,T]} \in \LL^{\gamma,\alpha}_p(T) \text{ for all } T > 0\big\}.
\]
It will be convenient to introduce a modified paraproduct. Let $\varphi \in C^{\infty} (\R, \R_+)$ be nonnegative with
compact support contained in $\R_+$ and with total mass $1$, and
define for all $i \ge - 1$ the operator
\[ Q_i : C \CC^{\beta} \rightarrow C \CC^{\beta}, \hspace{2em} Q_i f (t) =
   \int_{0}^\infty 2^{- 2 i} \varphi (2^{2 i} (t - s)) f (s) \mathd
   s. \]
We will often apply $Q_i$ and other operators on $C \CC^{\beta}$ to functions
$f \in C_T \CC^{\beta}$ which we then simply extend from $[0, T]$ to
$\R_+$ by considering $f (\cdot \wedge T)$. With the help of $Q_i$,
we define the modified paraproduct
\[ f \mpara g = \sum_i (Q_i \Delta_{\le i - 2} f) \Delta_i g \]
for $f, g \in C \left( \R_+, \CS' \right)$. If $f$ or $g$ has a blow-up at zero which is integrable, we still define $f \mpara g$ in the same way.

\begin{lemma}[\cite{Gubinelli2014KPZ}, Lemmas~6.4, 6.5, 6.7]\label{lem:modified paraproduct exp}
  For any $\beta \in \mathbb{R}$, $p \in [1,\infty]$, $\gamma \in [0, 1)$, and $t>0$ we have
  \begin{equation}\label{eq:mod para-1 exp}
    t^{\gamma} \|f \mpara g (t) \|_{\CC_p^\beta} \lesssim \|f\|_{\mathcal{M}^{\gamma}_t L^{p}} \|g (t) \|_{\CC^\beta_\infty},
  \end{equation}
  and for $\alpha \in (0,2)$ furthermore
  \[
      t^{\gamma} \| (f \mpara g - f \para g) (t) \|_{\CC_p^{\alpha + \beta}} \lesssim \| f \|_{\LL^{\gamma, \alpha}_p(t)} \| g (t) \|_{\CC^\beta_\infty},
   \]
  as well as
  \[
     t^{\gamma} \left\| ( \LL (f \mpara g) - f \mpara ( \LL g ) ) (t) \right\|_{\CC_p^{\alpha + \beta - 2}} \lesssim \| f   \|_{\LL^{\gamma, \alpha}_p(t)} \| g (t) \|_{\CC^\beta_\infty},
  \]
  and for $\delta>0$ also
  \[
     \| f \mpara g \|_{\LL^{\gamma,\alpha}_p(T)} \lesssim \| f \|_{\LL^{\gamma,\delta}_p(T)} ( \| g \|_{C_T \CC^{\alpha}_\infty} + \left\| \LL g \right\|_{C_T \CC^{\alpha - 2}_\infty} ).
  \]
\end{lemma}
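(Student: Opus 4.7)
The key observation is that every summand in $f\mpara g = \sum_i (Q_i\Delta_{\le i-2}f)\Delta_i g$ has Fourier support in an annulus of size $\sim 2^i$, so $\Delta_j(f\mpara g)$ only receives contributions from finitely many $i\sim j$. This reduces each estimate to a control of the summand $(Q_i\Delta_{\le i-2}f(t))\Delta_i g(t)$ in $L^p$, uniformly in $i$ up to the correct powers of $2^i$.

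For estimate \eqref{eq:mod para-1 exp} I would apply H\"older to bound $\|(Q_i\Delta_{\le i-2}f(t))\Delta_i g(t)\|_{L^p}$ by $\|Q_i\Delta_{\le i-2}f(t)\|_{L^p}\|\Delta_i g(t)\|_{L^\infty}$, and then use the defining property of $\mathcal{M}^\gamma_t L^p$: since the kernel of $Q_i$ is supported in the past on a window of length $O(2^{-2i})$, one gets
\[
t^\gamma\|Q_i\Delta_{\le i-2}f(t)\|_{L^p}\lesssim \|f\|_{\mathcal{M}^\gamma_t L^p}\,\sup_{0\le s\le t}\,t^\gamma s^{-\gamma}\!\int 2^{-2i}\varphi(2^{2i}(t-s))\,\dd s,
\]
and the last supremum is uniformly bounded because $\gamma<1$.

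For the difference $f\mpara g-f\para g=\sum_i((Q_i-\mathrm{Id})\Delta_{\le i-2}f)\Delta_i g$, I would exploit the temporal H\"older regularity encoded in $\LL^{\gamma,\alpha}_p$: writing $(Q_i-\mathrm{Id})h(t)=\int 2^{-2i}\varphi(2^{2i}(t-s))(h(s)-h(t))\,\dd s$ and using $|s^\gamma h(s)-t^\gamma h(t)|\lesssim|t-s|^{\alpha/2}\|h\|_{\LL^{\gamma,\alpha}_p}$ yields the gain $2^{-i\alpha}$, which gives exactly the improved regularity $\alpha+\beta$. For the $\LL$-commutator estimate, I expand
\[
\LL(f\mpara g)-f\mpara(\LL g)=\sum_i\Big\{(\partial_t Q_i\Delta_{\le i-2}f)\Delta_i g-2\nabla(Q_i\Delta_{\le i-2}f)\cdot\nabla\Delta_i g-(\Delta Q_i\Delta_{\le i-2}f)\Delta_i g\Big\}.
\]
The last two terms are routine: spatial derivatives landing on $\Delta_i g$ or on $\Delta_{\le i-2}f$ each cost a factor $2^i$, and then the Bernstein-type bound $\|\nabla\Delta_{\le i-2}f\|_{L^p}\lesssim 2^{i(1-\alpha)}\|f\|_{\mathcal{M}^\gamma_t\CC^\alpha_p}$ together with $\|\Delta_i g\|_{L^\infty}\lesssim 2^{-i\beta}\|g\|_{\CC^\beta_\infty}$ produces the prefactor $2^{i(2-\alpha-\beta)}$. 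The genuinely delicate term is $\partial_t Q_i\Delta_{\le i-2}f$: using $\int\varphi'=0$, one writes
\[
\partial_t Q_i h(t)=2^{2i}\!\int\!\varphi'(2^{2i}(t-s))\bigl(h(s)-h(t)\bigr)\dd s,
\]
and the temporal $C^{\alpha/2}$-regularity of $t^\gamma f$ yields $t^\gamma\|\partial_t Q_i\Delta_{\le i-2}f(t)\|_{L^p}\lesssim 2^{i(2-\alpha)}\|f\|_{\LL^{\gamma,\alpha}_p(t)}$, giving the same overall scaling $2^{i(2-\alpha-\beta)}$.

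Finally, for estimate (4) I would combine the previous three bounds: the $\mathcal{M}^\gamma_T\CC^\alpha_p$ bound on $f\mpara g$ follows from \eqref{eq:mod para-1 exp} applied to $g\in C_T\CC^\alpha_\infty$, while $\|\LL(f\mpara g)\|_{\mathcal{M}^\gamma_T\CC^{\alpha-2}_p}$ is controlled by the commutator estimate and by \eqref{eq:mod para-1 exp} for $f\mpara(\LL g)$. The $C^{\alpha/2}$ temporal regularity of $t\mapsto t^\gamma f\mpara g(t)$ follows from the parabolic interpolation $C_T\CC^{\alpha-2}_p\cap \mathcal{M}^\gamma_T\CC^\alpha_p\hookrightarrow C^{\alpha/2}_T L^p$ applied after absorbing the $t^\gamma$ weight, using $\delta>0$ merely to supply enough regularity of $f$ to make the averaging kernel $Q_i$ well-behaved. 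The main technical obstacle is the commutator identity for $\LL\circ Q_i$, where both the quantitative use of $\int\varphi'=0$ and the careful balancing of temporal versus spatial scaling are needed to land in the precise regularity $\alpha+\beta-2$ claimed.
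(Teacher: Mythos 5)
The paper offers no proof of this lemma at all --- it is quoted directly from \cite{Gubinelli2014KPZ}, Lemmas~6.4, 6.5 and 6.7 --- and your reconstruction follows exactly the argument given there: frequency localization of each summand $(Q_i\Delta_{\le i-2}f)\Delta_i g$, the parabolic matching of the temporal averaging window $2^{-2i}$ with the spatial scale $2^i$ (using $\gamma<1$ to absorb the weight $t^\gamma$ near $t=0$), the gain $2^{-i\alpha}$ for $Q_i-\mathrm{Id}$ coming from the $C^{\alpha/2}_tL^p$ part of the $\LL^{\gamma,\alpha}_p$ norm, and the Leibniz expansion of $\LL$ together with $\int\varphi'=0$ for the heat-operator commutator. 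So the approach is the same as in the cited source and the proof is essentially correct.

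One caveat: the intermediate Bernstein bound $\|\nabla\Delta_{\le i-2}f\|_{L^p}\lesssim 2^{i(1-\alpha)}\|f\|_{\mathcal{M}^{\gamma}_t\CC^{\alpha}_p}$ that you use for the cross term $\nabla(Q_i\Delta_{\le i-2}f)\cdot\nabla\Delta_i g$ is valid only for $\alpha<1$ (for $\alpha\ge1$ the left-hand side does not decay in $i$ while the right-hand side does), so as written your argument for the third estimate covers only $\alpha\in(0,1)$ rather than the full stated range $\alpha\in(0,2)$; since the paper only ever applies the lemma with $\alpha<1$, this does not affect anything downstream, but the case $\alpha\in[1,2)$ would need a more careful treatment of that gradient term.
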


Finally we need the Schauder estimates for the Laplacian. We write $I f (t) = \int_0^t P_{t - s} f (s) \mathd s$.

\begin{lemma}[Schauder estimates, \cite{Gubinelli2014KPZ}, Lemma~6.6]\label{lemma:schauder exp}
   Let $\alpha \in (0, 2)$, $p \in [1,\infty]$, and $\gamma \in [0, 1)$. Then
  \begin{equation}\label{eq:schauder-heat exp}
     \|I f\|_{\LL^{\gamma, \alpha}_p(T)} \lesssim \| f \|_{\mathcal{M}^{\gamma}_T \CC^{\alpha - 2}_p}
  \end{equation}
  for all $T > 0$. If further $\beta \ge - \alpha$, then
  \begin{equation}\label{eq:schauder initial contribution exp}
     \| s \mapsto P_s u_0 \|_{\LL^{(\beta + \alpha) / 2, \alpha}_p(T)} \lesssim \| u_0 \|_{\CC_p^{- \beta}} .
  \end{equation}
  For all $\alpha \in \R$, $\gamma \in [0, 1)$, and $T > 0$ we have
  \begin{equation}\label{eq:schauder without time hoelder exp}
     \|I f\|_{\mathcal{M}^{\gamma}_T \CC^{\alpha}_p} \lesssim \| f \|_{\mathcal{M}^{\gamma}_T \CC^{\alpha - 2}_p}.
  \end{equation}
\end{lemma}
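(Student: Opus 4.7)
The approach is the now-standard Littlewood--Paley analysis of the heat semigroup, with careful bookkeeping of the singular weight $t^{\gamma}$ near $t = 0$. The key input is the blockwise smoothing estimate
\[
   \|\Delta_j P_{t} g\|_{L^p} \lesssim e^{-c t 2^{2 j}} \|\Delta_j g\|_{L^p}, \qquad j \ge 0,
\]
which follows from the fact that $P_t$ is a Fourier multiplier with symbol $e^{-t|k|^2}$ combined with Young's inequality on the localized kernel (and for $j = -1$ one just uses $\|P_t\|_{L^p \to L^p} \lesssim 1$). With this in hand the three bounds are proved by similar templates, so the real content lies in handling the singular weight via a splitting of the convolution at $t/2$.

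First I would prove \eqref{eq:schauder without time hoelder exp}. For $j \ge 0$, writing $\Delta_j I f(t) = \int_0^t P_{t-s} \Delta_j f(s)\,\mathrm{d}s$ and applying the blockwise bound gives
\[
   \|\Delta_j I f(t)\|_{L^p} \lesssim 2^{-j(\alpha-2)} \|f\|_{\mathcal{M}^{\gamma}_t \CC_p^{\alpha-2}} \int_0^t e^{-c(t-s)2^{2j}} s^{-\gamma}\,\mathrm{d}s.
\]
Splitting the integral at $t/2$, one piece gives $e^{-ct 2^{2j-1}} t^{1-\gamma}/(1-\gamma)$ and the other gives $\lesssim (t/2)^{-\gamma} 2^{-2j}$; combined they are dominated by $t^{-\gamma} 2^{-2j}$, uniformly in $j$ and in $t 2^{2j}$. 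Multiplying by $t^{\gamma} 2^{j\alpha}$ the factors balance perfectly.

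Next for \eqref{eq:schauder-heat exp} one only needs the Hölder-in-time part. For $0 \le s < t$, write
\[
   I f(t) - I f(s) = \int_s^t P_{t-r} f(r)\,\mathrm{d}r + \int_0^s (P_{t-s} - \Id) P_{s-r} f(r)\,\mathrm{d}r.
\]
The first integral is bounded using $\|P_{t-r} f(r)\|_{L^p} \lesssim (t-r)^{(\alpha-2)/2} \|f(r)\|_{\CC^{\alpha-2}_p}$ (again blockwise); the second uses the classical identity $\|(P_{h}-\Id) g\|_{L^p} \lesssim h^{\alpha/2} \|g\|_{\CC^\alpha_p}$ together with the already-established control $\|P_{s-r} f(r)\|_{\CC^\alpha_p} \lesssim (s-r)^{-1} \|f(r)\|_{\CC^{\alpha-2}_p}$. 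The weight $r^{-\gamma}$ is absorbed exactly as in the $t/2$ splitting above, producing a factor $s^{-\gamma} (t-s)^{\alpha/2}$ on the right hand side. Finally \eqref{eq:schauder initial contribution exp} is the pure heat smoothing statement: blockwise $\|\Delta_j P_t u_0\|_{L^p} \lesssim e^{-ct 2^{2j}} 2^{j\beta} \|u_0\|_{\CC_p^{-\beta}}$, and $\sup_j e^{-ct 2^{2j}} 2^{j(\alpha+\beta)} \lesssim t^{-(\alpha+\beta)/2}$, with the time-Hölder part following by the same $(P_{h}-\Id)$ identity.

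The only real obstacle is this weight bookkeeping in the splitting argument: one must verify that the pre-$t/2$ and post-$t/2$ contributions balance precisely so that the singularity at $s = 0$ is absorbed into the prescribed $t^{-\gamma}$ weight, and no extra logarithmic factors appear at the critical exponents. Once that is checked (and since this lemma is essentially the content of \cite{Gubinelli2014KPZ}, Lemma 6.6), one may simply invoke the cited statement; I would only reproduce the splitting computation explicitly if a reader asked for self-containedness.
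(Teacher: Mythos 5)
Your overall strategy is the standard Littlewood--Paley proof of these weighted Schauder estimates, which is exactly how the cited reference (\cite{Gubinelli2014KPZ}, Lemma~6.6) proceeds; the paper itself offers no proof beyond the citation. The bounds \eqref{eq:schauder without time hoelder exp} and \eqref{eq:schauder initial contribution exp} are fine as you sketch them: the $t/2$ splitting together with $\sup_{x\ge 0} x e^{-cx}<\infty$ does absorb the weight without logarithmic loss.

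However, the step you propose for the time-H\"older part of \eqref{eq:schauder-heat exp} fails as written. For the term $\int_0^s (P_{t-s}-\Id)P_{s-r}f(r)\,\mathrm{d}r$ you combine $\|(P_h-\Id)g\|_{L^p}\lesssim h^{\alpha/2}\|g\|_{\CC^\alpha_p}$ with $\|P_{s-r}f(r)\|_{\CC^\alpha_p}\lesssim (s-r)^{-1}\|f(r)\|_{\CC^{\alpha-2}_p}$; this produces the integrand $(t-s)^{\alpha/2}(s-r)^{-1}r^{-\gamma}$, whose singularity at $r=s$ is \emph{not} integrable, so the estimate cannot be closed this way for any $\alpha$. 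The difficulty here is not the weight $r^{-\gamma}$ at $r=0$ but the resonance at $r=s$, and it requires a second splitting of the integral at $r=s-(t-s)$ (assuming $t-s\le s$): on $[s-(t-s),s]$ bound $(P_{t-s}-\Id)$ trivially by $2$ and use $\|P_{s-r}f(r)\|_{L^p}\lesssim (s-r)^{(\alpha-2)/2}\|f(r)\|_{\CC^{\alpha-2}_p}$, which integrates to $(t-s)^{\alpha/2}$; on $[0,s-(t-s)]$ use the stronger estimate $\|(P_h-\Id)P_\tau g\|_{L^p}\lesssim h\,\tau^{-2+\alpha/2}\|g\|_{\CC^{\alpha-2}_p}$ (from $(P_h-\Id)=\int_0^h\Delta P_\sigma\,\mathrm{d}\sigma$ and the blockwise decay), whose integral over $\tau=s-r\ge t-s$ again yields $(t-s)^{\alpha/2}$. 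Two further small points you should make explicit: the norm controls $t\mapsto t^\gamma If(t)$, so the cross term $(t^\gamma-s^\gamma)If(s)$ must also be estimated, and the prefactor your argument produces is $s^{-\gamma}(t-s)^{\alpha/2}$, which after multiplication by $t^\gamma$ is only bounded when $t\le 2s$ — the regime $t>2s$ has to be handled separately by the triangle inequality and the bound $t^{\gamma}\|If(t)\|_{L^p}\lesssim t^{\alpha/2}\lesssim (t-s)^{\alpha/2}$. With these repairs the proof is complete and agrees with the cited one.
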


For the remainder of this subsection we fix $\alpha \in (2/3,1)$.

\begin{definition}
  Let  $X \in \CC^\alpha_\infty$. We define the space $\DD^{\alpha}_{X}$ of distributions paracontrolled by $X$ as the set of
  all $(u,u^X,u^\sharp) \in C \CC^{0}_1 \times \LL^{\alpha/2,\alpha}_1 \times \LL^{{\alpha},2\alpha}_1$ such that
  \[
     u = u^X \mpara X + u^\sharp.
  \]
  For $T > 0$ we set $\CD^{\alpha}_{X} (T) = \CD^{\alpha}_{X} |_{[0, T]}$, and
  we define
  \[ \| u \|_{\DD^{\alpha}_{X} (T)} = \| u^X \|_{\LL^{\alpha/2,\alpha}_1(T)} + \| u^{\sharp} \|_{\LL_1^{\alpha,2 \alpha}(T)} . \]
  If $\tilde{X} \in \CC^\alpha_\infty$ and $(\tilde u,\tilde{u}^{\tilde X},\tilde{u}^\sharp) \in \DD^\alpha_{\tilde X}$, we write
  \[
     d_{\DD^\alpha(T)}(u, \tilde u) = \| u^X - \tilde{u}^{\tilde X} \|_{\LL^{\alpha/2,\alpha}_1(T)} + \| u^{\sharp} - \tilde{u}^\sharp \|_{\LL_1^{\alpha,2 \alpha}(T)}.
  \]
  Abusing notation, we will sometimes write $u \in \DD^\alpha_X$ rather than $(u,u^X,u^\sharp) \in \DD^\alpha_X$.
\end{definition}

For $(u,u^X, u^\sharp) \in \DD^\alpha_X$ we expand
\[
   u \xi = u \para \xi + u \lpara \xi + u^\sharp \reso \xi + (u^X \mpara X - u^X \para X) \reso \xi + C(u^X, X, \xi) + u^X (X \reso \xi),
\]
and given $\xi \in \CC^{\alpha-2}_\infty$, the right hand side is under control provided that we can bound $X \reso \xi$ in $\CC^{2\alpha-2}_\infty$. Moreover, in that case we have
\[
   u \xi - u \para \xi \in \mathcal{M}^{\alpha}\CC^{2\alpha-2}_1.
\]
If now $v$ denotes the solution to $\LL v = u \xi$, $v(0) = u^0$, then we make the paracontrolled ansatz $v = u \mpara X + v^\sharp$ and obtain
\[
   \LL v^\sharp = \LL v - \LL (u \mpara X) = u \xi - [\LL (u \mpara X) - u \mpara \LL X] + [u \mpara \LL X - u \para \xi].
\]
So if $\LL X - \xi \in \CC^{2\alpha-2}_\infty$ (and we will always take $X = \Delta^{-1}(\xi - (2\pi)^{-2} \CF \xi(0))$ for which $\LL X - \xi  = (2\pi)^{-2} \CF \xi(0) \in C^\infty$), then we can control the right hand side in $\mathcal{M}^{\alpha}\CC^{2\alpha-2}_1$, and since $v^\sharp(0) = u^0 - u(0) \mpara X \in \CC^0_1$, we get from the Schauder estimates that $v^\sharp \in \LL_1^{\alpha,2\alpha}$. This allows us to set up a Picard iteration in $\DD^\alpha_X(T)$ for a sufficiently small $T>0$ and to obtain a unique solution $u$ to our equation. Since the equation is linear, the length $T$ of the time interval does not depend on the initial condition, and iterating this construction we obtain a unique solution $u \in \DD^\alpha_X$ which is defined on all of $\R_+$ -- always under the assumption that $X \reso \xi \in \CC^{2\alpha-2}_\infty$ is given. In that case the solution also depends continuously on the data $(\xi, X, X\reso \xi, u^0)$, because all the operations on the right hand side of the equation are continuous.

\medskip

But note that in our setting we have $2\alpha-2 < 0$, which means that $X \reso \xi$ cannot be controlled using Bony's estimates (or other analytic tools), and we have to include it as an additional part of the data of the problem. Moreover, so far our entire analysis was pathwise and dimension independent, but now we want to use that $\xi$ is a space white noise in dimension 2 in order to use probabilistic estimates to bound $X \reso \xi$. And as it turns out is is not possible to directly make sense of this term. Rather we have to perform a Wick renormalization and consider
\[
   X \diamond \xi = X\reso \xi - \infty = \lim_{\delta \to 0} (\rho_\delta \ast X) \reso (\rho_\delta \ast \xi) - c_\delta,
\]
where $\rho_\delta = \delta^{-2} \rho(\delta^{-1} \cdot)$, $\rho$ is a mollifier, and $(c_\delta)$ a family of diverging constants such that
\[
   c_\delta = \frac{1}{2\pi} \log(\frac{1}{\delta}) + O(1)
\]
and only the finite contribution $O(1)$ depends on the specific mollifier $\rho$. Thus, we obtain the following result.
\begin{proposition}[see also Corollary~5.9 in~\cite{Gubinelli2013}]\label{prop:continuous pam}
   Let $\alpha \in (2/3,1)$ and let $(\xi, X, X\diamond \xi) \in \CC^{\alpha-2}_\infty \times \CC^{\alpha}_\infty \times \CC^{2\alpha-2}_\infty$ be such that $- \Delta X = \xi - (2\pi)^{-2} \CF \xi(0)$, and let $u^0 \in \CC^0_1$. Then there exists a unique solution $u \in \DD^\alpha_X$ to the equation
   \[
      \LL u = u\diamond \xi :=  u \para \xi + u \lpara \xi + u^\sharp \reso \xi + (u \mpara X - u \para X) \reso \xi + C(u, X, \xi) + u (X \diamond \xi), \qquad u(0) = u^0.
   \]
   Moreover, $u$ depends continuously on $(\xi, X, X \diamond \xi, u^0)$. If $X \diamond \xi = \lim_{\delta \to 0} (\rho_\delta \ast X) \reso (\rho_\delta \ast \xi) - c_\delta$, then $u = \lim_{\delta \to 0} u_\delta$, where
   \[
      \LL u_\delta = u_\delta (\rho_\delta \ast \xi) - u_\delta c_\delta,\qquad u_\delta(0) = u^0.
   \]
   If $d=2$ and $\xi$ is a space white noise, then almost surely all of the above conditions are satisfied, $X \diamond \xi$ can be chosen independently of the mollifier $\rho$, and we have
   \[
      c_\delta = (2\pi)^{-2} \sum_{k \in \Z^2 \setminus\{0\}} \frac{|\CF \rho(\delta k)|^2}{|k|^2} \simeq |\log \delta|.
   \]
\end{proposition}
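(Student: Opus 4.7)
The plan is to solve the abstract equation by a Picard iteration in $\DD^\alpha_X(T)$ for small $T>0$, following the heuristic sketched in the paragraph preceding the statement. Given an input $(u,u^X,u^\sharp)\in\DD^\alpha_X(T)$, I would define $\Gamma(u,u^X,u^\sharp)=(v,v^X,v^\sharp)$ by letting $v$ be the mild solution of the linear Cauchy problem $\LL v=u\diamond\xi$, $v(0)=u^0$, and then declaring $v^X:=u$ and $v^\sharp:=v-u\mpara X$. A fixed point of $\Gamma$ is exactly a paracontrolled solution, so everything reduces to showing that $\Gamma$ is a well-defined contraction on a suitable affine ball in $\DD^\alpha_X(T)$ and to iterating in time.

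The decisive a priori estimate is $u\diamond\xi-u\para\xi\in\mathcal{M}^\alpha_T\CC^{2\alpha-2}_1$, which follows term by term from Bony's bounds (Theorem~\ref{thm:paraproduct}), the commutator lemma, the modified-paraproduct estimates of Lemma~\ref{lem:modified paraproduct exp} and Corollary~\ref{cor:product}; the conditions $\alpha<1$ and $3\alpha-2>0$ that appear in the commutator lemma are precisely the reason for the assumption $\alpha\in(2/3,1)$. Combining this with the identity $\LL X=\xi-(2\pi)^{-2}\CF\xi(0)$ and the third bound of Lemma~\ref{lem:modified paraproduct exp} yields
\[
\LL v^\sharp=(u\diamond\xi-u\para\xi)-\bigl(\LL(u\mpara X)-u\mpara\xi\bigr)+\text{smooth remainder}\in\mathcal{M}^\alpha_T\CC^{2\alpha-2}_1,
\]
and since $(u\mpara X)(0)=0$ because $\operatorname{supp}\varphi\subset\R_+$, we have $v^\sharp(0)=u^0\in\CC^0_1$. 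The Schauder estimates of Lemma~\ref{lemma:schauder exp} then give $v^\sharp\in\LL^{\alpha,2\alpha}_1(T)$, with a small factor $T^\kappa$ gained on the nonlinear part of the bound. Linearity makes exactly the same estimates available for differences, so for $T$ small depending only on $(\xi,X,X\diamond\xi)$ and not on $u^0$, $\Gamma$ is a contraction; iterating on consecutive intervals of the same length produces a unique global solution $u\in\DD^\alpha_X$. Continuous dependence on $(\xi,X,X\diamond\xi,u^0)$ falls out of the same multilinear bounds.

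For the identification with mollified solutions I would observe that when $\xi_\delta=\rho_\delta\ast\xi$ is smooth, the classical product $u_\delta\xi_\delta$ admits a genuine pointwise Bony decomposition, and a direct computation, adding and subtracting $c_\delta u_\delta$, shows that the classical solution of $\LL u_\delta=u_\delta(\xi_\delta-c_\delta)$ sits in $\DD^\alpha_{X_\delta}$ and solves the paracontrolled equation with enhanced data $(\xi_\delta,X_\delta,X_\delta\reso\xi_\delta-c_\delta)$; the continuous dependence of the previous paragraph then gives $u_\delta\to u$ as $\delta\to 0$. Finally, for $d=2$ and $\xi$ a space white noise, $\xi\in\CC^{-1-\kappa}_\infty$ and $X=\Delta^{-1}(\xi-(2\pi)^{-2}\CF\xi(0))\in\CC^{1-\kappa}_\infty$ almost surely for every $\kappa>0$ by standard Kolmogorov-type arguments, so any $\alpha\in(2/3,1-\kappa)$ is admissible. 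The only genuinely probabilistic input, and the main obstacle here, is the convergence of $X_\delta\reso\xi_\delta-c_\delta$ in $L^p(\Omega;\CC^{2\alpha-2}_\infty)$ to a mollifier-independent limit $X\diamond\xi$: its mean is exactly the Fourier series $(2\pi)^{-2}\sum_{k\neq 0}|\CF\rho(\delta k)|^2/|k|^2$ by an explicit Parseval computation on the Green function, and the fluctuation lives in the second Wiener chaos, where hypercontractivity reduces convergence in $\CC^{2\alpha-2}_\infty$ to a Kolmogorov criterion on second moments, as carried out in~\cite{Gubinelli2013}.
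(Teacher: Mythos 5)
Your proposal follows essentially the same route as the paper: the fixed-point argument in $\DD^\alpha_X(T)$ built on the expansion of $u\diamond\xi$, the commutator and modified-paraproduct estimates (which force $\alpha\in(2/3,1)$), the Schauder estimates, linearity to iterate globally, and the deferral of the stochastic construction of $X\diamond\xi$ to the second-chaos/Kolmogorov argument of~\cite{Gubinelli2013}. The details you add (e.g.\ that $(u\mpara X)(0)=0$ because $\operatorname{supp}\varphi\subset\R_+$, and the identification of the mollified classical solution as a paracontrolled solution) are correct and consistent with the paper's sketch.
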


\subsection{Estimation of the discrete operators}

To extend the previous discussion to the lattice equation we will need to derive bounds on the discrete Laplacian and its semigroup, and also on the operator $\Pi_N$. Let us point out that all the estimates presented in this section have already been established in~\cite{Gubinelli2014KPZ}, Chapter~8, in the one dimensional setting and the extension to higher dimensions follows from the same arguments with only notational modifications which is why we omit most of the proofs. Throughout this subsection we fix $d=2$.

\paragraph{Estimates for the discrete Laplacian}

Recall that $\Delta_{\mathrm{rw}}^N \varphi(x) = \varepsilon^{-2} \int_{\Z^2} \varphi(x + \varepsilon j) \mu(\dd j)$ and let us write
\[ f (x) = \frac{\int_{\Z^2} e^{i \langle x, j \rangle} {\mu}
   (\mathd j)}{- | x |^2}, \]
so that
\[ \CF \Delta_{\mathrm{rw}}^N \varphi (k) = - | k |^2 f (k \varepsilon) \CF \varphi (k). \]

\begin{lemma}
   Under the hypothesis  (H$_{\mathrm{rw}}$) there exist a constant $c_f>0$ with $f(x)\geq c_f$ for all $x\in[-\pi,\pi]^2$. 
\end{lemma}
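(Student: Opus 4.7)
The plan is to exploit the structure provided by (H$_{\mathrm{rw}}$) to rewrite the numerator of $f$ in a manifestly non-negative form and then to extract quadratic behavior in $|x|$. First I would use that $\mu$ is radial (hence invariant under $j\mapsto -j$) together with $\int\mu(\dd j)=0$ to rewrite
\[
   \int_{\Z^2} e^{i\langle x,j\rangle}\,\mu(\dd j) = \sum_{j\neq 0}\mu(\{j\})\bigl(\cos\langle x,j\rangle-1\bigr) = -\sum_{j\neq 0}\mu(\{j\})\bigl(1-\cos\langle x,j\rangle\bigr).
\]
Since $\mu(\{j\})\ge 0$ for $j\neq 0$ and $1-\cos\langle x,j\rangle\ge 0$, the minus sign cancels the one in the denominator of $f$, so on $[-\pi,\pi]^2\setminus\{0\}$ the quantity $f(x)$ is at least $|x|^{-2}$ times any subsum with non-negative summands.

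Next I would isolate the nearest-neighbour contribution. Radial symmetry and $\mu(\{(0,1)\})>0$ give the common value $a:=\mu(\{(0,1)\})=\mu(\{(1,0)\})=\mu(\{(-1,0)\})=\mu(\{(0,-1)\})>0$, so the four nearest-neighbour sites contribute exactly $2a\bigl[(1-\cos x_1)+(1-\cos x_2)\bigr]$ to the numerator. Using the elementary inequality $1-\cos y\ge \tfrac{2}{\pi^{2}}y^{2}$ valid for $|y|\le\pi$, this is bounded below by $\tfrac{4a}{\pi^{2}}(x_{1}^{2}+x_{2}^{2})=\tfrac{4a}{\pi^{2}}|x|^{2}$. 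Dropping the remaining (non-negative) contributions and dividing by $|x|^{2}$ yields $f(x)\ge\tfrac{4a}{\pi^{2}}$ on $[-\pi,\pi]^{2}\setminus\{0\}$.

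It remains to handle the apparent singularity at $x=0$. Because $\mu$ has finite sixth (hence at least second) moments and satisfies $\int j_r\,\mu(\dd j)=0$, $\int j_1 j_2\,\mu(\dd j)=0$, and $\int j_r^{2}\,\mu(\dd j)=2$, a second-order Taylor expansion gives
\[
   \int_{\Z^{2}} e^{i\langle x,j\rangle}\mu(\dd j) = -\tfrac{1}{2}\bigl(2x_1^{2}+2x_2^{2}\bigr)+O(|x|^{3}) = -|x|^{2}+O(|x|^{3}),
\]
so $f$ extends continuously to $x=0$ with $f(0)=1$. Taking $c_f=\min\bigl(\tfrac{1}{2},\tfrac{4a}{\pi^{2}}\bigr)$ (say) therefore works on all of $[-\pi,\pi]^{2}$. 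I do not anticipate any real obstacle here; the only point to watch is making sure the positive nearest-neighbour weight guaranteed by (H$_{\mathrm{rw}}$) is what provides the quantitative lower bound, while the remaining terms of $\mu$ are simply discarded as non-negative.
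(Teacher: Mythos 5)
Your proof is correct and follows essentially the same route as the paper's: rewrite the numerator as $-\sum_{j\neq 0}\mu(\{j\})(1-\cos\langle x,j\rangle)$ using radiality and zero total mass, discard all but the nearest-neighbour terms, and bound $1-\cos y$ from below by a multiple of $y^2$ on $[-\pi,\pi]$ (the paper does this via $1-\cos y = 2\sin^2(y/2)$ and $|\sin y|\ge b|y|$, which is the same estimate). Your explicit treatment of the removable singularity at $x=0$ via the second-order Taylor expansion is a welcome extra detail that the paper delegates to a separately quoted lemma giving $f\in C^4_b$ with $f(0)=1$.
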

\begin{proof}
   Since the measure $\mu$ is radial we have
   \[
      \int_{\Z^2} e^{i \langle x, j \rangle} {\mu}(\mathd j) = \frac{1}{2} \int_{\Z^2} (e^{i \langle x, j \rangle} + e^{-i\langle x, j \rangle}) {\mu}(\mathd j) =  \int_{\Z^2} \cos( \langle x, j \rangle) {\mu}(\mathd j),
   \]
   and using that $\mu$ has total mass zero we get
   \[
      f(x) = \int_{\Z^2 \setminus \{0\}} \frac{1 - \cos(\langle x, j \rangle)}{|x|^2} \mu(\dd j).
   \]
   Now $\mu$ restricted to $\Z^2 \setminus \{0\}$ is a positive measure and the integrand is nonnegative. Moreover, $\mu$ is radial and therefore $\mu(\{(1,0)\}) = \mu(\{(0,1)\}) > 0$, which leads to
   \[
      f(x) \ge \frac{2 - \cos(x_1) - \cos(x_2)}{|x|^2} \mu(\{(0,1)\}) = \frac{\sin^2(x_1/2) + \sin^2(x_2/2)}{|x_1|^2+|x_2|^2} \mu(\{(0,1)\}).
   \]
   Now it suffices to note that for every $a\in (0,\pi)$ there exists $b>0$ with $|\sin(x)| \ge b |x|$ for all $x \in [-a,a]$.
\end{proof}

\begin{lemma}[\cite{Gubinelli2014KPZ}, Lemma~8.4]
  Let $\mu$ satisfy $(H_{\mathrm{rw}})$. Then the function
  \[ f (x) = - \frac{\int_{\R^2} e^{i \langle x, y \rangle} {\mu}
     (\mathd y)}{| x |^2} = \int_{\R^2} \frac{1 - \cos (\langle x, y
     \rangle)}{| \langle x, y \rangle |^2} \frac{| \langle x, y \rangle |^2}{|
     x |^2 | y |^2} | y |^2 {\mu} (\mathd y) \]
  is in $C^4_b$ and such that $f (0) = 1$.
\end{lemma}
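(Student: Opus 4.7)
The plan is to handle the three parts of the statement separately: identify the two displayed expressions for $f$, compute $f(0)$, and prove the $C^4_b$ regularity by exploiting the vanishing of enough Taylor coefficients of $\int e^{i\langle x,y\rangle}\mu(\dd y)$ at the origin.

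First I would verify the equality of the two formulas for $f$. Since $\mu$ is radial, it satisfies $\mu(\{j\}) = \mu(\{-j\})$, so $\int \sin\langle x, y\rangle\, \mu(\dd y) = 0$, and combining with $\int \mu = 0$ one obtains
\[
\int e^{i\langle x, y\rangle}\,\mu(\dd y) = \int \cos\langle x,y\rangle\,\mu(\dd y) = -\int (1 - \cos\langle x,y\rangle)\,\mu(\dd y).
\]
Dividing by $-|x|^2$ gives the first expression, and multiplying/dividing by $|\langle x,y\rangle|^2$ and $|y|^2$ gives the second (the atom at $y = 0$ contributes $0$ because $1-\cos 0 = 0$). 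For $f(0) = 1$, I would Taylor expand $1-\cos\langle x,y\rangle = \frac{1}{2}\langle x,y\rangle^2 + O(\langle x,y\rangle^4)$ and use the moment assumption $\int y_iy_j\,\mu(\dd y) = 2\delta_{ij}$ from $(H_{\mathrm{rw}})$ to get
\[
f(x) = \frac{1}{2|x|^2}\int \langle x,y\rangle^2\,\mu(\dd y) + O(|x|^2 M_4) = 1 + O(|x|^2 M_4),
\]
where $M_4 = \int |y|^4 |\mu|(\dd y) < \infty$, giving $f(0)=1$.

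For the regularity claim I would set $g(x) := \int e^{i\langle x,y\rangle}\mu(\dd y) = -|x|^2 f(x)$. The assumption of finite sixth moment allows us to differentiate under the integral sign, so $g \in C^6_b$ with $\|\partial^\alpha g\|_\infty \le \int |y|^{|\alpha|}\,|\mu|(\dd y)$ for all $|\alpha| \le 6$. The moment identities and the $y\mapsto -y$ symmetry imply $g(0) = 0$, $\nabla g(0) = 0$, $D^2 g(0) = -2\Id$, and $D^3 g(0) = D^5 g(0) = 0$. Applying Taylor's formula with integral remainder at order $6$,
\[
g(x) + |x|^2 = \tfrac{1}{24} D^4 g(0)[x]^4 + \tfrac{1}{5!}\int_0^1 (1-t)^5 D^6 g(tx)[x]^6\,\dd t,
\]
so that
\[
f(x) = 1 - \frac{D^4 g(0)[x]^4}{24|x|^2} - \frac{1}{5!\,|x|^2}\int_0^1 (1-t)^5 D^6 g(tx)[x]^6\,\dd t.
\]
The factor $[x]^6/|x|^2$ in the integrand equals a homogeneous polynomial of degree $4$ in $x$ with bounded, $C^6$ coefficients coming from $D^6 g(tx)$, so the remainder term lies in $C^4_b$ by a direct estimate in which one differentiates four times and uses that each differentiation leaves an $L^\infty$-bounded kernel (using the uniform bounds on up to six derivatives of $g$).

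The main obstacle is the fourth-order polynomial term $D^4 g(0)[x]^4/|x|^2$: in general a homogeneous polynomial of degree four in two variables need not be divisible by $|x|^2$. To handle this I would exploit the radial hypothesis, which in addition to killing odd moments also constrains the fourth moments, to show
\[
D^4 g(0)[x]^4 = \int \langle x,y\rangle^4\,\mu(\dd y) = c\,|x|^4
\]
for some constant $c$, so that the corrective term reduces to the smooth function $-c|x|^2/24$. Thus the full decomposition exhibits $f$ as a sum of $C^4_b$ functions, which is exactly the desired conclusion.
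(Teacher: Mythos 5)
Your verification of the two formulas and of $f(0)=1$ is correct, and you have isolated exactly the right difficulty: in two variables a quartic form need not be divisible by $|x|^2$. However, the step you propose to resolve this is false. Hypothesis (H$_{\mathrm{rw}}$) only asks that $j \mapsto \mu(\{j\})$ be radial as a function on the lattice, which yields invariance under the symmetries of the square but not under all rotations. The quartic forms invariant under that group form a two-dimensional space spanned by $x_1^4+x_2^4$ and $x_1^2x_2^2$, and writing $m_{\alpha}=\int y^{\alpha}\mu(\dd y)$ one only gets $\int \langle x,y\rangle^4 \mu(\dd y)= m_{(4,0)}(x_1^4+x_2^4)+6m_{(2,2)}x_1^2x_2^2$, which is a multiple of $|x|^4$ only if $m_{(4,0)}=3m_{(2,2)}$. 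The nearest-neighbour Laplacian, $\mu(\{(\pm1,0)\})=\mu(\{(0,\pm1)\})=1$, $\mu(\{0\})=-4$, satisfies every requirement of (H$_{\mathrm{rw}}$) and has $m_{(4,0)}=2$, $m_{(2,2)}=0$, so $\int\langle x,y\rangle^4\mu(\dd y)=2(x_1^4+x_2^4)$ is not proportional to $|x|^4$.

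This is not a gap you can close by another argument, because for that $\mu$ the conclusion itself fails: one finds $f(x)=1-\frac{x_1^4+x_2^4}{12|x|^2}+O(|x|^4)$ near the origin, and $\frac{x_1^4+x_2^4}{|x|^2}=|x|^2-\frac{2x_1^2x_2^2}{|x|^2}$ is homogeneous of degree $2$; were it $C^2$ at $0$ it would have to equal its quadratic Taylor polynomial by homogeneity, and $x_1^2x_2^2/|x|^2$ is not a polynomial (its values at $(1,\pm1)$ and $(1,0)$ are incompatible with any quadratic form). Hence $f\notin C^2(\R^2)$, let alone $C^4_b$. The cited Lemma~8.4 of \cite{Gubinelli2014KPZ} is one-dimensional, where $g(x)/x^2=\int_0^1(1-t)g''(tx)\,\dd t$ is harmless; dividing by $|x|^2$ in $d=2$ is not merely a notational modification, and the statement should be weakened to what actually holds and is used downstream ($f$ continuous with $f(0)=1$ and $|f(x)-1|\lesssim|x|^2$, $f\ge c_f>0$ on $[-\pi,\pi]^2$, smoothness away from the origin with homogeneous, Mikhlin-type derivative bounds). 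A secondary issue in your regularity argument: $D^6g(tx)[x]^6/|x|^2$ is not a polynomial in $x$ either --- each $x^{\alpha}/|x|^2$ with $|\alpha|=6$ is homogeneous of degree $4$ and only $C^3$ at the origin --- and putting derivatives on $D^6g(tx)$ would require moments of $\mu$ beyond the sixth, so even granting the quartic step your remainder analysis would deliver at most $C^3$ regularity.
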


\begin{lemma}[\cite{Gubinelli2014KPZ}, Lemma~8.10]\label{lem:discrete laplacian}
  Let ${\mu}$ satisfy $(H_{\mathrm{rw}})$, $\alpha <
  1$, $\beta \in \R$, $p \in [1,\infty]$ and let $\varphi \in \CC_p^{\alpha}$ and $\psi\in \CC^{\beta}_\infty$. Then for all $\delta \in [0, 1]$ and $N \in \N$
  \[ \| \Delta_{\mathrm{rw}}^N \varphi - \Delta \varphi \|_{\mathscr C^{\beta - 2 - \delta}_p} \lesssim N^{-
     \delta} \| \psi \|_{\mathscr C_p^\beta}. \]
\end{lemma}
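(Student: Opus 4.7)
The plan is to treat $\Delta_{\mathrm{rw}}^N - \Delta$ as a Fourier multiplier and apply Littlewood--Paley block estimates. From the identity $\CF(\Delta_{\mathrm{rw}}^N\varphi)(k) = -|k|^2 f(k\varepsilon)\CF\varphi(k)$ already used in the proof of the extension lemma, the symbol of interest is
\[
   m_N(k) = -|k|^2\bigl(f(k\varepsilon) - 1\bigr),\qquad k \in \Z^2,
\]
so everything reduces to a multiplier estimate on $m_N$. (Note: the statement as printed mentions an $\alpha$ and a $\psi$ that are not used; I take the conclusion to read $\|\Delta_{\mathrm{rw}}^N\varphi-\Delta\varphi\|_{\CC^{\beta-2-\delta}_p}\lesssim N^{-\delta}\|\varphi\|_{\CC^\beta_p}$.)

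First I would establish the pointwise bound $|f(x) - 1| \lesssim |x|^\delta$, uniformly in $x \in \R^2$ and for every $\delta \in [0,1]$. Setting $g(x) := \int_{\Z^2} e^{i\langle x,j\rangle}\mu(\dd j)$, the vanishing first moments and mixed second moment together with $\int j_r^2\mu(\dd j) = 2$ from $(H_{\mathrm{rw}})$ imply $g(0)=0$, $\nabla g(0)=0$ and $\tfrac12\mathrm{Hess}\,g(0) = -\mathrm{diag}(1,1)$, so $g(x) = -|x|^2 + O(|x|^4)$ near the origin and hence $|f(x) - 1| \lesssim |x|^2$ for $|x| \le 1$. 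For $|x|\ge 1$ the preceding lemma's boundedness of $f$ yields $|f(x)-1|\lesssim 1\lesssim|x|^\delta$. Analogous reasoning applied to the derivatives of $f$ (using $f\in C^4_b$) gives, on the annulus $|k|\sim 2^j$,
\[
   |\partial_k^\gamma m_N(k)| \lesssim N^{-\delta}\, 2^{j(2+\delta - |\gamma|)}\qquad \text{for all multi-indices } |\gamma|\le 4.
\]

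Second I would invoke the standard Mikhlin-type argument on a single Littlewood--Paley annulus: a symbol $n$ supported in $|\xi|\sim 2^j$ with $|\partial^\gamma n(\xi)|\lesssim A\cdot 2^{-j|\gamma|}$ for enough $\gamma$ has inverse Fourier transform of $L^1$ norm $\lesssim A$, so the corresponding convolution operator is bounded on $L^p$ uniformly in $p\in[1,\infty]$ by $A$. Applied to $\rho_j m_N$ with $A = N^{-\delta}2^{j(2+\delta)}$, Young's inequality yields
\[
   \|\Delta_j(\Delta_{\mathrm{rw}}^N - \Delta)\varphi\|_{L^p} \lesssim N^{-\delta}\, 2^{j(2+\delta)}\,\|\tilde\Delta_j\varphi\|_{L^p},
\]
with $\tilde\Delta_j$ a slight enlargement of $\Delta_j$ accommodating the support condition. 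Multiplying by $2^{j(\beta-2-\delta)}$ and taking the supremum over $j\ge -1$ produces the desired inequality.

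The main (mild) technical point is treating the two regimes $|k|\varepsilon \le 1$ and $|k|\varepsilon > 1$ uniformly, which is resolved by the interpolation $\min(|x|^2, 1)\lesssim |x|^\delta$ valid for all $\delta\in[0,1]$. As the authors emphasize, this is the direct analogue of Lemma~8.10 in~\cite{Gubinelli2014KPZ}, so no new ideas are required beyond bookkeeping the extra mixed partials present in the two-dimensional setting.
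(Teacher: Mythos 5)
Your argument is correct and is essentially the proof the paper points to: the paper itself omits the proof, deferring to Lemma~8.10 of the cited reference, which runs exactly along your lines (symbol $-|k|^2(f(\varepsilon k)-1)$, the bound $|f(x)-1|\lesssim\min(|x|^2,1)\lesssim|x|^{\delta}$ from the moment conditions and $f\in C^4_b$, then a Mikhlin/Bernstein estimate blockwise). You are also right that the printed statement has a typo and should read $\varphi\in\CC^{\beta}_p$ with $\|\varphi\|_{\CC^{\beta}_p}$ on the right-hand side.
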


While in general the semigroup generated by the discrete Laplacian $\Delta_{\mathrm{rw}}^N$
does not have good regularizing properties, we will only apply it to functions
with spectral support contained in $(- N/2, N/2)^2$ where it has the
same smoothing effect as the heat flow. It is here where we will use that  
$f (x) \geqslant c_f > 0$ for $| x |_{\infty} \leqslant \pi $.

\begin{lemma}[\cite{Gubinelli2014KPZ}, Lemma~8.11]
  Assume that ${\mu}$ satisfies$(H_{\mathrm{rw}})$. Let $\alpha \in \R$,
  $\beta \geqslant 0$, $p \in [1, \infty]$, and let $\varphi \in \CS'$ with
  $\tmop{supp} \left( \CF \varphi \right) \subset (- N/2, N/2)^2$.
  Then we have for all $T > 0$ uniformly in $t \in (0, T]$
  \begin{equation}
    \label{eq:discrete heat flow} \| e^{t \Delta_{\mathrm{rw}}^N} \varphi \|_{\mathscr C_p^{\alpha +
    \beta}} \lesssim t^{- \beta / 2} \| \varphi \|_{\mathscr C^{\alpha}_p} .
  \end{equation}
\end{lemma}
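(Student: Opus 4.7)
The plan is to realize $e^{t\Delta_{\mathrm{rw}}^N}$ as a Fourier multiplier with symbol $m_N^t(k) = e^{-t|k|^2 f(k\varepsilon)}\mathbf{1}_{|k|_\infty<N/2}$, then exploit the fact (from the previous lemma) that $f(k\varepsilon)\geq c_f>0$ whenever $|k\varepsilon|_\infty\le\pi$, which is exactly guaranteed by the spectral support assumption $\mathrm{supp}(\CF\varphi)\subset(-N/2,N/2)^2$. Thus the symbol is, up to a smooth cutoff, a genuine heat multiplier, and we should recover the standard Schauder-type gain.

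First I would dyadically localize: for each $j\ge -1$, let $\tilde\rho_j=\rho_{j-1}+\rho_j+\rho_{j+1}$ (so that $\rho_j\tilde\rho_j=\rho_j$) and write
\[
\Delta_j(e^{t\Delta_{\mathrm{rw}}^N}\varphi) = K_j^{t,N}*\tilde\Delta_j\varphi, \qquad \CF K_j^{t,N}(k)=\rho_j(k)\,e^{-t|k|^2 f(k\varepsilon)}\,\mathbf{1}_{|k|_\infty<N/2},
\]
using the spectral support of $\varphi$ to move the cutoff $\mathbf{1}_{|k|_\infty<N/2}$ into the definition of the kernel. By Young's inequality,
\[
\|\Delta_j(e^{t\Delta_{\mathrm{rw}}^N}\varphi)\|_{L^p}\le \|K_j^{t,N}\|_{L^1(\T^2)}\,\|\tilde\Delta_j\varphi\|_{L^p}\lesssim \|K_j^{t,N}\|_{L^1(\T^2)}\,2^{-j\alpha}\|\varphi\|_{\CC^\alpha_p}.
\]
Given this, multiplying by $2^{j(\alpha+\beta)}$ and using the elementary inequality $2^{j\beta}e^{-c t 2^{2j}}\lesssim t^{-\beta/2}$ (for $\beta\ge 0$ and $x\mapsto x^\beta e^{-cx^2}$ bounded on $\R_+$) reduces everything to proving the uniform kernel bound
\[
\|K_j^{t,N}\|_{L^1(\T^2)}\lesssim e^{-c t 2^{2j}}
\]
for some $c>0$ independent of $j$ and $N$.

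To establish this bound I would rescale: for $j$ with $2^{j+1}<N/2$ the cutoff is inactive on $\mathrm{supp}(\rho_j)$, so after the change of variables $k=2^j\ell$ the symbol becomes $\rho(\ell) e^{-t 2^{2j}|\ell|^2 f(2^j\ell\varepsilon)}$, supported in the fixed annulus $\{a\le|\ell|\le b\}$. Since $f\in C^4_b$ and $f\ge c_f$ on $[-\pi,\pi]^2$, the exponent is uniformly $C^4$-controlled on this annulus (with derivatives growing at most polynomially in $t2^{2j}$), and the pointwise bound $\leq e^{-c_f a^2 t 2^{2j}}$ is available. Four integrations by parts then give $|K_j^{t,N}(x)|\lesssim e^{-c t 2^{2j}}2^{2j}(1+2^j|x|)^{-4}$, whose $\T^2$ $L^1$ norm is $\lesssim e^{-c t 2^{2j}}$ uniformly in $N$.

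The main obstacle is handling the ``boundary'' scales $j$ with $2^{j+1}\gtrsim N/2$, where the cutoff truncates $\rho_j$ and destroys the smoothness that the integration by parts argument exploits. Here I would argue more crudely: on the support of $\rho_j(k)\mathbf{1}_{|k|_\infty<N/2}$ we still have $|k|\gtrsim 2^j$, so the symbol is bounded pointwise by $e^{-c t 2^{2j}}$; moreover the number of integer lattice points in this support is $\lesssim 2^{2j}$, hence by triangle inequality $\|K_j^{t,N}\|_{L^1(\T^2)}\lesssim \|K_j^{t,N}\|_{L^\infty(\T^2)}\lesssim 2^{2j}e^{-c t 2^{2j}}\lesssim e^{-c' t 2^{2j}}$ after absorbing the polynomial prefactor into a slightly smaller exponential rate. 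Combining the two regimes yields the required uniform kernel bound, and the estimate $\|e^{t\Delta_{\mathrm{rw}}^N}\varphi\|_{\mathscr C^{\alpha+\beta}_p}\lesssim t^{-\beta/2}\|\varphi\|_{\mathscr C^\alpha_p}$ follows by taking the supremum over $j$.
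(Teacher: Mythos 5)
Your overall strategy (realize the semigroup as a Fourier multiplier, localize dyadically, and reduce to the kernel bound $\|K_j^{t,N}\|_{L^1}\lesssim e^{-ct2^{2j}}$) is the standard one, and your treatment of the interior scales $2^j\ll N$ is correct: there the polynomial prefactors produced by differentiating the symbol are polynomials in $t2^{2j}$, so they really can be absorbed into a slightly weaker exponential. The paper itself does not reprove this lemma (it cites \cite{Gubinelli2014KPZ}), so the comparison is with that argument, which your interior case essentially reproduces.

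The boundary scales, however, contain a genuine gap. The inequality you invoke there, $2^{2j}e^{-ct2^{2j}}\lesssim e^{-c't2^{2j}}$, is false: the ratio is $2^{2j}e^{-(c-c')t2^{2j}}$, which for $t\sim 2^{-2j}$ is of order $2^{2j}$. The prefactor here is a polynomial in $2^{2j}$, not in $t2^{2j}$, and exactly in the regime $t2^{2j}=O(1)$ (which does occur, since $t$ ranges over all of $(0,T]$ independently of $N$) nothing kills it. Tracking this loss through, your boundary blocks only give $2^{j\beta}\|K_j^{t,N}\|_{L^1}\lesssim t^{-\beta/2-1}$, i.e.\ an estimate worse than the claim by a factor $t^{-1}$; already for $\beta=0$ your bound fails to show that the semigroup is uniformly bounded as $t\to0$. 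The loss is not an artifact of a lazy estimate that can be patched by interpolating $L^1$ between $L^2$ and $L^\infty$ (Cauchy--Schwarz still loses $2^j$): the true $L^1$ norm is $\sim e^{-ct2^{2j}}$ because the kernel concentrates on a set of measure $2^{-2j}$, and the crude sup-bound cannot see this. The clean fix is to not put the sharp cutoff $\1_{|k|_\infty<N/2}$ into the kernel at all. Since $\CF\varphi$ vanishes outside $(-N/2,N/2)^2$, you may freely replace $f$ in the symbol by any $\tilde f\in C^4_b(\R^2)$ with $\tilde f=f$ on $[-\pi,\pi]^2$ and $\tilde f\ge c_f/2$ on all of $\R^2$ (such an extension exists because $f\ge c_f$ on the closed box and $f\in C^4_b$). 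With $\tilde f$ the symbol $\rho_j(k)e^{-t|k|^2\tilde f(k\varepsilon)}$ is smooth on the whole annulus with uniform Gaussian decay, your integration-by-parts argument applies at every scale $2^j\lesssim N$ (and $\Delta_j(e^{t\Delta_{\mathrm{rw}}^N}\varphi)=0$ for $2^j\gtrsim N$ by the spectral support assumption, so no other scales occur), and the boundary regime disappears entirely.
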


An interpolation argument allows to extend~(\ref{eq:discrete heat flow}) to
$L^{p}$, so that $\| e^{t \Delta_{\mathrm{rw}}^N} \varphi \|_{L^{p}} \lesssim t^{-
\alpha / 2} \| \varphi \|_{- \alpha}$ for all $\alpha > 0$ and all $\varphi$
with spectral support in $(- N/2, N/2)^2$.

\begin{corollary}[\cite{Gubinelli2014KPZ}, Lemma~8.12]
  Let ${\mu}$ satisfy $(H_{\mathrm{rw}})$. Let $\alpha \in (0, 2)$ and $\varphi \in
  \CC_p^{\alpha}$ with spectral support in $(- N/2, N/2)^2$. Then
  \[ \| (e^{t \Delta_{\mathrm{rw}}^N} - \tmop{id}) \varphi \|_{L^{p}} \lesssim
     t^{\alpha / 2} \| \varphi \|_{\mathscr C_p^\alpha} . \]
\end{corollary}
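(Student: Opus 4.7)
The plan is to apply the fundamental theorem of calculus in order to write
\[
   e^{t\Delta_{\mathrm{rw}}^N}\varphi - \varphi = \int_0^t \Delta_{\mathrm{rw}}^N e^{s\Delta_{\mathrm{rw}}^N}\varphi\, \mathd s,
\]
which is legitimate because $\varphi$ has finite spectral support, so the identity reduces to finitely many scalar ODEs in Fourier space. Taking $L^p$ norms inside the integral then reduces the claim to the pointwise-in-$s$ estimate
\[
   \|\Delta_{\mathrm{rw}}^N e^{s\Delta_{\mathrm{rw}}^N}\varphi\|_{L^p} \lesssim s^{(\alpha-2)/2}\,\|\varphi\|_{\CC_p^\alpha},
\]
whose integration over $s \in [0,t]$ yields the desired $t^{\alpha/2}$ factor; this is exactly where the assumption $\alpha \in (0,2)$ enters, in order to ensure that $s^{(\alpha-2)/2}$ is integrable at zero.

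To establish this pointwise estimate I would split the semigroup as $e^{s\Delta_{\mathrm{rw}}^N} = e^{(s/2)\Delta_{\mathrm{rw}}^N} \circ e^{(s/2)\Delta_{\mathrm{rw}}^N}$, observe that $e^{(s/2)\Delta_{\mathrm{rw}}^N}\varphi$ and hence $\Delta_{\mathrm{rw}}^N e^{(s/2)\Delta_{\mathrm{rw}}^N}\varphi$ retain their spectral support in $(-N/2, N/2)^2$, and apply the $L^p$-extension of~\eqref{eq:discrete heat flow} noted immediately after that lemma to the outer semigroup, with regularity parameter $2-\alpha \in (0,2)$, giving
\[
   \|\Delta_{\mathrm{rw}}^N e^{s\Delta_{\mathrm{rw}}^N}\varphi\|_{L^p} \lesssim s^{(\alpha-2)/2}\,\|\Delta_{\mathrm{rw}}^N e^{(s/2)\Delta_{\mathrm{rw}}^N}\varphi\|_{\CC_p^{\alpha-2}}.
\]
The remaining Besov norm I would then control by combining~\eqref{eq:discrete heat flow} with $\beta = 0$ and the boundedness of the Fourier multiplier $\Delta_{\mathrm{rw}}^N \colon \CC_p^\alpha \to \CC_p^{\alpha-2}$ uniformly in $N$,
\[
   \|\Delta_{\mathrm{rw}}^N e^{(s/2)\Delta_{\mathrm{rw}}^N}\varphi\|_{\CC_p^{\alpha-2}} \lesssim \|e^{(s/2)\Delta_{\mathrm{rw}}^N}\varphi\|_{\CC_p^\alpha} \lesssim \|\varphi\|_{\CC_p^\alpha},
\]
which completes the pointwise bound.

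The only step that needs careful verification is the uniform-in-$N$ boundedness of $\Delta_{\mathrm{rw}}^N \colon \CC_p^\alpha \to \CC_p^{\alpha-2}$, since the $L^p$-extension of the smoothing bound is already provided and everything else is elementary. This operator is a Fourier multiplier with symbol $-|k|^2 f(\varepsilon k)$, and by the preceding lemma $f \in C^4_b$ with bounds independent of $\varepsilon$. On a Littlewood-Paley annulus $\{|k| \sim 2^j\}$, factoring out $2^{2j}$ leaves a smooth symbol whose derivatives are bounded uniformly in $j$ and $\varepsilon$, so a blockwise Bernstein/Mikhlin-type multiplier argument delivers the desired bound with constants independent of $N$.
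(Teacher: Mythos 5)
Your argument is correct. Note that the paper does not actually prove this corollary — it is cited from \cite{Gubinelli2014KPZ}, Lemma~8.12, where the bound is obtained blockwise: one shows $\| (e^{t \Delta_{\mathrm{rw}}^N} - \tmop{id}) \Delta_j \varphi \|_{L^p} \lesssim \min\{1, t 2^{2j}\} \| \Delta_j \varphi\|_{L^p}$ (the second alternative again by the fundamental theorem of calculus on the block) and then sums over $j$, picking up $\sum_j \min\{1, t2^{2j}\} 2^{-j\alpha} \lesssim t^{\alpha/2}$. Your version runs the same FTC mechanism globally and replaces the blockwise interpolation by the semigroup smoothing estimate with regularity gain $2-\alpha$; the two are equivalent in substance, and your reduction correctly isolates where $\alpha \in (0,2)$ is used (integrability of $s^{(\alpha-2)/2}$ at $0$ and positivity of $2-\alpha$). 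Two small remarks: the semigroup split is not really needed, since $\Delta_{\mathrm{rw}}^N$ commutes with $e^{s\Delta_{\mathrm{rw}}^N}$ and one can bound $\|e^{s\Delta_{\mathrm{rw}}^N} \Delta_{\mathrm{rw}}^N \varphi\|_{L^p} \lesssim s^{(\alpha-2)/2} \|\Delta_{\mathrm{rw}}^N \varphi\|_{\CC^{\alpha-2}_p}$ directly; and in your multiplier argument the derivatives of $x \mapsto f(\varepsilon 2^j x)$ are only uniformly bounded for $2^j \lesssim \varepsilon^{-1}$, which is exactly what the spectral support hypothesis guarantees, so it is worth saying explicitly that only those blocks contribute.
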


Combining these estimates, we can apply the same arguments as in the
continuous setting to derive analogous Schauder estimates for $(e^{t
\Delta_{\mathrm{rw}}^N})$ as in Lemma~\ref{lemma:schauder exp} -- of course always restricted to elements of $\CS'$ that are
spectrally supported in $(- N/2, N/2)^2$.

\paragraph{Fourier shuffle operator}

Let us introduce the operator $\PC_N u = \CF^{-1}(\1_{(-N/2,N/2)^2} \CF u)$, for which we have the following estimate.

\begin{lemma}[\cite{Gubinelli2014KPZ}, Lemma~8.7]\label{lem:cutoff bound}
  Let $\alpha \geqslant 0$, $p \in [1,\infty]$ and $\varphi \in \CC^{\alpha}_p$. Then for any $\delta
  \geqslant 0$
  \[
     \| \PC_N \varphi - \varphi \|_{\CC^{\alpha - \delta}_p} \lesssim N^{- \delta} (\log N)^2 \| \varphi  \|_{\CC^\alpha_p} .
  \]
\end{lemma}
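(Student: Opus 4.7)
The plan is to analyze $\PC_N \varphi - \varphi$ block-by-block in the Littlewood--Paley decomposition. Since $\PC_N$ is the Fourier multiplier with symbol $\1_{(-N/2,N/2)^2}$, it commutes with each $\Delta_j$, so
\[
\Delta_j(\PC_N \varphi - \varphi) = -(\Id - \PC_N)\Delta_j \varphi
\]
has Fourier support in $\tmop{supp}(\rho_j) \cap \{k \in \Z^2 : |k|_\infty \ge N/2\}$. I would split the estimate according to the relative position of the dyadic annulus $\tmop{supp}(\rho_j)$ and the cube $(-N/2,N/2)^2$.

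When $2^j b < N/2$ the annulus is contained in the cube, the intersection above is empty, and the block vanishes. When the annulus is entirely disjoint from the cube (roughly $2^j \gtrsim N$), one has $\PC_N \Delta_j \varphi = 0$, so
\[
2^{j(\alpha - \delta)} \|\Delta_j(\PC_N \varphi - \varphi)\|_{L^p} = 2^{j(\alpha - \delta)} \|\Delta_j \varphi\|_{L^p} \le 2^{-j\delta}\|\varphi\|_{\CC^\alpha_p} \lesssim N^{-\delta}\|\varphi\|_{\CC^\alpha_p},
\]
which already gives the desired decay with no logarithmic loss.

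The delicate regime is the intermediate one, where $2^j \simeq N$ and the annulus straddles the boundary of the cube; only $O(1)$ many values of $j$ fall in this range. For these blocks I would use the crude bound
\[
\|(\Id - \PC_N)\Delta_j \varphi\|_{L^p} \le \bigl(1 + \|\PC_N\|_{L^p \to L^p}\bigr) \|\Delta_j \varphi\|_{L^p},
\]
and control the operator norm of $\PC_N$ via the factorization $\1_{(-N/2,N/2)^2}(k) = \1_{|k_1| < N/2} \1_{|k_2| < N/2}$ together with Young's inequality: with $D_N$ the one-dimensional periodic Dirichlet kernel,
\[
\|\PC_N\|_{L^p \to L^p} \le \|\CF^{-1} \1_{(-N/2,N/2)^2}\|_{L^1(\T^2)} \lesssim \|D_N\|_{L^1(\T)}^2 \lesssim (\log N)^2,
\]
where the logarithmic $L^1$-growth of $D_N$ is classical. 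This produces the additional $(\log N)^2$ factor in the intermediate range, which combined with the previous estimate on $\|\Delta_j \varphi\|_{L^p}$ yields the bound $N^{-\delta}(\log N)^2 \|\varphi\|_{\CC^\alpha_p}$ for each intermediate block.

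Taking the supremum over $j$ of $2^{j(\alpha - \delta)} \|\Delta_j(\PC_N \varphi - \varphi)\|_{L^p}$, the intermediate regime dominates and the asserted estimate follows. The main obstacle is precisely the operator-norm bound on $\PC_N$: for $p \in (1, \infty)$ it would follow from Hilbert-transform boundedness at no logarithmic cost, but for $p \in \{1, \infty\}$ one must pay the $(\log N)^2$ price associated with a sharp rectangular Fourier truncation in two dimensions, and it is this payment that produces the logarithmic factor in the statement.
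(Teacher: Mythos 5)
Your proof is correct, and it is essentially the argument behind the cited Lemma~8.7 of \cite{Gubinelli2014KPZ} (which this paper does not reproduce, deferring to the one-dimensional version): the blocks with $2^j$ well below $N$ vanish, those well above $N$ give $2^{-j\delta}\lesssim N^{-\delta}$ for free, and the $O(1)$ boundary blocks cost the squared Lebesgue constant $\|D_N\|_{L^1(\T)}^2\lesssim(\log N)^2$ of the two-dimensional rectangular Dirichlet kernel via Young's inequality. Your closing remark correctly identifies why the logarithm is only needed for the endpoint cases $p\in\{1,\infty\}$; note also that your argument never uses $\alpha\ge 0$, so it in fact proves the statement for all $\alpha\in\R$.
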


As a consequence we can bound the operator $\Pi_N$:

\begin{lemma}[\cite{Gubinelli2014KPZ}, Lemma~8.8]\label{lem:periodic cutoff bound}
  Let $\alpha \geqslant 0$, $p \in [1,\infty]$ and $\varphi \in \CC^{\alpha}_p$. Then for any $\delta
  \geqslant 0$
  \[ \| \Pi_N \varphi - \varphi \|_{\CC^{\alpha - \delta}_p} \lesssim N^{- \delta} (\log N)^2 \| \varphi
     \|_{\CC^\alpha_p} . \]
  If $\tmop{supp} ( \CF \varphi ) \subset [- c N, c N]^2$ for some $c \in
  (0, 1)$, then this inequality extends to general $\alpha \in \R$.
\end{lemma}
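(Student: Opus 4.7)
The plan is to reduce the bound to the one-dimensional analogue, Lemma~8.8 of~\cite{Gubinelli2014KPZ}. I would first use the telescoping identity
\[
\Pi_N \varphi - \varphi = (\Pi_N - \PC_N)\varphi + (\PC_N - I)\varphi,
\]
control the second summand directly by Lemma~\ref{lem:cutoff bound}, and rewrite the first summand using the identity $\Pi_N \PC_N = \PC_N$ (which holds because $\Pi_N$ acts as the identity on frequencies in $(-N/2, N/2)^2$) as $\Pi_N(I - \PC_N)\varphi$. This reduces the problem to estimating $\Pi_N$ applied to a distribution spectrally supported in $\{|k|_\infty \geq N/2\}$.

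The key structural observation I would exploit is the tensor product factorization $\Pi_N = \Pi_N^{(1)} \Pi_N^{(2)}$, where $\Pi_N^{(r)}$ is the one-dimensional aliasing operator acting in the $r$-th coordinate and as the identity in the other. With this I would write
\[
\Pi_N - I = (\Pi_N^{(1)} - I)\Pi_N^{(2)} + (\Pi_N^{(2)} - I)
\]
and apply the one-dimensional bound of the reference coordinatewise, which gives
\[
\|(\Pi_N^{(r)} - I) v\|_{\CC^{\alpha-\delta}_p} \lesssim N^{-\delta} (\log N) \|v\|_{\CC^\alpha_p}.
\]
Taking $\delta = 0$ together with the triangle inequality yields $\|\Pi_N^{(2)}\varphi\|_{\CC^\alpha_p} \lesssim (\log N) \|\varphi\|_{\CC^\alpha_p}$, and combining the two estimates produces the claimed $N^{-\delta}(\log N)^2$ factor.

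The main technical obstacle is the sharp spectral cutoff inherent in each $\Pi_N^{(r)}$: sharp Fourier multipliers fail to be bounded on $L^1$ and $L^\infty$, which is the ultimate source of the logarithmic loss per coordinate. In the one-dimensional argument of the reference this is handled by decomposing $v = \sum_m \Delta_m v$ via Littlewood-Paley and using the aliasing identity
\[
\Pi_N^{(r)} \Delta_m v(x) = \sum_{j \in \Z} e^{-i j N x_r} T^{(r)}_{N,j} \Delta_m v(x)
\]
with $T^{(r)}_{N,j}$ the sharp projection onto $\{|k_r - jN| < N/2\}$; only $O(2^m/N)$ integers $j$ contribute, and Bernstein's inequality absorbs the logarithmic loss into the dyadic scale. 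The tensor product then compounds this into $(\log N)^2$ in two dimensions.

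Finally, when $\tmop{supp}(\CF\varphi) \subset [-cN, cN]^2$ for some $c < 1$, only finitely many Littlewood-Paley blocks of $\varphi$ are nontrivial, so the dyadic tail---the only place where the restriction $\alpha \geq 0$ enters, to ensure convergence of the resulting geometric sum---truncates, and the same tensor product argument yields the bound for all $\alpha \in \R$.
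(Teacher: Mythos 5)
You should first note that the paper does not actually prove this lemma: it cites the one--dimensional Lemma~8.8 of~\cite{Gubinelli2014KPZ} together with the blanket remark that the two--dimensional versions follow ``from the same arguments with only notational modifications''. Measured against that, your structural reductions are reasonable and go further than the paper does. The tensor factorization $\Pi_N=\Pi_N^{(1)}\Pi_N^{(2)}$ is correct (the aliasing map $k\mapsto k^N$ acts coordinatewise), the telescoping through $\PC_N$ together with $\Pi_N\PC_N=\PC_N$ is fine, the Dirichlet kernel is indeed the source of each factor $\log N$, and your explanation of why compact spectral support removes the restriction $\alpha\ge 0$ is the right one. One caveat: the cited lemma is a statement about one--dimensional Besov norms, so the ``coordinatewise'' bound $\|(\Pi_N^{(r)}-I)v\|_{\CC^{\alpha-\delta}_p(\T^2)}\lesssim N^{-\delta}(\log N)\|v\|_{\CC^\alpha_p(\T^2)}$ does not literally follow from it --- the output of $\Pi_N^{(r)}\Delta_m v$ spreads over all two--dimensional dyadic scales $2^n\lesssim 2^m$, so the argument has to be rerun against the 2-d blocks. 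That is the harmless ``notational modification''.

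The genuine gap is quantitative and sits exactly in the mechanism you describe for the one--dimensional estimate. Bounding $\Pi_N^{(r)}\Delta_m v=\sum_j e^{-ijNx_r}T^{(r)}_{N,j}\Delta_m v$ by the triangle inequality over the $O(2^m/N)$ contributing $j$, each sharp projection costing $\log N$ on $L^p$, yields $\|(\Pi_N^{(r)}-I)\Delta_m v\|_{L^p}\lesssim (2^m/N)(\log N)\|\Delta_m v\|_{L^p}$, and for $p<\infty$ the factor $2^m/N$ is not an artifact of the triangle inequality: taking $v=e^{il_0Nx_1}F_{2^{m-2}}(x_1)F_K(x_2)$ with Fej\'er kernels $F$, $K<N/2$ and $l_0N\sim 2^m$, one has $\|v\|_{L^1}=1$ while $\CF(\Pi_N^{(1)}v)(0,0)\sim 2^m/N$, so $\|\Pi_N^{(1)}v\|_{L^1}\gtrsim 2^m/N$. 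Resumming $(2^m/N)(\log N)2^{-m\alpha}$ over the scales $2^m\gtrsim N$ where the difference is nonzero then requires $\alpha>1$, whereas the lemma is stated for $\alpha\ge 0$ and is invoked in the paper with $p=1$ and $\alpha<1$; Bernstein's inequality can absorb the $\log N$, not the $2^m/N$. To close the argument you need either to restrict to $p=\infty$, where the lattice--sampling representation $\Pi_N^{(r)}g(x)=\varepsilon\sum_{y\in\varepsilon\Z_N}D_N(x_r-y)g(\dots)$ with $\varepsilon\sum_y|D_N(x_r-y)|\lesssim\log N$ gives a bound uniform in the frequency scale, or to use that in every application in this paper $\varphi$ is a (para/resonant) product of functions spectrally supported in $(-N/2,N/2)^2$, hence $\mathrm{supp}\,\CF\varphi\subset(-N,N)^2$ and only the $j\in\{-1,0,1\}^2$ aliasing copies contribute --- but that is precisely the ``compact spectral support'' half of the statement, not the general $\alpha\ge 0$, $p\in[1,\infty]$ half your sketch claims to prove.
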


\begin{remark}\label{rmk:PiN on product}
  There exists $c \in (0, 1)$, independent of $N$, such that if $\tmop{supp}
  \left( \CF \psi \right) \subset (- N / 2, N / 2)^2$, then $\tmop{supp} \left( \CF
  (\varphi \para \psi) \right) \subset [- c N, c N]^2$. This means that we can always
  bound $\Pi_N (\varphi \para \psi) - \varphi \para \psi$, even if the paraproduct has negative
  regularity. On the other side the best statement we can make about the
  resonant product is that if $\varphi$ and $\psi$ are both spectrally supported in $(-
  N / 2, N / 2)^2$, then $\tmop{supp} \left( \CF (\varphi \reso \psi) \right) \subset (-N, N)^2$. A simple consequence is that if $\alpha + \beta > 0$, $\varphi \in
  \CC^{\alpha}_p$, $\psi \in \CC^{\beta}_\infty$, and $\mathrm{supp}( \CF \varphi) \cup \mathrm{supp}( \CF \psi) \subset (- N / 2, N / 2)^2$, then
  \[
     \| \Pi_N (\varphi\psi) - \varphi\psi \|_{\CC_p^{\alpha \wedge \beta - \delta}} \lesssim N^{- \delta} (\log N)^2 \| \varphi \|_{\CC^\alpha_p} \| \psi \|_{\CC^\beta_\infty} .
  \]
\end{remark}

Finally we need to commute $\Delta_{\mathrm{rw}}^N$ with $\Pi_N$, which in general is not possible but in our setting can be done by relying on the discrete structure that is implicit in the background.

\begin{lemma}\label{lem:shuffle laplacian commutator}
  Let $\alpha < 1$, $\beta \in \R$, $p \in [1,\infty]$ and let $\varphi \in \CC^{\alpha}_p$, $\psi
  \in \CC^{\beta}_\infty$ have spectral support in $(- N / 2, N / 2)^2$. Then for all $\delta>0$
  \[ \| \Delta_{\mathrm{rw}}^N \Pi_N  (\varphi \para \psi) - \Pi_N (\varphi \para \Delta_{\mathrm{rw}}^N \psi)
     \|_{\CC^{\alpha + \beta - 2 - \delta}_p} \lesssim \| \varphi \|_{\CC^\alpha_p} \| \psi
     \|_{\CC^\beta_\infty}. \]
\end{lemma}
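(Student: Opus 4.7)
The plan is to first observe that $\Delta_{\mathrm{rw}}^N$ and $\Pi_N$ commute. Indeed, $\Delta_{\mathrm{rw}}^N$ is the Fourier multiplier with symbol $\tilde m(k) = \varepsilon^{-2}\int_{\Z^2} e^{i\langle \varepsilon k, j\rangle}\mu(\dd j)$, which is $N$-periodic in each coordinate of $k$ since $\varepsilon N = 2\pi$ and $j \in \Z^2$; in particular $\tilde m(k^N) = \tilde m(k)$. On the Fourier side $\Pi_N$ is exactly the aliasing $k \mapsto k^N$ applied to Fourier coefficients, so a direct computation of Fourier coefficients shows $\Delta_{\mathrm{rw}}^N \Pi_N = \Pi_N \Delta_{\mathrm{rw}}^N$. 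Therefore the quantity to bound equals $\Pi_N\bigl[\Delta_{\mathrm{rw}}^N(\varphi \para \psi) - \varphi \para \Delta_{\mathrm{rw}}^N \psi\bigr]$.

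Next I would expand the inner commutator using a discrete Leibniz-type identity. Using that $\int \mu = 0$, one verifies for smooth $f, g$ that
\[
\Delta_{\mathrm{rw}}^N(fg) \;=\; f\,\Delta_{\mathrm{rw}}^N g \;+\; g\,\Delta_{\mathrm{rw}}^N f \;+\; R(f,g),
\]
where $R(f,g)(x) = \varepsilon^{-2}\int_{\Z^2}[f(x+\varepsilon j) - f(x)][g(x+\varepsilon j) - g(x)]\mu(\dd j)$. Applying this with $f = \Delta_{\le q-2}\varphi$ and $g = \Delta_q \psi$, summing over $q$, and using that $\Delta_{\mathrm{rw}}^N$ commutes with all Littlewood-Paley blocks yields
\[
\Delta_{\mathrm{rw}}^N(\varphi \para \psi) - \varphi \para (\Delta_{\mathrm{rw}}^N \psi) \;=\; (\Delta_{\mathrm{rw}}^N \varphi) \para \psi \;+\; \sum_q R_q,
\]
with $R_q = R(\Delta_{\le q-2}\varphi, \Delta_q \psi)$.

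The first summand is handled by Bony's estimate: since $|\tilde m(k)| \lesssim |k|^2$ uniformly in $N$ (because $f$ is bounded), $\Delta_{\mathrm{rw}}^N$ maps $\CC^\alpha_p$ to $\CC^{\alpha-2}_p$ uniformly in $N$, so~\eqref{eq:para-2} gives $\|(\Delta_{\mathrm{rw}}^N\varphi) \para \psi\|_{\CC^{\alpha+\beta-2}_p} \lesssim \|\varphi\|_{\CC^\alpha_p}\|\psi\|_{\CC^\beta_\infty}$. For each $R_q$, the mean value inequality $|f(x+\varepsilon j) - f(x)| \le \varepsilon |j| \|\nabla f\|_{L^\infty}$ combined with the Bernstein bounds $\|\nabla \Delta_{\le q-2}\varphi\|_{L^p} \lesssim 2^{q(1-\alpha)}\|\varphi\|_{\CC^\alpha_p}$ (where summability of the telescoping sum over $i \le q-2$ is where the hypothesis $\alpha < 1$ enters) and $\|\nabla \Delta_q \psi\|_{L^\infty} \lesssim 2^{q(1-\beta)}\|\psi\|_{\CC^\beta_\infty}$, together with the second moment bound $\int |j|^2 |\mu|(\dd j) < \infty$, give $\|R_q\|_{L^p} \lesssim 2^{q(2-\alpha-\beta)}\|\varphi\|_{\CC^\alpha_p}\|\psi\|_{\CC^\beta_\infty}$. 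Since each $R_q$ has spectral support in an annulus of scale $2^q$, summing in $q$ yields $\|\sum_q R_q\|_{\CC^{\alpha+\beta-2}_p} \lesssim \|\varphi\|_{\CC^\alpha_p}\|\psi\|_{\CC^\beta_\infty}$.

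To conclude, both $(\Delta_{\mathrm{rw}}^N\varphi)\para\psi$ and $\sum_q R_q$ inherit spectral support in $[-cN, cN]^2$ for some fixed $c \in (0,1)$ by Remark~\ref{rmk:PiN on product} (applied to $\psi$ respectively $\Delta_{\mathrm{rw}}^N\psi$, both of which are spectrally supported in $(-N/2, N/2)^2$). The extended version of Lemma~\ref{lem:periodic cutoff bound} then controls $\Pi_N - \Id$ from $\CC^{\alpha+\beta-2}_p$ to $\CC^{\alpha+\beta-2-\delta}_p$ with operator norm $\lesssim N^{-\delta}(\log N)^2 \lesssim 1$, which absorbs the $\delta$-loss and yields the claim. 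The main subtle point is the opening commutation identity $\Delta_{\mathrm{rw}}^N \Pi_N = \Pi_N \Delta_{\mathrm{rw}}^N$, which relies precisely on the matching $\varepsilon N = 2\pi$ between lattice spacing and period: without it one could not separate the outer $\Pi_N$ from the commutator and the discrete Leibniz decomposition would not be available.
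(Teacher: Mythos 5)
Your proof is correct and follows essentially the same route as the paper's: the same discrete Leibniz identity $\Delta_{\mathrm{rw}}^N(fg)=f\Delta_{\mathrm{rw}}^Ng+g\Delta_{\mathrm{rw}}^Nf+R(f,g)$ applied blockwise to the paraproduct, the same three resulting terms, and the same use of the second moment of $\mu$ and of Remark~\ref{rmk:PiN on product} to absorb $\Pi_N$. The only cosmetic differences are that you justify moving $\Pi_N$ past $\Delta_{\mathrm{rw}}^N$ via the $N$-periodicity of its symbol whereas the paper passes through the lattice functions $g=\CE_N\tilde g$, and that you re-derive the bound on the remainder block by block with Bernstein instead of invoking Bony's estimate~\eqref{eq:para-2} on $(\varphi(\cdot+\varepsilon j)-\varphi)\para(\psi(\cdot+\varepsilon j)-\psi)$; both computations are equivalent.
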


\begin{proof}
  If $g$ and $h$ have spectral support in $(- N / 2, N / 2)^2$, there are two
  unique lattice functions $\tilde{g}$ and $\tilde{h}$ such that $g
  =\mathcal{E}_N \tilde{g}$ and $h =\mathcal{E}_N \tilde{h}$ and therefore
  \[
     \Delta_{\mathrm{rw}}^N \Pi_N  (g h) = \Delta_{\mathrm{rw}}^N \Pi_N  (\mathcal{E}_N \tilde{g}
     \mathcal{E}_N \tilde{h}) = \Delta_{\mathrm{rw}}^N \mathcal{E}_N  (\tilde{g}
     \tilde{h}) =\mathcal{E}_N \Delta_{\mathrm{rw}}^N  (\tilde{g} \tilde{h}),
  \]
  and on the other side a direct computation shows that
  \[
    \Delta_{\mathrm{rw}}^N  (\tilde{g} \tilde{h}) =  (\Delta_{\mathrm{rw}}^N  \tilde{g}) \tilde{h}
    +  \tilde{g} \Delta_{\mathrm{rw}}^N  \tilde{h} + \varepsilon^{- 2} \int 
    (\tilde{g}(\cdot + \varepsilon j) - \tilde{g})(\tilde{h}(\cdot + \varepsilon j) - \tilde{h}) \mu (\mathd j).
  \]
  We apply this with $g = \Delta_{\le k-2} \varphi$ and $h = \Delta_k \psi$ and sum over $k$ to obtain
  \[
     \Delta_{\mathrm{rw}}^N \Pi_N  (\varphi \para \psi) = \Pi_N ((\Delta_{\mathrm{rw}}^N \varphi) \para \psi)
    + \Pi_N(\varphi \para \Delta_{\mathrm{rw}}^N  \psi) + \varepsilon^{- 2} \int 
    \Pi_N[(\varphi(\cdot + \varepsilon j) - \varphi) \para(\psi(\cdot + \varepsilon j) - \psi) ] \mu (\mathd j).
  \]
  Combining Lemma~\ref{lem:discrete laplacian} and Remark~\ref{rmk:PiN on product} we have
  \[
     \| \Pi_N ((\Delta_{\mathrm{rw}}^N \varphi) \para \psi)\|_{\CC^{\alpha + \beta - 2 - \delta}_p} \lesssim \| \Delta_{\mathrm{rw}}^N \varphi \|_{\CC^{\alpha - 2}_p} \| \psi \|_{\CC^\beta_\infty} \lesssim \| \varphi \|_{\CC^{\alpha}_p} \| \psi \|_{\CC^\beta_\infty},
  \]
  while the integral can be bounded by
  \begin{align*}
     &\Big\| \varepsilon^{- 2} \int \Pi_N[(\varphi(\cdot + \varepsilon j) - \varphi) \para(\psi(\cdot + \varepsilon j) - \psi) ] \mu (\mathd j) \Big\|_{\CC^{\alpha + \beta - 2 - \delta}_p} \\
     &\hspace{60pt} \lesssim \varepsilon^{- 2} \int \| \varphi(\cdot + \varepsilon j) - \varphi \|_{\CC^{\alpha-1}_p} \| \psi(\cdot + \varepsilon j) - \psi \|_{\CC^{\beta-1}_\infty} |\mu| (\mathd j) \\
     &\hspace{60pt} \lesssim \int |j|^2 |\mu| (\mathd j) \| \varphi \|_{\CC^\alpha_p} \|\psi \|_{\CC^\beta_\infty} \lesssim \| \varphi \|_{\CC^\alpha_p} \|\psi \|_{\CC^\beta_\infty}.
  \end{align*}
  This concludes the proof.
\end{proof}

\subsection{Paracontrolled ansatz}

Let now $u_N \in C (\R_+, C^{\infty} (\T^2))$ solve
\[ \LL_N u_N = \Pi_N( u_N  \xi_N) - c_N u_N, \hspace{2em} u_N (0) = u_N^0, \]
where we wrote
\[
   \LL_N = \partial_t - \Delta_{\mathrm{rw}}^N.
\]
Here $\xi_N$ and $u_N^0$ are deterministic and fixed, and we assume that they both have spectral support in $(-N/2,N/2)^2$. To lighten the notation, in this subsection we shall omit the subscript $N$
when no confusion arises, writing for example $u, \xi, u_0$ instead of
$u_N, \xi_N, u^0_N$. Note that existence and uniqueness of $u_N$ pose no problem, because we are only working with finitely many Fourier modes and therefore our PDE is actually a linear ODE.

Let us start by making the following ansatz for $u$:
\begin{equation}\label{eq:discrete paracontrolled ansatz}
   u = \Pi_N (u^X \mpara X) + u^{\sharp},
\end{equation}
where $(u,u^X,u^\sharp) \in C \CC^{0}_1 \times \LL^{\alpha/2,\alpha}_1 \times \LL^{{(\alpha+\beta)/2},\alpha+\beta}_1$  for some $\alpha \in (2 / 3, 1-2/p)$ and $\beta \in (2 - 2 \alpha, \alpha)$, and
\[ X = \int_0^{\infty} P^N_t  (\xi - (2\pi)^{-2} \CF \xi(0)) \mathd t \]
with $(P^N_t)_{t \geqslant 0}$ denoting the heat flow generated by $\Delta_{\mathrm{rw}}^N$. Using this ansatz, we get
\begin{align*}
   \LL_N u & = \LL_N  \Pi_N (u^X \mpara X) + \LL_N u^{\sharp} = \Pi_N (u \xi) - c_N u  \\
   & = \Pi_N (u \para \xi) + \Pi_N (u \lpara \xi) + \Pi_N (u \reso \xi) - c_N u ,
\end{align*}
and therefore
\begin{align}\label{eq:LN usharp} \nonumber
   \LL_N u^{\sharp} & = \Pi_N \left\{(u \para \xi) -  (u^X \mpara \LL_N X) + (u \lpara \xi) + (u \reso \xi) - c_N u \right\} \\
   &\quad + \{ \Pi_N (u^X \mpara \LL_N X) - \LL_N  \Pi_N (u^X \mpara  X)\},
\end{align}
where we used that $\Pi_N u = u$ because $u$ has spectral support in $(-N/2, N/2)^2$. Now recall that $\beta < \alpha$ and therefore Lemmas~\ref{lem:shuffle laplacian commutator},~\ref{lem:modified paraproduct exp} and~\ref{lem:periodic cutoff bound} show that
\[
   \| \Pi_N (u^X \mpara \LL_N X) - \LL_N  \Pi_N (u^X \mpara  X) \|_{\mathcal M^{\alpha/2}_T \CC_1^{\alpha+\beta-2}} \lesssim \| u^X \|_{\LL^{\alpha/2,\alpha}_p(T)} \| X \|_{\CC^\alpha_\infty}.
\]
Moreover, $\LL_N X = \xi - (2\pi)^{-2} \CF \xi(0)$ and setting $u^X = u$ we have by Lemma~\ref{lem:modified paraproduct exp} and Remark~\ref{rmk:PiN on product}
\[
   \| \Pi_N( (u \para \xi) - (u^X \mpara \LL_N X) + u \lpara \xi) \|_{\mathcal M^{\alpha/2}_T \CC_1^{\alpha + \beta -2}} \lesssim \| u \|_{\mathcal{M}^{\alpha/2}_T \CC^\alpha_1 } \| \xi \|_{\CC^{\alpha-2}_\infty}.
\]
Now we plug in the paracontrolled ansatz for $u$ and obtain $u \reso \xi = (\Pi_N (u \mpara X)) \reso \xi + u^{\sharp} \reso \xi$, and by Lemma~\ref{lem:periodic cutoff bound}
\[
   \| \Pi_N (u^\sharp \reso \xi) \|_{\mathcal M^{(\alpha+\beta-\delta)/2}_T \CC^{2\alpha+\beta -2 \delta}_1} \lesssim \| u^\sharp \|_{\mathcal M_T^{(\alpha+\beta-\delta)/2} \CC^{\alpha+\beta-\delta}_1} \|\xi\|_{\CC^{\alpha-2}_\infty}.
\]
as long as $\delta>0$ is small enough so that $2\alpha+\beta-2\delta > 2$. Applying Lemma~\ref{lem:modified paraproduct exp} and twice Lemma~\ref{lem:periodic cutoff bound} we can also replace $\Pi_N ( (\Pi_N (u \mpara X)) \reso \xi)$ with $\Pi_N ( (\Pi_N (u \para X)) \reso \xi)$, so that it remains to control $\Pi_N ( (\Pi_N (u \para X) \reso \xi) - c_N u)$. So far we only reproduced the calculations of Section~\ref{sec:paracontrolled}. But now we cannot simply continue in the same way because we do not have a good enough control of $\Pi_N$,
and in particular it is not true that $\Pi_N (u \mpara X)$ is paracontrolled by $X$ (at least not allowing for uniform bounds in $N$). In~{\cite{Gubinelli2014KPZ}} an approach was developed to tackle this problem and it turns out to be sufficient to control a certain random operator: Set
\[ C_N (u, X, \xi) = (\Pi_N (u \para  X)) \reso  \xi - u ( X
   \reso  \xi) \]
and
\begin{align}\label{eq:random operator}\nonumber
   A_N (u) & = \Pi_N (C_N (u, X, \xi) - C (u, X, \xi) )= \Pi_N ((\Pi_N (u \para X)) \reso \xi - (u \para X) \reso \xi) \\
   & = \Pi_N((( \Pi_N - 1) (u \para X)) \reso \xi).
\end{align}
Then we can expand
\[
   \Pi_N ( (\Pi_N (u \para X) \reso \xi) - c_N u) = A_N(u) + \Pi_N (u (X \reso \xi - c_N)),
\]
and the second term on the right hand side can be controlled using Lemma~\ref{lem:periodic cutoff bound} by
\[
   \| \Pi_N (u (X \reso \xi - c_N))\|_{\mathcal M_T^{\alpha/2} \CC^{\alpha + \beta - 2}_1} \lesssim \| u \|_{\mathcal M^\alpha_T \CC_1^\alpha} \| X \reso \xi - c_N \|_{\CC^{2\alpha-2}_\infty}.
\]
So if we assume that $A_N$ is a bounded linear operator from $\CC_1^\alpha$ to $\CC_1^{2\alpha-2}$, then all the terms on the right hand side of~\eqref{eq:LN usharp} are under control and from here it is straightforward to show the convergence of $u_N$ to the solution $u$ of Proposition~\ref{prop:continuous pam} as long as $(u_0^N, \xi_N, X_N \reso \xi_N - c_N, A_N) \Rightarrow (u_0, \xi, X\diamond \xi, 0)$ in $\CC_1^0 \times \CC^\alpha_\infty \times \CC^{2\alpha-2}_\infty \times L(\CC_1^\alpha, \CC_1^{2\alpha-2})$, where $L(X,Y)$ denotes the space of bounded linear operators from $X$ to $Y$; see~\cite{Gubinelli2013, Gubinelli2014KPZ} for similar arguments.

\begin{proposition}\label{prop:deterministic discrete limit}
   Assume that $(u_0^N, \xi_N, X_N \reso \xi_N - c_N, A_N)$ converges to $(u_0, \xi, X\diamond \xi, 0)$ in $\CC_1^0 \times \CC^\alpha_\infty \times \CC^{2\alpha-2}_\infty \times L(\CC_1^\alpha, \CC_1^{2\alpha-2})$. Then the solution $u_N$ to
   \[
      \LL_N u_N = \Pi_N( u_N  \xi_N) - c_N u_N, \hspace{2em} u_N (0) = u_N^0,
   \]
   converges in $C([0,T], \CC^0_1)$ to the solution $u$ of
   \[
      \LL u = u \diamond \xi, \qquad u(0) = u_0.
   \]
\end{proposition}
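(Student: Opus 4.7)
The plan is to follow the paracontrolled scheme already outlined in this subsection, closing the Picard iteration uniformly in $N$ and then deducing convergence by continuity of the solution map on the data. I would proceed in three steps.

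First, I would set up the paracontrolled ansatz $u_N = \Pi_N(u_N \mpara X_N) + u_N^\sharp$ with $u_N^X = u_N$, so that $u_N^\sharp$ solves
\[
   \LL_N u_N^\sharp = F_N(u_N, u_N^\sharp), \qquad u_N^\sharp(0) = u_N^0 - \Pi_N(u_N^0 \mpara X_N),
\]
where $F_N$ is the sum of: the two commutator contributions from \eqref{eq:LN usharp} (involving $\LL_N$ against $\Pi_N(\cdot \mpara X_N)$ and $\para$ against $\mpara$); the left paraproduct $\Pi_N(u_N \lpara \xi_N)$; the remainder resonance $\Pi_N(u_N^\sharp \reso \xi_N)$; the random operator contribution $A_N(u_N)$; and the renormalized product $\Pi_N(u_N(X_N \reso \xi_N - c_N))$. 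Each of these has already been bounded above in $\mathcal{M}^{\gamma}_T \CC^{\alpha+\beta-2}_1$ (for the appropriate $\gamma$) by a product of $\|u_N\|_{\mathcal{M}^{\alpha/2}_T \CC^\alpha_1}$ and $\|u_N^\sharp\|_{\LL^{(\alpha+\beta)/2,\alpha+\beta}_1(T)}$ with norms of the data, uniformly in $N$. Combined with the discrete Schauder estimate obtained from \eqref{eq:discrete heat flow} (valid because the outer $\Pi_N$'s preserve spectral support in $(-N/2,N/2)^2$, on which $\Delta_{\mathrm{rw}}^N$ generates a genuinely regularizing flow), these estimates turn the coupled system for $(u_N, u_N^\sharp)$ into a contraction in $\DD^\alpha_{X_N}(T^*)$ for a time $T^*>0$ depending only on the norms of $(u_0^N, \xi_N, X_N \reso \xi_N - c_N, A_N)$. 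Linearity of the equation permits iteration on intervals of the fixed length $T^*$, yielding a uniform-in-$N$ bound $\sup_N \|u_N\|_{\DD^\alpha_{X_N}(T)} < \infty$ on any compact time interval.

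Second, I would pass to the limit. The continuous solution $u$ of Proposition~\ref{prop:continuous pam} admits the analogous paracontrolled decomposition $u = u \mpara X + u^\sharp$ and satisfies an equation of the same structure with $\LL$, the identity in place of $\Pi_N$, the enhancement $X \diamond \xi$ in place of $X_N \reso \xi_N - c_N$, and with $A_N$ replaced by $0$. Write the paracontrolled equation for the difference $(u_N - u, u_N^\sharp - u^\sharp)$: the right-hand side splits into (i) the bilinear operations of $F$ applied to the differences of the data, which converge to zero by hypothesis, and (ii) the same operations applied to the converging object but with $\Pi_N - \Id$, $\Delta_{\mathrm{rw}}^N - \Delta$ producing the difference, which converge to zero by Lemmas~\ref{lem:discrete laplacian}, \ref{lem:cutoff bound}, and \ref{lem:periodic cutoff bound} against the uniformly bounded $u_N$. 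A Gronwall-type estimate in $\DD^\alpha_{X_N}(T)$, reusing the contractivity of step one, then gives convergence of $u_N$ to $u$ in $C([0,T], \CC^0_1)$.

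The main obstacle is the resonant contribution $\Pi_N((\Pi_N(u_N \mpara X_N)) \reso \xi_N)$ appearing in $\Pi_N(u_N \reso \xi_N)$: the inner $\Pi_N$ destroys the paracontrolled structure and prevents the continuous commutator estimate from giving a uniform-in-$N$ bound. The decomposition via $A_N$ in \eqref{eq:random operator} isolates this obstruction into a single linear operator acting on $u_N$, and the assumed convergence $\|A_N\|_{L(\CC^\alpha_1, \CC^{2\alpha-2}_1)} \to 0$ is exactly what is needed to close the Picard argument uniformly in $N$ and to identify the limit with the continuous paracontrolled solution.
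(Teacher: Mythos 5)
Your proposal is correct and follows essentially the same route as the paper: the paper's ``proof'' of this proposition is precisely the discussion preceding it in Section~3.3 (the ansatz $u_N=\Pi_N(u_N\mpara X_N)+u_N^\sharp$, the term-by-term bounds on \eqref{eq:LN usharp} via the commutator, paraproduct and cutoff lemmas, the isolation of the obstruction into $A_N$, and then a Picard/continuity argument referred to \cite{Gubinelli2013, Gubinelli2014KPZ}). You have merely spelled out the fixed-point and limit-passage steps that the paper leaves as ``straightforward'', and you correctly identify the role of the $A_N\to 0$ hypothesis.
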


\section{Convergence of the potential}\label{sec:martingales}

To complete the proof of Theorem~\ref{thm:main}, it remains to show that the conditions of Proposition~\ref{prop:deterministic discrete limit} are satisfied under our assumptions (H$_{\mathrm{rw}}$), (H$_{\mathrm{mart}}$) and (H$_{\mathrm{init}}$). This will be achieved in this section, which can be seen as the main technical contribution of the paper, with the help of multiple stochastic integrals.

\subsection{Martingale central limit theorem and convergence to the white noise}

The potential is given by $\xi_N= \varepsilon^{-1} \mathcal{E}_N \eta_N (\cdot / \varepsilon)$, and therefore
\[
   \CF \xi_N (k) =\1_{| k |_{\infty} < N / 2} \varepsilon^{-1} \CF_{\T_N} \eta_N (k)  = \1_{| k |_{\infty} < N / 2} \varepsilon \sum_{| \ell |_{\infty} < N / 2} e^{- i \langle k, \varepsilon \ell  \rangle} \eta_N (\ell) .
\]
To prove the convergence of $\xi_N$ to the white noise $\xi$ in distribution
in $\CS'$, it suffices to show that
\[
   \left( \CF \xi_N (k_1), \ldots, \CF \xi_N (k_m) \right) \longrightarrow \left( \CF \xi (k_1), \ldots, \CF \xi (k_m) \right)
\]
in distribution in $\C^m$, for all $(k_1, \ldots k_m) \in
\R^m$. Using the Cram{\'e}r-Wold theorem we can restrict
ourselves to studying the convergence of linear combinations of the Fourier modes, which are of the form $\varepsilon \sum_{| \ell |_{\infty} < N / 2} (\varphi (\varepsilon \ell) + i \psi (\varepsilon \ell)) \eta_N (\ell)$ for suitable real valued, smooth, and bounded functions $\varphi, \psi$.
Applying the Cram{\'e}r-Wold theorem once more, we see that it suffices to study the
convergence of
\[
   S_N = \varepsilon \sum_{| \ell |_{\infty} < N / 2} \varphi (\varepsilon \ell) \eta_N (\ell) = \varepsilon \sum_{k=0}^{N^2-1} \varphi (\varepsilon \zeta(k)) \eta_N(\zeta(k)),
\]
where we recall that $\zeta \colon \{0,\dots, N^2-1\} \to (-N/2,N/2)^2$ is the enumeration under which $\eta_N$ is a martingale. Observe that under (H$_{\mathrm{mart}}$) we have
\begin{align*}
   \lim_{N \to \infty} \sum_{k=0}^{N^2-1} \E[|\varepsilon \varphi (\varepsilon \zeta(k)) \eta_N(\zeta(k))|^2 | \eta_N(\zeta(0), \dots, \eta_N(\zeta(k-1))] & = \lim_{N \to \infty} \varepsilon^2 \sum_{k=0}^{N^2-1} \varphi^2(\varepsilon \zeta(k)) \\
   & = \int_{\T^2} \varphi^2(x) \dd x.
\end{align*}
So by the martingale central limit theorem, \cite{Brown1971}, Theorem~1, it follows that $(S_N)$ converges in distribution to a centered normal variable with variance $\int_{\T^2} \varphi^2(x) \dd x$ provided that we can show
\[
   \lim_{N\to \infty}\sum_{k=0}^{N^2-1} \E[| \varepsilon \varphi (\varepsilon \zeta(k)) \eta_N (\zeta(k))|^2 \1_{|\varepsilon \varphi (\varepsilon \zeta(k)) \eta_N (\zeta(k))| > \delta}] = 0
\]
for all $\delta > 0$. But since by assumption (H$_{\mathrm{mart}}$) the fourth moment of $\eta_N(\ell)$ is uniformly bounded in $N$ and $\ell$, this convergence is easily shown by an application of the Cauchy-Schwarz inequality and the dominated convergence theorem. In conclusion, we have shown the following result.

\begin{lemma}\label{lem:white noise convergence}
   Assume that $(\eta_N (k) : k \in  (-N/2,N/2)^2)$ satisfies (H$_{\mathrm{mart}}$). Then
   \[
     \xi_N (x) = \varepsilon (2 \pi)^{- 2} \sum_{|
     k |_{\infty}, | \ell |_{\infty} < N / 2} e^{i \langle k, x - \varepsilon \ell \rangle} \eta_N (\ell), \qquad x \in \T^2,
  \]
  converges in distribution in $\CS' (\T^2)$ to the white noise on $\T^2$.
\end{lemma}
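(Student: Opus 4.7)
The plan is to reduce the functional convergence to convergence of finite-dimensional distributions and then to apply the martingale central limit theorem mode by mode. Since $\CS'(\T^2)$ is the dual of a nuclear Fréchet space, convergence in distribution in $\CS'(\T^2)$ follows from convergence of $(\langle \xi_N, \varphi_1\rangle, \dots, \langle \xi_N, \varphi_m\rangle)$ in $\R^m$ for all finite collections of test functions $\varphi_1, \dots, \varphi_m \in \CS(\T^2)$, together with the identification of the limit as a white noise (which is characterized by centered Gaussian finite-dimensional distributions with $\E[\langle \xi, \varphi\rangle \langle \xi, \psi\rangle] = \langle \varphi, \psi\rangle_{L^2}$). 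Alternatively, as indicated in the excerpt, one may test against complex exponentials to obtain joint convergence of finitely many Fourier modes $(\CF \xi_N(k_1), \dots, \CF \xi_N(k_m))$ and apply the Cramér--Wold device.

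By a second application of Cramér--Wold, the problem reduces to showing that for every real-valued $\varphi \in C^\infty(\T^2)$ the sum
\[
   S_N = \varepsilon \sum_{|\ell|_\infty < N/2} \varphi(\varepsilon \ell) \eta_N(\ell) = \sum_{k=0}^{N^2-1} X_{N,k}, \qquad X_{N,k} := \varepsilon \varphi(\varepsilon \zeta(k)) \eta_N(\zeta(k)),
\]
converges in distribution to $\mathcal{N}(0, \|\varphi\|_{L^2(\T^2)}^2)$, where $\zeta$ is the enumeration provided by (H$_{\mathrm{mart}}$). The key observation is that $(X_{N,k})_{k=0}^{N^2-1}$ is a martingale difference array with respect to the filtration $\CF_{N,k} = \sigma(\eta_N(\zeta(0)), \dots, \eta_N(\zeta(k)))$, because (H$_{\mathrm{mart}}$) ensures $\E[\eta_N(\zeta(k)) \mid \CF_{N,k-1}] = 0$.

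The plan is then to apply Brown's martingale CLT (\cite{Brown1971}, Theorem~1), for which two conditions must be verified:

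(i) Convergence of the conditional variance:
\[
   V_N := \sum_{k=0}^{N^2-1} \E[X_{N,k}^2 \mid \CF_{N,k-1}] = \varepsilon^2 \sum_{k=0}^{N^2-1} \varphi^2(\varepsilon \zeta(k)) \longrightarrow \int_{\T^2} \varphi^2(x)\, \dd x,
\]
where the equality uses $\E[\eta_N(\zeta(k))^2 \mid \CF_{N,k-1}] = 1$ from (H$_{\mathrm{mart}}$), and the limit is a Riemann sum convergence (here $\varepsilon = 2\pi/N$ and the lattice $(\varepsilon \zeta(k))_k$ is just an enumeration of $\T_N^2$, so no ordering issue arises).

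(ii) Lindeberg's condition: for all $\delta > 0$,
\[
   L_N(\delta) := \sum_{k=0}^{N^2-1} \E[X_{N,k}^2 \1_{|X_{N,k}| > \delta}] \longrightarrow 0.
\]
This is the only step requiring a genuine estimate, but it is standard: by Cauchy--Schwarz,
\[
   L_N(\delta) \le \sum_{k=0}^{N^2-1} \E[X_{N,k}^4]^{1/2} \,\P(|X_{N,k}| > \delta)^{1/2} \lesssim \sum_{k=0}^{N^2-1} \varepsilon^2 \, \varepsilon^2 / \delta^2 \lesssim \varepsilon^2 \to 0,
\]
using that $\E[\eta_N(\zeta(k))^4] \le M$ uniformly by (H$_{\mathrm{mart}}$) (we only need $p=4$, and $p>6$ is assumed, so this is comfortable) and Chebyshev to bound the probability. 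Neither step presents any real obstacle; the main conceptual point, which has already been made in the excerpt, is simply that (H$_{\mathrm{mart}}$) provides an enumeration turning $\eta_N$ into a martingale difference sequence so that Brown's theorem applies, replacing any independence assumption.
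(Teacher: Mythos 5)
Your proof is correct and follows essentially the same route as the paper's: reduction to finitely many Fourier modes via the Cram\'er--Wold device, identification of the martingale difference array coming from (H$_{\mathrm{mart}}$), and Brown's martingale CLT with the conditional variance computed as a Riemann sum and the Lindeberg condition checked by Cauchy--Schwarz. (One harmless arithmetic slip: Chebyshev gives $\P(|X_{N,k}|>\delta)^{1/2}\lesssim \varepsilon/\delta$ rather than $\varepsilon^2/\delta^2$, so the sum is $\lesssim \varepsilon/\delta$, which still vanishes.)
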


\begin{remark}
   Of course, the analogous statement holds in $\T^d$ for any $d$.
\end{remark}

\subsection{Multiple stochastic integrals and tightness in Besov spaces}

To derive tightness estimates for the area term $X_N \diamond \xi_N$ it will be useful to rewrite it as a second order stochastic integral with respect to $(\eta_N)$, which is an idea that was inspired by~{\cite{Mourrat2014}}, Lemma~4.1. For the general discussion of multiple stochastic integrals we will take our index set to be $\N$ rather than $(-N/2,N/2)^2$ in order to facilitate the presentation.

Let $(\eta (k) : k = 0, 1, \ldots)$ be a sequence of martingale differences, let $n \in \N$ and let $f \in \ell^2
(\N^n)$ with $f (k_1, \ldots, k_n) = 0$ whenever $k_i = k_j$ for some
$i \neq j$. Then we define
\[ I_n (f) = \sum_{k_1, \ldots, k_n \in \N} f (k_1, \ldots, k_n) \eta
   (k_1) \cdot \ldots \cdot \eta (k_n) . \]
By definition we have $I_n (f) = I_n (\tilde{f})$, where
\[ \tilde{f} (k_1, \ldots, k_n) = \frac{1}{n!} \sum_{\sigma \in \mathcal{S}_n}
   f (\sigma (k_1), \ldots, \sigma (k_n)), \]
is the symmetrization of $f$ with $\mathcal{S}_n$ denoting the group of permutations of $\{ 1, \ldots, n
\}$. Moreover,
\[ I_n (\tilde{f}) = n! \sum_{k_1 < \ldots < k_n} \tilde{f} (k_1, \ldots, k_n)
   \eta (k_1) \cdot \ldots \cdot \eta (k_n) . \]
This representation is nice, because now $I_n (\tilde{f})$ is given as a sum of martingale
increments: we have
\[ I_n (\tilde{f}) = n! \sum_{k_n} I_{n - 1} (\tilde{f} (\cdot |k_n)) \eta
   (k_n), \]
with
\[ \tilde{f} (\cdot |k_n) (k_1, \ldots, k_{n - 1}) = \tilde{f} (k_1, \ldots,
   k_n) \]
whenever $k_1 < \ldots < k_{n - 1} < k_n$, and 0 otherwise. Therefore, $I_n
(\tilde{f})$ is a martingale transform of $\left( \sum_{k \le \cdot}
\eta (k) \right)$.

\begin{proposition}\label{prop:hypercontractivity}
   Let $p \ge 2$, $n \in \N$ and $M>0$ and let $(\eta (k) : k = 0, 1, \ldots)$ be a sequence of martingale differences with
   \[
      \E[|\eta(k)|^{p}|\eta(0), \dots, \eta(k-1)] \le M
   \]
   for all $k$. Then we have for any $f \in \ell^2(\N^n)$ with $f (k_1, \ldots, k_n) = 0$ whenever $k_i = k_j$ for some
$i \neq j$
  \[
     \| I_n (f) \|_{L^p (\Omega)}^p =\E [| I_n (f) |^p] \lesssim \Big( \sum_{k_1, \ldots, k_n} | f (k_1, \ldots, k_n) |^2 \Big)^{p / 2} M^n = \| f \|_{\ell^2 (\N^n)}^p M^n.
  \]
\end{proposition}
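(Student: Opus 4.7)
The plan is to induct on $n$, using the Burkholder-Davis-Gundy (BDG) inequality for discrete martingales together with Minkowski's inequality in $L^{p/2}$ (valid because $p \ge 2$) and the conditional moment hypothesis on $\eta$.

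First I would reduce to symmetric, finitely supported $f$: since $I_n(f) = I_n(\tilde f)$ with $\|\tilde f\|_{\ell^2} \le \|f\|_{\ell^2}$, the symmetric case suffices, and the general $\ell^2$ case follows by truncation and an $L^2$-limit once the bound is established for finitely supported data. Write $\mathcal F_k = \sigma(\eta(0), \ldots, \eta(k))$. The base case $n=1$ is then the classical Burkholder inequality applied to the martingale $\sum_k f(k)\eta(k)$, combined with Minkowski in $L^{p/2}$ applied to the quadratic variation and the conditional bound $\E[|\eta(k)|^p \mid \mathcal F_{k-1}] \le M$.

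The main step is the inductive one. I would use the representation recorded in the excerpt,
\[
   I_n(f) = n! \sum_{k_n} I_{n-1}(f(\cdot|k_n)) \, \eta(k_n),
\]
and observe that because $f(\cdot|k_n)$ is supported on tuples whose largest coordinate is $k_n$, the coefficient $I_{n-1}(f(\cdot|k_n))$ is $\mathcal F_{k_n - 1}$-measurable. Hence this sum is a genuine martingale transform, and BDG followed by the triangle inequality in $L^{p/2}$ reduces the problem to bounding
\[
   \Bigl( \sum_{k_n} \| I_{n-1}(f(\cdot|k_n)) \, \eta(k_n) \|_{L^p}^2 \Bigr)^{p/2}.
\]
Conditioning on $\mathcal F_{k_n - 1}$ spends one power of $M$, the induction hypothesis applied to $I_{n-1}(f(\cdot|k_n))$ spends $M^{n-1}$ and produces $\|f(\cdot|k_n)\|_{\ell^2}^p$, and after raising to the power $2/p$ and summing, the orthogonality-style identity $\sum_{k_n} \|f(\cdot|k_n)\|_{\ell^2}^2 \le \|f\|_{\ell^2}^2$ yields the right-hand side $M^{2n/p} \|f\|_{\ell^2}^2$. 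Taking the outer $p/2$ power closes the induction with exactly $M^n \|f\|_{\ell^2}^p$.

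I do not expect a real conceptual obstacle: everything reduces to applying BDG, Minkowski and conditioning in the correct order. The two points that require care are (i) the symmetrization is indispensable, as it is what turns $I_n$ into a martingale transform in the single last variable $k_n$ with a predictable coefficient $I_{n-1}(f(\cdot|k_n))$, and (ii) the exponents must be balanced so that the power of $M$ comes out to exactly $M^n$, neither $M^{n-1}$ nor $M^{n+1}$. The implicit constant will depend on $p$ and $n$ through the Burkholder constants and the factor $(n!)^p$ produced in the recursion, but the dependence on $M$ and $\|f\|_{\ell^2}$ is as stated in the proposition.
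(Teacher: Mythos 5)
Your proposal is correct and follows essentially the same route as the paper's proof: induction on $n$ via the martingale-transform representation $I_n(\tilde f)=n!\sum_{k_n}I_{n-1}(\tilde f(\cdot|k_n))\eta(k_n)$, Burkholder--Davis--Gundy, Minkowski in $L^{p/2}$, and conditioning on $\mathcal F_{k_n-1}$ to pay one factor of $M$ per level. Your explicit remark that $I_{n-1}(\tilde f(\cdot|k_n))$ is $\mathcal F_{k_n-1}$-measurable is exactly the point the paper uses (implicitly) to justify that conditioning step, so there is nothing to add.
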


\begin{proof}
  Let us start with $n = 1$. In that case the discrete time Burkholder-Davis-Gundy inequality gives
  \begin{align*}
      \E [| I_1 (f) |^p] & \simeq \E \Big[ \Big| \sum_k | f(k) |^2 | \eta (k) |^2 \Big|^{p / 2} \Big] = \Big\| \sum_k | f(k) |^2 | \eta (k) |^2 \Big\|_{L^{p/2}(\Omega)}^{p/2} \le \Big( \sum_k | f(k) |^2 \| \eta(k) \|_{L^p(\Omega)}^{1/2} \Big)^{p/2} \\
      & \le \Big( \sum_k | f(k) |^2 \Big)^{p/2} M,
  \end{align*}
  where we used that $p \ge 2$ and therefore Minkowski's inequality applies. Assume now the claim is shown for $n - 1$. Then we apply again the Burkholder-Davis-Gundy inequality and Minkowski's inequality to get
  \begin{align*}
     \E [| I_n (f) |^p] =\E \Big[ \Big| \sum_{k_n} I_{n - 1} (\tilde{f} (\cdot |k_n)) \eta (k_n) \Big|^p \Big] & \lesssim \E \Big[ \Big( \sum_{k_n} | I_{n - 1} (\tilde{f} (\cdot |k_n)) |^2 | \eta (k_n) |^2 \Big)^{p / 2} \Big] \\
     &  \le \Big(\sum_{k_n} \E[| I_{n - 1} (\tilde{f} (\cdot |k_n)) |^p | \eta (k_n) |^p]^{2/p} \Big)^{p/2} \\
     & \le \Big(\sum_{k_n} \E[| I_{n - 1} (\tilde{f} (\cdot |k_n)) |^p ]^{2/p} \Big)^{p/2} M.
  \end{align*}
  The induction hypothesis now yields
  \begin{align*}
     \sum_{k_n} \E[| I_{n - 1} (\tilde{f} (\cdot |k_n)) |^p]^{2/p} & \lesssim \sum_{k_n} \Big( \Big(\sum_{k_1, \ldots, k_{n-1}} | \tilde f (k_1, \ldots, k_{n-1} | k_n) |^2\Big)^{p/2} M^{n-1} \Big)^{2 / p} \\
     & \le \sum_{k_1,\dots,k_n} |f(k_1, \dots, k_n)|^2 M^{2(n-1) / p},
  \end{align*}
  from where the claim readily follows.
\end{proof}

\begin{remark}
  We assumed that $f$ is real valued, but of course Proposition~\ref{prop:hypercontractivity} extends to complex
  valued $f$ by writing $f = f_1 + i f_2$ for real valued $f_1, f_2$ and then
  applying Proposition~\ref{prop:hypercontractivity} for $f_1$ and $f_2$ separately. 
\end{remark}

The following simple observation will be used many times, which is why we formulate it as a lemma.

\begin{lemma}\label{lem:fourier diagonalization}
  Let $N$ be odd and let $k \in
  \Z^d$. Then
  \[
     \sum_{| \ell |_{\infty} < N / 2} e^{i \langle k, \varepsilon \ell \rangle} = \prod_{j = 1}^d e^{i k_j (- N / 2 + 1 / 2)} \sum_{\ell_j = 0}^{N - 1} e^{i \varepsilon k_j \ell_j } =  \prod_{j = 1}^2 e^{i k_j (- N / 2 + 1 / 2)} (N\1_{k_j = 0}) = N^d \1_{k = 0}.
  \]
\end{lemma}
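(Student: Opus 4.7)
The plan is to view this as the standard orthogonality relation for characters of the finite abelian group $(\Z/N\Z)^d$, realized by a direct computation. The three displayed equalities correspond to three steps: factorizing, shifting, and summing a geometric series.

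First I would use that $\{\ell : |\ell|_\infty < N/2\}$ equals $\{-(N-1)/2, \dots, (N-1)/2\}^d$ (using that $N$ is odd) and that $e^{i\langle k, \varepsilon \ell\rangle} = \prod_{j=1}^d e^{i\varepsilon k_j \ell_j}$, so that by Fubini the sum factorizes into a product of $d$ one-dimensional sums. For each coordinate $j$, I would then perform the change of variable $\ell_j = m_j - (N-1)/2 = m_j + (-N/2 + 1/2)$ with $m_j \in \{0, 1, \dots, N-1\}$, pulling the constant phase $e^{ik_j(-N/2+1/2)}$ out of the sum; this gives the first equality in the statement.

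For the second equality, I would evaluate the inner geometric sum $\sum_{m_j=0}^{N-1} e^{i\varepsilon k_j m_j}$ with ratio $q = e^{i\varepsilon k_j} = e^{2\pi i k_j/N}$. Since $q^N = e^{2\pi i k_j} = 1$, the standard formula for a geometric series yields $N$ if $q = 1$ (i.e.\ $k_j \equiv 0 \pmod N$, which under the relevant convention means $k_j = 0$) and $(1-q^N)/(1-q) = 0$ otherwise, giving the factor $N\1_{k_j = 0}$. For the final equality, I would observe that when $k_j = 0$ the phase $e^{ik_j(-N/2+1/2)}$ equals $1$, so the product collapses to $N^d \1_{k = 0}$.

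There is no real obstacle here: the lemma is a discrete Plancherel/orthogonality identity and the proof is a routine computation once the index set is factorized. The only minor bookkeeping is using the oddness of $N$ to make the symmetric range $|\ell|_\infty < N/2$ coincide with exactly $N$ consecutive integers per coordinate, which makes the shift to $\{0, \dots, N-1\}$ exact and turns the sum into a clean geometric series.
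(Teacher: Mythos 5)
Your proof is correct and follows exactly the (implicit) argument in the paper: the lemma's displayed chain of equalities is itself the proof sketch, and you have simply filled in the routine steps (factorize using oddness of $N$, shift the index range, sum the geometric series with ratio $e^{2\pi i k_j/N}$, then collapse the phase when $k_j=0$). One minor point worth noting: both the paper's intermediate expression and your restatement of it omit a factor of $\varepsilon$ in the exponent of the shifted phase (it should be $e^{i\varepsilon k_j(-N/2+1/2)}$), but since the phase is unimodular and is multiplied by $N\1_{k_j=0}$, this typo has no effect on the final identity; likewise your remark that the geometric sum vanishes unless $k_j\equiv 0\pmod N$ (not merely $k_j=0$) correctly identifies the only real caveat, which is harmless in the paper's applications where $|k|_\infty<N$.
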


\begin{corollary}\label{cor:fourier stochastic integral}
   Assume that $\eta_N$ satisfies (H$_{\mathrm{mart}}$) and define $\xi_N= \varepsilon^{-1} \mathcal{E}_N \eta_N (\cdot / \varepsilon)$. Let $n \in \{1,2\}$ and $f \colon (\Z^2)^n \to \C$. Then
  \begin{align*}
     &\E\Big[ \Big| \sum_{k_1, \dots, k_n \in E_N} f(k_1,\dots,k_n) (\CF \xi_N(k_1) \dots \CF \xi_N(k_n) - \E[\CF \xi_N(k_1) \dots \CF \xi_N(k_n)]) \Big|^{p/n} \Big] \\
     &\hspace{50pt} \lesssim \Big( \sum_{k_1, \dots, k_n \in E_N}  |f (k_1, \ldots, k_n)|^2 \Big)^{p / (2n)} M,
  \end{align*}
  where we introduced the notation
  \[
     E_N = \{ k \in \Z^2: |\Z|_\infty < N/2\}.
  \]
\end{corollary}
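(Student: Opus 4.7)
The plan is to rewrite the sum inside the absolute value as a combination of multiple stochastic integrals in the martingale difference sequence $\tilde\eta_j := \eta_N(\zeta(j))$, $j = 0, \dots, N^2-1$, and then invoke Proposition~\ref{prop:hypercontractivity}. The starting point is the explicit formula $\CF\xi_N(k) = \varepsilon \sum_{\ell \in E_N} e^{-i\langle k, \varepsilon\ell\rangle} \eta_N(\ell)$ for $k \in E_N$.

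For $n=1$ exchanging the order of summation immediately gives $\sum_{k_1} f(k_1)\CF\xi_N(k_1) = \sum_\ell g(\ell)\eta_N(\ell)$ with $g(\ell) = \varepsilon\sum_{k_1} f(k_1) e^{-i\langle k_1,\varepsilon\ell\rangle}$; this is a first-order stochastic integral in $\tilde\eta$, and Proposition~\ref{prop:hypercontractivity} combined with the discrete Plancherel identity that follows from Lemma~\ref{lem:fourier diagonalization} (yielding $\|g\|_{\ell^2}^2 = (2\pi)^2 \sum_{k_1}|f(k_1)|^2$) closes this case.

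For $n=2$ I would expand the product into the summand $\varepsilon^2 e^{-i(\langle k_1,\varepsilon\ell_1\rangle + \langle k_2,\varepsilon\ell_2\rangle)} \eta_N(\ell_1)\eta_N(\ell_2)$. Since $\E[\eta_N(\ell_1)\eta_N(\ell_2)] = \1_{\ell_1=\ell_2}$, subtracting this expectation separates the sum naturally into (i) an off-diagonal contribution on $\{\ell_1 \ne \ell_2\}$, which after symmetrization and re-indexing via $\zeta$ is a bona fide second-order integral $I_2$ in $\tilde\eta$, and (ii) a diagonal correction proportional to $\sum_\ell \hat f(\varepsilon\ell,\varepsilon\ell)(\eta_N(\ell)^2 - 1)$, which is a first-order integral in the \emph{new} martingale difference sequence $\tilde\eta_j^2 - 1$. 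The latter is indeed centered in the original filtration (since $\E[\tilde\eta_j^2|\mathcal F_{j-1}] = 1$) and has uniformly bounded conditional $(p/2)$-th moments since $p>6$, so Proposition~\ref{prop:hypercontractivity}, applied with exponent $p/2 \ge 2$, controls each of the two pieces in $L^{p/2}$ by the $\ell^2$ norm of its kernel.

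The final step is to translate these $\ell^2$ bounds back into $\sum|f|^2$. For the off-diagonal part a two-dimensional application of Lemma~\ref{lem:fourier diagonalization} gives $\sum_{\ell_1,\ell_2}|\hat f(\varepsilon\ell_1,\varepsilon\ell_2)|^2 = N^4 \sum|f|^2$. For the diagonal the kernel $F(\ell) := \hat f(\varepsilon\ell,\varepsilon\ell)$ is the Fourier series of $m \mapsto \sum_{k_1+k_2 = m} f(k_1,k_2)$, so combining Lemma~\ref{lem:fourier diagonalization} (with minor aliasing since $k_1+k_2$ ranges over $\{-(N-1),\dots,N-1\}^2$) with the pigeonhole bound $|\{(k_1,k_2) \in E_N^2 : k_1+k_2 = m\}| \le N^2$ yields the same scaling $\sum_\ell |F(\ell)|^2 \lesssim N^4 \sum|f|^2$. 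The prefactors $\varepsilon^{2n}N^{2n} = (2\pi)^{2n}$ cancel cleanly and produce the stated estimate. The main subtlety throughout is the $n=2$ diagonal: one must recognize $\tilde\eta_j^2 - 1$ as a separate martingale difference sequence in order to reapply Proposition~\ref{prop:hypercontractivity}, since a crude triangle inequality there would not give the correct exponent $p/(2n)$.
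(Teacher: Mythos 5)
Your proposal is correct and follows essentially the same route as the paper: split the $n=2$ sum into the off-diagonal part (a genuine $I_2$ in $\tilde\eta$) and the diagonal part rewritten as an $I_1$ in the new martingale difference sequence $\tilde\eta_j^2-1$, apply Proposition~\ref{prop:hypercontractivity} with exponent $p/2$ to each, and convert the kernel $\ell^2$ norms back to $\sum|f|^2$ via Lemma~\ref{lem:fourier diagonalization}. Your pigeonhole treatment of the diagonal kernel is a harmless variant of the paper's simpler observation that the diagonal sum is dominated by the full double sum over $(\ell_1,\ell_2)$, which Plancherel collapses directly.
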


\begin{remark}
   As the notation suggests we expect a similar bound (involving subtractions of more complicated corrector terms than only the expectation) to hold at least for all $n \le p/2$. But since here we only need the cases $n=1,2$ for which the proof is relatively simple, we do not study the general case.
\end{remark}

\begin{proof}
   We prove the claim for $n=2$, the case $n=1$ follows from similar but simpler arguments. We have
   \begin{align}\label{eq:fourier stochastic integral pr1} \nonumber
      & \E\Big[ \Big| \sum_{k_1, k_2 \in E_N} f(k_1,k_2) (\CF \xi_N(k_1) \CF \xi_N(k_2) - \E[\CF \xi_N(k_1) \CF \xi_N(k_2)]) \Big|^{p/2} \Big] \\ \nonumber
      &\hspace{50pt} = \E\Big[ \Big| \sum_{k_1,k_2 \in E_N}  f(k_1,k_2) \sum_{\ell_1, \ell_2 \in E_N} \varepsilon^2 e^{- i \langle k_1, \varepsilon \ell_1 \rangle - i \langle k_2, \varepsilon \ell_2 \rangle} (\eta_N(\ell_1) \eta_N(\ell_2) - \delta_{\ell_1,\ell_2}) \Big|^{p/2} \Big] \\ \nonumber
      &\hspace{50pt} \lesssim \E\Big[ \Big| \sum_{\ell_1 \neq \ell_2} \Big(\sum_{k_1, k_2 \in E_N} f(k_1,k_2) \varepsilon^2 e^{- i \langle k_1, \varepsilon \ell_1 \rangle} e^{- i \langle k_2, \varepsilon \ell_2 \rangle} \Big) \eta_N(\ell_1) \eta_N(\ell_2) \Big|^{p/2} \Big] \\
      &\hspace{80pt} +  \E\Big[ \Big| \sum_{\ell \in E_N}\Big(\sum_{k_1, k_2 \in E_N} f(k_1,k_2) \varepsilon^2 e^{- i \langle k_1 + k_2, \varepsilon \ell \rangle} \Big) (\eta_N(\ell)^2 - 1)\Big|^{p/2} \Big],
   \end{align}
   and assumption~(H$_{\mathrm{rw}}$) implies that $(\eta_N(\zeta(\ell))^2 - 1)_{\ell =0,\dots, N^2-1}$ is a martingale with
   \[
      \E[|\eta_N(\zeta(\ell))^2 - 1|^{p/2} | (\eta_N(\zeta(0))^2 - 1), \dots, (\eta_N(\zeta(\ell-1))^2 - 1)] \le M,
   \]
   so Proposition~\ref{prop:hypercontractivity} yields
   \begin{align*}
      &\E\Big[ \Big| \sum_{\ell \in E_N}\Big(\sum_{k_1, k_2 \in E_N} f(k_1,k_2) \varepsilon^2 e^{- i \langle k_1 + k_2, \varepsilon \ell \rangle} \Big) (\eta_N(\ell)^2 - 1) \Big|^{p/2} \Big] \\
      &\hspace{50pt} \lesssim \Big(\sum_{\ell \in E_N}\Big|\sum_{k_1, k_2 \in E_N} f(k_1,k_2) \varepsilon^2 e^{- i \langle k_1 + k_2, \varepsilon \ell \rangle} \Big|^2\Big)^{p/4} M.
   \end{align*}
   Similarly we get for the first term on the right hand side of~\eqref{eq:fourier stochastic integral pr1}
   \begin{align*}
      &\E\Big[ \Big| \sum_{\ell_1 \neq \ell_2} \Big(\sum_{k_1, k_2 \in E_N} f(k_1,k_2) \varepsilon^2 e^{- i \langle k_1, \varepsilon \ell_1 \rangle} e^{- i \langle k_2, \varepsilon \ell_2 \rangle} \Big) \eta_N(\ell_1) \eta_N(\ell_2) \Big|^{p/2} \Big] \\
      &\hspace{50pt} \lesssim \Big(\sum_{\ell_1 \neq \ell_2} \Big| \sum_{k_1, k_2 \in E_N} f(k_1,k_2) \varepsilon^2 e^{- i \langle k_1, \varepsilon \ell_1 \rangle} e^{- i \langle k_2, \varepsilon \ell_2 \rangle} \Big|^2 \Big)^{p/4} M,
   \end{align*}
   and in conclusion
   \begin{align*}
      & \E\Big[ \Big| \sum_{k_1, k_2 \in E_N} f(k_1,k_2) (\CF \xi_N(k_1) \CF \xi_N(k_2) - \E[\CF \xi_N(k_1) \CF \xi_N(k_2)])  \Big|^{p/2} \Big] \\
      &\hspace{50pt} \lesssim \Big(\sum_{\ell_1, \ell_2 \in E_N} \Big| \sum_{k_1, k_2 \in E_N} f(k_1,k_2) \varepsilon^2 e^{- i \langle k_1, \varepsilon \ell_1 \rangle} e^{- i \langle k_2, \varepsilon \ell_2 \rangle} \Big|^2 \Big)^{p/4} M.
   \end{align*}
   Now we expand the double sum $|\sum_{k_1,k_2} a(k_1,k_2)|^2 = \sum_{k_1,k_1', k_2, k_2'} a(k_1,k_2) a(k_1,k_2)^\ast$, where $(\cdot)^\ast$ denotes the complex conjugate, and then apply Lemma~\ref{lem:fourier diagonalization} to collapse the big sum to the diagonals $k_1 = k_1'$ and $k_2 = k_2'$, which leads to
   \[
     \E\Big[ \Big| \sum_{k_1, k_2 \in E_N} f(k_1,k_2) (\CF \xi_N(k_1) \CF \xi_N(k_2) - \E[\CF \xi_N(k_1) \CF \xi_N(k_2)]) \Big|^{p/2} \Big] \lesssim \Big( \sum_{k_1, k_2 \in E_N} |f(k_1,k_2)|^2 \Big)^{p/4} M,
   \]
   and this concludes the proof.
\end{proof}

With the help of this corollary the tightness proof for the potential is
quite straightforward.

\begin{lemma}\label{lem:wn tightness}
   Assume that $\eta_N$ satisfies (H$_{\mathrm{mart}}$). Define $\xi_N= \varepsilon^{-1} \mathcal{E}_N \eta_N (\cdot / \varepsilon)$.  Then we have for all $\gamma < - 1 - 2 / p$
  \begin{equation}
    \label{eq:wn moment bound} \sup_N \E [\| \xi_N \|_{\CC^\gamma_\infty}^p]
    \lesssim M .
  \end{equation}
  In particular, $(\xi_N)$ converges to the white noise $\xi$ in distribution in
  $\CC^{\gamma}_\infty$ for all $\gamma < - 1 - 2 / p$.
\end{lemma}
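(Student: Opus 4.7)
The plan is to combine the $n=1$ case of Corollary~\ref{cor:fourier stochastic integral} with Bernstein's inequality and a dyadic summation, which is the standard route from pointwise Fourier bounds to Besov moment estimates. Since the $(\eta_N(\ell))$ are centered we have $\E[\CF \xi_N(k)] = 0$, so for each $x \in \T^2$ and each dyadic level $j \ge -1$ I would view
\[
\Delta_j \xi_N(x) = (2\pi)^{-2} \sum_{k \in E_N} \rho_j(k)\, e^{i\langle k, x \rangle}\, \CF \xi_N(k)
\]
as a linear functional in the $\CF \xi_N(k)$ and apply Corollary~\ref{cor:fourier stochastic integral} with $f(k) = (2\pi)^{-2}\rho_j(k)e^{i\langle k,x\rangle}$. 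Because $\rho_j$ is supported in an annulus of radius $\sim 2^j$ containing $\lesssim 2^{2j}$ lattice points, each contributing at most $1$, this yields
\[
\E\bigl[|\Delta_j \xi_N(x)|^p\bigr] \lesssim \Bigl(\sum_{k \in E_N} \rho_j(k)^2\Bigr)^{p/2} M \lesssim 2^{jp} M,
\]
uniformly in $x$ and $N$, hence $\E[\|\Delta_j \xi_N\|_{L^p}^p] \lesssim 2^{jp} M$ after integrating in $x$.

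Next, since $\Delta_j \xi_N$ is spectrally supported in a ball of radius $\lesssim 2^j$, Bernstein's inequality gives $\|\Delta_j \xi_N\|_{L^\infty} \lesssim 2^{2j/p} \|\Delta_j \xi_N\|_{L^p}$. Bounding the supremum in the definition of $\CC^\gamma_\infty$ by an $\ell^p$-sum I obtain
\[
\E\bigl[\|\xi_N\|_{\CC^\gamma_\infty}^p\bigr] \le \sum_j 2^{jp\gamma}\, \E\bigl[\|\Delta_j \xi_N\|_{L^\infty}^p\bigr] \lesssim M \sum_j 2^{jp(\gamma + 1 + 2/p)},
\]
which is summable precisely when $\gamma < -1 - 2/p$. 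This establishes~\eqref{eq:wn moment bound}.

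For the convergence in distribution, fix $\gamma < -1-2/p$ and choose $\gamma' \in (\gamma, -1-2/p)$. The uniform bound in $\CC^{\gamma'}_\infty$ together with the compact embedding $\CC^{\gamma'}_\infty \hookrightarrow \CC^\gamma_\infty$ on the torus gives tightness of $(\xi_N)_N$ in $\CC^\gamma_\infty$. Since Lemma~\ref{lem:white noise convergence} identifies the white noise $\xi$ as the unique possible limit in $\CS'(\T^2)$, every subsequential limit in $\CC^\gamma_\infty$ must coincide with $\xi$, and the convergence in distribution in $\CC^\gamma_\infty$ follows. No step is genuinely hard here; the only point requiring care is matching the exponents $p$, $2/p$ and $-1-2/p$ so that Bernstein's gain combines correctly with the dyadic weight and Corollary~\ref{cor:fourier stochastic integral}.
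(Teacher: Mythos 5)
Your proof is correct and follows essentially the same route as the paper: the $n=1$ case of Corollary~\ref{cor:fourier stochastic integral} gives $\E[|\Delta_j\xi_N(x)|^p]\lesssim 2^{jp}M$, and then one trades integrability for regularity to pass from the $B^{\gamma+2/p}_{p,p}$ bound to a $\CC^\gamma_\infty$ bound. The only cosmetic difference is that the paper invokes the Besov embedding of Lemma~\ref{lem:besov embedding} as a black box (working in $B^{\gamma+2/p}_{p,p}$ and embedding into $\CC^\gamma_\infty$), whereas you reprove the same inclusion in this special case via Bernstein's inequality together with the $\ell^\infty\le\ell^p$ comparison; these are the same estimate.
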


\begin{proof}
  We already established the convergence of $(\xi_N)$ to the white noise in
  Lemma~\ref{lem:white noise convergence}. Once we establish~(\ref{eq:wn moment bound}), we get tightness of $(\xi_N)$ in
  $\CC^{\gamma'}_\infty$ for all $\gamma' < \gamma < - 1 - 2 / p$, from where the
  claimed convergence follows. But by the Besov embedding theorem,
  Lemma~\ref{lem:besov embedding}, we have
  $\| \xi_N \|_{\CC^{\gamma}_\infty} \lesssim \| \xi_N \|_{B^{\gamma + 2 / p}_{p,
  p}}$. Let us write $\beta = \gamma + 2 / p$. Then
  \[ \E [\| \xi_N \|_{B^{\beta}_{p, p}}^p] = \sum_{j \ge - 1}
     2^{j p \beta} \E [\| \Delta_j \xi_N \|_{L^p}^p] = \sum_{j
     \ge - 1} 2^{j p \beta} \int_{\T^2} \E [| \Delta_j
     \xi_N (x) |^p] \mathd x,
  \]
  and from Corollary~\ref{cor:fourier stochastic integral} we get
  \[
     \E [| \Delta_j \xi_N (x) |^p] = \E \Big[ \Big| \sum_{k \in E_N} (2\pi)^{-2} e^{i \langle k, x \rangle}\rho_j(k) \CF \xi_N(k) \Big|^p \Big] \lesssim \Big( \sum_{k \in E_N} \rho_j (k)^2 \Big)^{p/2} M \lesssim 2^{j p} M,
  \]
  which multiplied with $2^{jp\beta}$ is summable in $j$ whenever $\beta < -1$.
\end{proof}

Next, we need to study the convergence of $(X_N)$ and of $(X_N
\diamond \xi_N)$. For $X_N$ we have $\CF X_N (k) =\1_{k \neq 0} \CF
\xi_N (k) / (f (\varepsilon k) | k |^2)$, from where it easily follows that $
(X_N, \xi_N)$ converges jointly in distribution to $(X, \xi)$. Moreover, since
$\xi_N$ is spectrally supported on the set $(- N / 2, N / 2)^2$ where $f(\varepsilon k) \ge c_f > 0$, we get
\[ \| X_N \|_{\gamma + 2} \lesssim \|  \xi_N - (2\pi)^{-2} \CF \xi_N(0)\|_{\gamma} \lesssim
   \| \xi_N \|_{\gamma}, \]
from where we get the tightness of $(X_N)$ in $\CC^{\gamma+2}_\infty$ for all $\gamma < -1-2/p$. The term $X_N \diamond \xi_N$ is
more tricky. There are limit theorems for polynomials of i.i.d.
variables, see for example~{\cite{Janson1997, Mossel2010, Caravenna2013}}, and it should be possible to generalize them to the case of martingale increments.
However, here we can simply use a relatively cheap diagonal sequence argument to combine the identification of
the limit of $(X_N \diamond \xi_N)$ with the proof of its tightness. This is inspired by Mourrat and Weber~{\cite{Mourrat2014}}, Theorem~6.2.

\begin{lemma}\label{lem:area convergence}
  In the setting of Lemma~\ref{lem:wn tightness} define $\CF X_N (k) = \1_{\{k \neq 0 \}} \CF \xi_N (k) / (f(\varepsilon k) | k |^2)$ and set
  \[
     X_N \diamond \xi_N = X_N \reso \xi_N - \tilde{c}_N
  \]
  with $\tilde{c}_N = (2 \pi)^{- 2} \sum_{| k |_{\infty} < N / 2} \frac{\1_{k \neq 0}}{f (\varepsilon k) | k |^2}$. Then we have for all $\gamma < - 4 / p$
  \[
     \sup_N \E [\| X_N \diamond \xi_N \|_{\CC^\gamma_\infty}^{p/2}] \lesssim M. 
  \]
  Moreover, with $c_K = (2 \pi)^{- 2} \sum_{| k |_{\infty} < K / 2}\frac{\1_{k \neq 0}}{| k |^2}$ we get for all $N > K^2$ and all $\gamma \in (-1-4/p,-4/p)$
  \[
    \sup_N \E [\| X_N \diamond \xi_N - (\mathcal{P}_K X_N \reso \mathcal{P}_K \xi_N - c_K) \|_{\CC^\gamma_\infty}^{p/2}] \lesssim K^{\gamma + 4 / p} M,
  \]
  where $\mathcal{P}_K u = \CF^{-1} (\1_{(-K/2,K/2)^2} \CF u)$.
\end{lemma}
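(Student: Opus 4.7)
The proof follows the template of Lemma~\ref{lem:wn tightness}, but now relies on the second-order case ($n=2$) of Corollary~\ref{cor:fourier stochastic integral}. I would start from the Fourier representation: since $\CF X_N(k) = \1_{k\ne 0}\CF\xi_N(k)/(f(\varepsilon k)|k|^2)$, and since the partition-of-unity identity $\sum_{|i-i'|\le 1}\rho_i(k)\rho_{i'}(k) = 1$ for $k\ne 0$ combined with $\E[|\CF\xi_N(k)|^2] = (2\pi)^2$ shows that $\tilde c_N = \E[(X_N\reso\xi_N)(x)]$ (constant in $x$ by stationarity), each Littlewood--Paley block of $X_N\diamond\xi_N$ can be written as
\[
\Delta_j(X_N\diamond\xi_N)(x) = (2\pi)^{-4}\sum_{k_1,k_2\in E_N} f^{x,j}(k_1,k_2)\,\bigl[\CF\xi_N(k_1)\CF\xi_N(k_2) - (2\pi)^2\delta_{k_1,-k_2}\bigr],
\]
with kernel $f^{x,j}(k_1,k_2) = e^{i\langle k_1+k_2,x\rangle}\rho_j(k_1+k_2)\Psi(k_1,k_2)\1_{k_1\ne 0}/(f(\varepsilon k_1)|k_1|^2)$ and $\Psi(k_1,k_2) := \sum_{|i-i'|\le 1}\rho_i(k_1)\rho_{i'}(k_2)$.

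For the first bound I would use the Besov embedding $B^{\gamma+4/p}_{p/2,p/2}\hookrightarrow\CC^\gamma_\infty$ and Corollary~\ref{cor:fourier stochastic integral} exactly as in Lemma~\ref{lem:wn tightness}; setting $\beta = \gamma+4/p$, this reduces the estimate to
\[
\sum_j 2^{jp\beta/2}\sup_x\Bigl(\sum_{k_1,k_2\in E_N}|f^{x,j}(k_1,k_2)|^2\Bigr)^{p/4} < \infty.
\]
The key combinatorial estimate is $\sum_{k_1,k_2}\rho_j(k_1+k_2)^2\Psi(k_1,k_2)^2|k_1|^{-4}\1_{k_1\ne 0}\lesssim 1$ uniformly in $j$, which follows by partitioning according to the dyadic scale $|k_1|\sim|k_2|\sim 2^i$ forced by $\Psi$, noting that $\rho_j\cdot\Psi$ requires $2^j\lesssim 2^i$, and summing $2^{2i}\cdot 2^{2j}\cdot 2^{-4i} = 2^{2j-2i}$ over $i\ge j-O(1)$. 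The resulting geometric series in $j$ converges exactly when $\beta<0$, i.e.\ $\gamma<-4/p$, and $f(\varepsilon k)\ge c_f>0$ absorbs $f$ harmlessly.

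For the second bound the same framework applies with the kernel $f^{x,j}$ multiplied by $(1-\1_{k_1,k_2\in E_K})$ and the renormalization replaced by $\tilde c_N - c_K$. The cutoff combined with the resonant constraint $|k_1|\sim|k_2|$ enforced by $\Psi$ forces both $|k_1|,|k_2|\gtrsim K$, so the combinatorial estimate improves to $\lesssim \min(1,2^{2j}K^{-2})$: the sum over $i$ now starts at $\max(j,\log_2 K)$. Splitting the $j$-sum at $2^j\sim K$ then yields the claimed $K$-decaying bound, and the two endpoints $\gamma\in(-1-4/p,-4/p)$ (i.e.\ $\beta\in(-1,0)$) are exactly what make both halves of this split converge (the upper endpoint for the high-frequency tail, the lower endpoint for the low-frequency part). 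The deterministic correction $\tilde c_N - c_K$ only enters $\Delta_{-1}$ and is controlled using $f\in C^4_b$, $f(0)=1$, and $N>K^2$, which together imply $|f(\varepsilon k)^{-1}-1|\lesssim |\varepsilon k|^2\lesssim K^{-2}$ on $E_K$.

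The main obstacle, already present in the analogous estimates of Mourrat--Weber~\cite{Mourrat2014}, is keeping the resonant localization $|k_1|\sim|k_2|$ and the Littlewood--Paley localization $|k_1+k_2|\sim 2^j$ compatible so that the singularity $|k_1|^{-4}$ remains integrable in the pair $(k_1,k_2)$; this is exactly what makes the second moment of the second chaos $X_N\reso\xi_N$ finite in dimension two and what produces the $K^{-2}$ gain once one restricts to high frequencies.
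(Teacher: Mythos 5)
Your proposal is correct and follows essentially the same route as the paper's proof: the same identification $\tilde c_N=\E[(X_N\reso\xi_N)(x)]$, the same reduction of each Littlewood--Paley block to a second-order discrete chaos handled by Corollary~\ref{cor:fourier stochastic integral}, the same combinatorial estimate $\sum_{k_1,k_2}\rho_j(k_1+k_2)^2\Psi(k_1,k_2)^2|k_1|^{-4}\lesssim\min(1,2^{2j}K^{-2})$ (the paper phrases the interpolation via exponents $\lambda_q\in[0,1]$), and the same split of the $j$-sum at $2^j\sim K$ with the endpoints $\beta\in(-1,0)$. The only cosmetic difference is in the deterministic correction $\tilde c_N-c_K$, where you use the second-order bound $|f(\varepsilon k)-1|\lesssim|\varepsilon k|^2$ while the paper uses the first-order bound $|f(\varepsilon k)-f(0)|\lesssim N^{-1}|k|$; both give a sufficient negative power of $K$.
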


\begin{proof}

   Applying Lemma~\ref{lem:fourier diagonalization} together with the fact that $\sum_{| i - j | \le 1} \rho_i (k) \rho_j (k) = 1$, we get for any $x \in \T^2$
   \begin{align*}
      \tilde{c}_N & = \varepsilon^2 (2 \pi)^{- 4} \sum_{| i - j | \le 1} \sum_{| \ell |_{\infty}, | k_1 |_{\infty}, | k_2 |_{\infty} < N / 2} \frac{\1_{k_1 \neq 0}}{f (\varepsilon k_1) | k_1 |^2} e^{i \langle k_1 + k_2, x - \varepsilon \ell \rangle} \rho_i (k_1) \rho_j (k_2)  = \E[(X_N \reso \xi_N)(x)].
   \end{align*}
   Similarly we obtain $\E[\Delta_q (X_N \reso \xi_N)(x)] = 0$ for $q\ge 0$, and therefore $\Delta_q \tilde c_N = \E[\Delta_q (X_N \reso \xi_N)(x)]$ for all $q \ge -1$ and all $x \in \T^2$. So if we write $\Delta_q (X_N \reso \xi_N)(x) =  \sum_{k_1, k_2 \in E_N} a^N_{q,x} (k_1, k_2) \CF \xi_N (k_1) \CF \xi_N (k_2)$, then
   \begin{align*}
      \E [ | \Delta_q (X_N \diamond \xi_N) (x) |^{p/2}] & = \E\Big[\Big|\sum_{k_1, k_2 \in E_N} a^N_{q,x} (k_1, k_2) ( \CF \xi_N (k_1) \CF \xi_N (k_2) - \E[\CF \xi_N (k_1) \CF \xi_N (k_2)]) \Big|^{p/2} \Big] \\
      & \lesssim \Big( \sum_{k_1,k_2 \in E_N} |a^N_{q,x} (k_1, k_2)|^2 \Big)^{p/4} M
   \end{align*}
   by Corollary~\ref{cor:fourier stochastic integral}, and
   \begin{align}\label{eq:area convergence pr1} \nonumber
      \sum_{k_1,k_2 \in E_N} |a^N_{q,x} (k_1, k_2)|^2 & = (2 \pi)^{- 4} \sum_{k_1,k_2 \in E_N} \rho_q (k_1 + k_2)^2 \frac{\1_{k_1 \neq 0}}{f (\varepsilon k_1)^2 | k_1 |^4} \Big( \sum_{| i - j | \le 1} \rho_i (k_1) \rho_j (k_2) \Big)^2 \\
       &\lesssim \sum_{k_1,k_2 \in E_N} \1_{|k_1 + k_2 | \sim 2^q} \frac{\1_{k_1 \neq 0}}{c_f^2 | k_1 |^4} \1_{| k_1 | \sim | k_2 |} \lesssim \sum_{| k_1 |_{\infty} \gtrsim 2^q} 2^{2 q} \frac{1}{| k_1 |^4} \lesssim 1.
   \end{align}
  
  If instead of $X_N \diamond \xi_N$ we are considering $X_N \diamond \xi_N - (\mathcal{P}_K X_N \reso
  \mathcal{P}_K \xi_N - c_K)$, then we have two contributions: the first one, 
  \[
    X_N \reso \xi_N - \mathcal{P}_K X_N \reso \mathcal{P}_K \xi_N - \E[X_N \reso \xi_N - \mathcal{P}_K X_N \reso \mathcal{P}_K \xi_N],
  \]
  can be bounded as before: We obtain an additional factor $(\1_{| k_1 | > K / 2} +\1_{| k_1 | < K / 2} \1_{| k_2 | > K / 2})^2$ in equation~(\ref{eq:area convergence pr1}), resulting in the improved upper bound
  \begin{equation}\label{eq:area convergence pr3}
    \sum_{k_1,k_2 \in E_N} |a^N_{q,x} (k_1, k_2)|^2 \lesssim 2^{2 q \lambda} K^{- 2 \lambda}
  \end{equation}
  for all $\lambda \in [0, 1]$. The second contribution (which only appears in the block $q=-1$) is
  \begin{align}\label{eq:area convergence pr4} \nonumber
     | c_K -\E [\mathcal{P}_K X_N \reso \mathcal{P}_K \xi_N] | & = \Big| (2 \pi)^{- 2} \sum_{| k |_{\infty} < K / 2} \left( \frac{\1_{k \neq 0}}{| k|^2} - \frac{\1_{k \neq 0}}{f (\varepsilon k) | k |^2} \right) \Big| \lesssim \sum_{| k |_{\infty} < K / 2} \frac{\1_{k \neq 0}}{| k|^2} \frac{| f (\varepsilon k) - f (0) |}{c_f} \\ 
      & \lesssim N^{- 1} \sum_{| k |_{\infty} < K / 2} \frac{\1_{k \neq 0}}{| k |} \lesssim N^{- 1} K \lesssim K^{- 1},
  \end{align}
  where we used that $\varepsilon = 2 \pi / N$ and that $N \ge K^2$.
  
  The bound (\ref{eq:area convergence pr1}) now gives us
  \[
     \E [\| X_N \diamond \xi_N \|_{B^{\beta}_{p/2, p/2}}^{p/2}] \lesssim \sum_{q \ge - 1} 2^{q \beta p/2} M \lesssim M
  \]
  for all $\beta < 0$. Combining instead \eqref{eq:area convergence pr3} and \eqref{eq:area convergence pr4}, we get for all $\lambda_q \in [0, 1]$
  \[
     \E [\| X_N \diamond \xi_N - (\mathcal{P}_K X_N \reso \mathcal{P}_K \xi_N - \tilde{c}_K) \|_{B^{\beta}_{p/2, p/2}}^{p/2}] \lesssim \sum_{q \ge - 1} 2^{q \beta p/2 } 2^{q \lambda_q p/2} K^{- \lambda_q p/2} M.
  \]
  Setting $\lambda_q = 1$ for $2^q \le K$ and $\lambda_q = 0$ for $2^q > K$, we see that the right hand side is bounded by $\lesssim K^{p \beta} M$ whenever $\beta \in (- 1, 0)$. The claim now follows from the Besov embedding theorem, Lemma~\ref{lem:besov embedding}.
\end{proof}

\begin{corollary}\label{cor:stoch data convergence}
  Make assumptions (H$_{\mathrm{mart}}$), (H$_{\mathrm{rw}}$) and (H$_{\mathrm{init}}$) and let $\alpha < 1 - 2 /
  p$. Then $(u^N_0, \xi_N, X_N, X_N \diamond \xi_N)$ converges jointly in
  distribution in $\CC^0_1 \times \CC^{\alpha - 2}_\infty \times \CC^{\alpha}_\infty \times \CC^{2 \alpha -
  2}_\infty$ to $(u_0, \xi, X, X \diamond \xi)$, where $\xi$ is a white noise, $\CF X (0) = 0$, $\CF X (k) = \CF \xi (k) / | k |^2$ for $k \neq 0$, and
  \[
     X \diamond \xi = \lim_{K \rightarrow \infty} \mathcal{P}_K X \reso \mathcal{P}_K \xi -  c_K,
  \]
  for which the convergence was established in~{\cite{Gubinelli2013}}, Lemma~5.7.
\end{corollary}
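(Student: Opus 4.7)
The plan is to combine uniform moment bounds with a diagonal identification of the limit. Convergence of $u_0^N$ to $u_0$ in $\CC^0_1$ is given directly by (H$_{\mathrm{init}}$). For $\xi_N$, since $\alpha < 1 - 2/p$ gives $\alpha - 2 < -1 - 2/p$, Lemma~\ref{lem:wn tightness} provides both tightness in $\CC^{\alpha-2}_\infty$ and identification of the limit as the white noise $\xi$.

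For $X_N$, the relation $\CF X_N(k) = \1_{k \neq 0}\CF \xi_N(k)/(f(\varepsilon k)|k|^2)$ on the spectral support $(-N/2,N/2)^2$, combined with the uniform lower bound $f \ge c_f > 0$, shows that the map $\xi_N \mapsto X_N$ acts as a Fourier multiplier of order $-2$ with symbol bounded uniformly in $N$, so that $\|X_N\|_{\CC^{\alpha}_\infty} \lesssim \|\xi_N\|_{\CC^{\alpha-2}_\infty}$ and tightness of $(X_N)$ in $\CC^\alpha_\infty$ follows. Since $f(\varepsilon k) \to 1$ pointwise for every fixed $k$, and $X$ has Fourier transform $\1_{k\neq 0}\CF \xi(k)/|k|^2$, a Skorokhod/continuous-mapping argument gives the joint convergence $(\xi_N, X_N) \Rightarrow (\xi, X)$ in $\CC^{\alpha-2}_\infty \times \CC^\alpha_\infty$ (jointly with $u^N_0$).

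The delicate component is $X_N \diamond \xi_N$. Since $\alpha < 1 - 2/p$ we have $2\alpha - 2 < -4/p$, so we may choose $\gamma \in (2\alpha-2,\, -4/p)$. The first part of Lemma~\ref{lem:area convergence} then gives $\sup_N \E[\|X_N \diamond \xi_N\|^{p/2}_{\CC^\gamma_\infty}] \lesssim M$, which together with the compact Besov embedding $\CC^\gamma_\infty \hookrightarrow \CC^{2\alpha-2}_\infty$ yields tightness in $\CC^{2\alpha-2}_\infty$. To identify the limit we use a diagonal argument inspired by Mourrat-Weber: write
\[
   X_N \diamond \xi_N = (\mathcal{P}_K X_N \reso \mathcal{P}_K \xi_N - c_K) + R_{N,K},
\]
so that the second bound of Lemma~\ref{lem:area convergence} yields $\E[\|R_{N,K}\|_{\CC^\gamma_\infty}^{p/2}] \lesssim K^{\gamma + 4/p} M \to 0$ as $K\to\infty$, uniformly in $N$ (once $N > K^2$). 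For each fixed $K$, the quantity $\mathcal{P}_K X_N \reso \mathcal{P}_K \xi_N - c_K$ is a continuous polynomial function of the finitely many Fourier modes $(\CF \xi_N(k))_{|k|_\infty < K/2}$, so the joint convergence already obtained plus the continuous mapping theorem give
\[
   (u_0^N, \xi_N, X_N, \mathcal{P}_K X_N \reso \mathcal{P}_K \xi_N - c_K) \Rightarrow (u_0, \xi, X, \mathcal{P}_K X \reso \mathcal{P}_K \xi - c_K),
\]
and the right-hand object converges in $\CC^{2\alpha-2}_\infty$ to $X\diamond\xi$ as $K\to\infty$ by Lemma~5.7 of~\cite{Gubinelli2013}.

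Putting the pieces together, extract any weakly convergent subsequence of the tight family $(u_0^{N_k}, \xi_{N_k}, X_{N_k}, X_{N_k}\diamond \xi_{N_k})$; its first three coordinates are distributed as $(u_0, \xi, X)$, and the uniform smallness of $R_{N_k,K}$ combined with the fixed-$K$ convergence forces the fourth coordinate to coincide in law with $X\diamond\xi$. Hence the full sequence converges to $(u_0, \xi, X, X\diamond\xi)$ as claimed. The only genuinely nontrivial step is the interchange of the limits $N\to\infty$ and $K\to\infty$, which is precisely the content of the improved bound in Lemma~\ref{lem:area convergence}.
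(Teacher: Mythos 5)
Your proof is correct and follows essentially the same route as the paper's: tightness from the moment bounds of Lemmas~\ref{lem:wn tightness} and~\ref{lem:area convergence} (and (H$_{\mathrm{init}}$)), then the Mourrat--Weber diagonal argument using the uniform-in-$N$ estimate on $X_N \diamond \xi_N - Q_K(X_N,\xi_N)$ from the second part of Lemma~\ref{lem:area convergence}, the continuity of $Q_K$ for fixed $K$, and Lemma~5.7 of~\cite{Gubinelli2013} for $K \to \infty$. The only stylistic difference is that you phrase the conclusion via extraction of weakly convergent subsequences while the paper writes a three-term triangle inequality (implicitly after a Skorokhod coupling); these are equivalent, and if anything your formulation is slightly more careful about the fact that $X_N$ and $X$ are a priori defined on different probability spaces.
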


\begin{proof}
  The moment bounds that we derived (or assumed in the case of $u_0^N$) imply the joint tightness of $(u_0^N, \xi_N, X_N, X_N \diamond \xi_N)$ in $\CC^0_1 \times \CC^{\alpha - 2}_\infty \times \CC^{\alpha}_\infty \times \CC^{2 \alpha - 2}_\infty$. The identification of the limit points is trivial, except in the case of $(X_N \diamond \xi_N)$. But since
   \[
      \CC^{\alpha - 2}_\infty \times \CC^{\alpha}_\infty \ni (\varphi, \psi) \mapsto Q_K(\varphi,\psi) := \mathcal{P}_K \varphi \reso \mathcal{P}_K  \psi - c_K \in \CC^{2 \alpha - 2}_\infty
   \]
  is a continuous function, we get that for fixed $K$ the sequence $(Q_K (X_N, \xi_N))$ converges to $Q_K (X,\xi)$ in distribution. Now we simply estimate
  \begin{align*} 
     \| X_N \diamond \xi_N - X \diamond \xi \|_{\CC^{2 \alpha - 2}_\infty} & \le   \| X_N \diamond \xi_N - Q_K (X_N \diamond \xi_N) \|_{\CC^{2 \alpha - 2}_\infty} + \|    Q_K (X_N, \xi_N) - Q_K (X, \xi) \|_{\CC^{2 \alpha - 2}_\infty} \\
     &\quad + \| Q_K (X, \xi) - X \diamond \xi \|_{\CC^{2 \alpha - 2}_\infty}.
  \end{align*}
  For any $K$ the middle term vanishes as $N \rightarrow \infty$, while by Lemma~\ref{lem:area convergence} the first term satisfies for some $\delta>0$
  \[
     \limsup_{N\to \infty}\E[ \| X_N \diamond \xi_N - Q_K (X_N \diamond \xi_N) \|_{\CC^{2 \alpha - 2}_\infty}^{p/2}] \lesssim K^{-\delta} M,
  \]
  and by definition of $X \diamond \xi$ the third term on the right hand side converges to zero as $K \to \infty$.  \end{proof}

\subsection{Bounds on the random operator}

It remains to bound the random operator
\begin{equation}\label{eq:AN def stoch}
   A_N (u) = \Pi_N[(\Pi_N (u \para X_N)) \reso \xi_N] - \PC_N [(u \para X_N) \reso \xi_N]
\end{equation}
introduced in (\ref{eq:random operator}). Let us write $\psi_{\prec} (k, \ell) = \sum_{j\ge 1} \chi (2^{j-1} k) \rho_j (\ell)$ and $\psi_{\circ} (k, \ell) = \sum_{| i - j | \leqslant 1} \rho_i(k) \rho_j (\ell)$, where we recall that $(\chi,\rho)$ is our dyadic partition of unity. We also write $k_{[12]} = k_1 + k_2$ for $k_1, k_2 \in \Z^2$ and recall that for $k \in \Z^2$
\[
   (k^N)_r = \arg \min \{ |\ell| : \ell = k_r + j N \text{ for some } j \in \Z\} \in (-N/2, N/2), \qquad r = 1,2.
\]

\begin{lemma}[\cite{Gubinelli2014KPZ}, Lemma 10.5]
   The operator $A_N$ defined in~\eqref{eq:AN def stoch} is given by
   \begin{equation}\label{eq:random operator representation}
      A_N (u) (x) = \sum_{i,j \ge -1} \Delta_j (A_N (\Delta_i u)) (x) = \sum_{i, j \ge -1} \int_{\T^2} g^N_{i, j} (x, y) \Delta_i u (y) \mathd y
   \end{equation}
   with
  \begin{equation}\label{eq:gpq fourier transform}
     \CF g^N_{i, j} (x, \cdot) (k) = \sum_{k_1, k_2 \in E_N} \Gamma_{i,j}^N (x ; k, k_1, k_2) \CF X_N (k_1) \CF \xi_N (k_2),
  \end{equation}
  where
  \begin{align*}
     \Gamma_{i,j}^N (x ; k, k_1, k_2) & = (2 \pi)^{- 4} \tilde{\rho}_i (k) \psi_{\prec} (k, k_1)  \times \big[ e^{i \langle (k_{[12]} - k)^N, x \rangle} \rho_j ((k_{[12]} - k)^N) \psi_{\circ}((k_1 - k)^N, k_2)  \\
     &\hspace{140pt} -  e^{i \langle k_{[12]} - k, x\rangle} \rho_j (k_{[12]} - k) \psi_{\circ}(k_1 - k, k_2) \1_{|k_{[12]}-k|_\infty \leqslant N/2}\big],
  \end{align*}
  and where $\tilde{\rho}_i$ is a smooth function supported in an annulus $2^i \CA$ such that $\tilde{\rho}_i \rho_i = \rho_i$. 
\end{lemma}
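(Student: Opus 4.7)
The proof amounts to a careful Fourier-series bookkeeping. I would split $A_N(u) = T_1(u) - T_2(u)$ with $T_1(u) = \Pi_N[\Pi_N(u \para X_N) \reso \xi_N]$ and $T_2(u) = \PC_N[(u \para X_N) \reso \xi_N]$, use linearity of each piece together with the Littlewood-Paley decomposition to write $A_N(u) = \sum_{i,j \ge -1} \Delta_j A_N(\Delta_i u)$, and then identify each $\Delta_j A_N(\Delta_i u)(x)$ as an integral against the stated kernel $g^N_{i,j}(x,\cdot)$.

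For the identification step I would compute the Fourier series of $\Delta_j T_1(\Delta_i u)$ and $\Delta_j T_2(\Delta_i u)$ by unfolding the definitions. The paraproduct contributes $\sum_{k+k_1=\ell} \psi_\prec(k, k_1)\rho_i(k) \CF u(k) \CF X_N(k_1)$ to the Fourier coefficient at frequency $\ell$. In $T_1$ the inner $\Pi_N$ wraps $e^{i\langle \ell, x\rangle}$ to $e^{i\langle \ell^N, x\rangle}$; the resonant product with $\xi_N$ brings in a third variable $k_2 \in E_N$ and the selector $\psi_\circ((k+k_1)^N, k_2)$; the outer $\Pi_N$ sends the total frequency $(k+k_1)^N + k_2$ to $((k+k_1)^N + k_2)^N$, which by additivity of the modular reduction equals $(k+k_1+k_2)^N$; finally $\Delta_j$ multiplies by $\rho_j$ of that frequency. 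The analogous (and simpler) computation for $T_2$ yields the selector $\psi_\circ(k+k_1, k_2)$, frequency $k+k_1+k_2$, and the hard cutoff $\1_{|k+k_1+k_2|_\infty \le N/2}$ coming from $\PC_N$.

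Subtracting $T_2$ from $T_1$ gives $\Delta_j A_N(\Delta_i u)(x)$ as a triple sum over $(k, k_1, k_2)$. To recognize it as an integral against $\Delta_i u$, I use the torus Parseval identity $\int_{\T^2} f(y)\Delta_i u(y)\,\dd y = (2\pi)^{-2}\sum_\ell \CF f(-\ell)\rho_i(\ell)\CF u(\ell)$, which amounts to reindexing the dummy variable via $k \mapsto -k$ in the coefficient of $\CF u(k)$. Because $\chi$ and $\rho$ are radial and the modular reduction $(\cdot)^N$ is even, the factors $\psi_\prec$, $\rho_i$, $\rho_j$ and $(\cdot)^N$ are all invariant under this flip, so $k + k_1$ transforms into $k_1 - k$ and $k+k_1+k_2$ into $k_{[12]} - k$, matching the statement. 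The prefactor $\tilde{\rho}_i(k)$ appears in place of $\rho_i(k)$ because we are free to multiply the symbol of $g^N_{i,j}(x,\cdot)$ by any smooth cutoff equal to $1$ on $\tmop{supp}(\rho_i)$: when we pair $g^N_{i,j}$ against $\Delta_i u$ the factor $\rho_i$ is automatically re-inserted by the Littlewood-Paley operator.

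The main bookkeeping subtlety, and essentially the only non-routine step, is the composition of the two $\Pi_N$ operators in $T_1$: one has to verify that the inner $\Pi_N$ genuinely modifies the frequency seen by the subsequent resonant selector, so that $\psi_\circ$ must legitimately be evaluated at $((k+k_1)^N, k_2)$ rather than $(k+k_1, k_2)$, and that the outer $\Pi_N$ can then be absorbed into the first term of $\Gamma^N_{i,j}$ via $((k+k_1)^N + k_2)^N = (k+k_1+k_2)^N$. Once this is checked, the argument reduces to substituting definitions and matching coefficients.
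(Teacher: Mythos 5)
Your plan is correct and is precisely the direct Fourier-coefficient matching that the paper relies on: it omits the proof entirely, citing Lemma~10.5 of \cite{Gubinelli2014KPZ}, where the same unfolding of the paraproduct selector $\psi_\prec$, the inner and outer $\Pi_N$, the resonant selector $\psi_\circ$, and the pairing $\int_{\T^2} f\,\Delta_i u\,\dd y=(2\pi)^{-2}\sum_k\CF f(-k)\CF(\Delta_i u)(k)$ is carried out. You correctly identify all the points that make the formula come out as stated — the modular identity $((k+k_1)^N+k_2)^N=(k+k_1+k_2)^N$, the hard cutoff $\1_{|k_{[12]}-k|_\infty\le N/2}$ coming from $\PC_N$ rather than $\Pi_N$, the reindexing $k\mapsto -k$ (where only the radiality of $\chi$ and $\tilde\rho_i$ is actually needed; the reduction $(\cdot)^N$ is odd rather than even, but this is immaterial since it only ever appears inside radial functions or after direct substitution), and the free insertion of $\tilde\rho_i$ against $\Delta_i u$.
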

A similar representation as~\eqref{eq:random operator representation} was derived in~\cite{Gubinelli2014KPZ} for a similar random operator and in our case the proof is exactly the same, which is why we do not reproduce it.

\begin{lemma}\label{lem:random operator bound}
   For $\alpha \in (1/2,1)$ the following convergence holds in probability:
   \[
      \lim_{N \to \infty} \|A_N\|_{L(\CC_1^{\alpha}, \CC_{1}^{2\alpha-2})} = 0.
   \]
\end{lemma}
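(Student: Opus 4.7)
The approach follows~\cite{Gubinelli2014KPZ}. By Markov's inequality it suffices to show $\E[\|A_N\|^{p/2}_{L(\CC_1^\alpha,\CC_1^{2\alpha-2})}]\to 0$ as $N\to\infty$, with $p$ as in (H$_{\mathrm{mart}}$). Starting from the kernel representation~\eqref{eq:random operator representation}, I would bound $\|\Delta_j A_N(\Delta_i u)\|_{L^1_x}\le\|g^N_{i,j}\|_{L^\infty_y L^1_x}\|\Delta_i u\|_{L^1}$ by Fubini, and since $g^N_{i,j}(x,\cdot)$ is spectrally supported on $|k|\sim 2^i$ (through $\tilde\rho_i$), Bernstein in $y$ gives $\|g^N_{i,j}\|_{L^\infty_y L^1_x}\lesssim 2^{2i/q}\|g^N_{i,j}\|_{L^q_{x,y}}$ for any $q\ge 1$. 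Combined with the embedding $B^{2\alpha-2}_{1,\infty}\hookrightarrow\CC_1^{2\alpha-2}$ and $\|\Delta_i u\|_{L^1}\lesssim 2^{-i\alpha}\|u\|_{\CC_1^\alpha}$, this reduces the operator norm to a dyadic sum of kernel norms:
\[
\|A_N\|_{L(\CC_1^\alpha,\CC_1^{2\alpha-2})}\lesssim\sup_j 2^{j(2\alpha-2)}\sum_i 2^{i(2/q-\alpha)}\|g^N_{i,j}\|_{L^q_{x,y}}.
\]

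Next, for fixed $(x,y)$ the kernel $g^N_{i,j}(x,y)$ is a centered second-order polynomial in the martingale differences $(\eta_N(\ell))_\ell$ (the expectation vanishes because the bracket in $\Gamma_{i,j}^N$ is antisymmetric in its two terms whenever $k_1+k_2=0$ and $|k|_\infty<N/2$). Writing $\CF X_N(k)=G(k)\CF\xi_N(k)$ with $G(k)=\1_{k\neq 0}/(f(\varepsilon k)|k|^2)$, Corollary~\ref{cor:fourier stochastic integral} yields
\[
\E[|g^N_{i,j}(x,y)|^{p/2}]^{2/p}\lesssim M^{2/p}\Big(\sum_{k_1,k_2\in E_N}|G(k_1)|^2\Big|\sum_k (2\pi)^{-2}e^{i\langle k,y\rangle}\Gamma_{i,j}^N(x;k,k_1,k_2)\Big|^2\Big)^{1/2},
\]
and Minkowski's inequality (assuming $p/2\ge q$) transfers this to a deterministic bound on $\E[\|g^N_{i,j}\|_{L^q_{x,y}}^{p/2}]^{2/p}$.

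The crux is to extract a gain in $N$ from the bracket in $\Gamma_{i,j}^N$, which by orthogonality of the $x$-exponentials on $\T^2$ vanishes in $L^2_x$ unless $|k_{[12]}-k|_\infty\ge N/2$; a direct computation gives
\[
\int_{\T^2}|\Gamma_{i,j}^N(x;k,k_1,k_2)|^2\dd x\lesssim\tilde\rho_i(k)^2\psi_\prec(k,k_1)^2\rho_j((k_{[12]}-k)^N)^2\psi_\circ((k_1-k)^N,k_2)^2\1_{|k_{[12]}-k|_\infty\ge N/2}.
\]
The paraproduct constraint $|k|\ll|k_1|$ then forces $|k_{[12]}|\ge N/2-O(2^i)$, and the resonance constraint $|k_2|\sim|(k_1-k)^N|$ pushes both $|k_1|,|k_2|$ to magnitudes comparable to $N$. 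Combined with the decay $|G(k_1)|^2\lesssim|k_1|^{-4}$, the sum over $(k_1,k_2)$ yields a factor $N^{-\delta}$ for some $\delta>0$. Bernstein in both $x$ and $y$ (using spectral supports of scales $2^j$ and $2^i$ respectively) upgrades the $L^2_{x,y}$-estimate to the $L^q_{x,y}$-estimate needed in the previous stage, at the cost of $2^{(i+j)(1-2/q)}$. Plugged into the reduction above and summed over $i,j\le\log_2 N$ using $\alpha\in(1/2,1)$, a net factor $N^{-\delta'}$ survives, and Markov concludes.

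The main obstacle is this final cancellation/summation step: it requires the simultaneous tracking of several Fourier-space constraints in $\Gamma_{i,j}^N$, the aliasing through $(\cdot)^N$, the $|k_1|^{-4}$ decay of $G$, and the dyadic bookkeeping so that the aggregated exponent of $N$ is strictly negative. The hypothesis $\alpha>1/2$ is used precisely here, in order to provide enough slack to absorb the regularity losses from Bernstein's inequality while preserving the $N^{-\delta}$ gain from the bracket cancellation.
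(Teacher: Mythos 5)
There is a genuine gap: your claim that $\E[g^N_{i,j}(x,y)]=0$ is false, and with it the entire deterministic part of the operator goes untreated. Computing the expectation, $\E[\CF\xi_N(k_1)\CF\xi_N(k_2)]=(2\pi)^2\1_{k_1+k_2=0}$ for $k_1,k_2\in E_N$, so on the diagonal $k_2=-k_1$ one has $k_{[12]}=0$, $(k_{[12]}-k)^N=-k$ and the indicator $\1_{|k_{[12]}-k|_\infty\le N/2}$ equals $1$; the bracket in $\Gamma^N_{i,j}$ therefore reduces to $e^{-i\langle k,x\rangle}\rho_j(k)\bigl[\psi_\circ((k_1-k)^N,k_1)-\psi_\circ(k_1-k,k_1)\bigr]$, which does \emph{not} cancel whenever $|k_1-k|_\infty>N/2$ (this happens for $k_1$ in a slab of roughly $N|k|_\infty$ lattice points near the boundary of $E_N$, since $k\neq 0$). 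So $\E[g^N_{i,j}(x,y)]$ is a nonzero convolution kernel $h_{i,j}(y-x)$, and your centering via Corollary~\ref{cor:fourier stochastic integral} only controls $A_N-\E[A_N]$. The expectation part needs its own argument: one shows $|\CF h_{i,j}(k)|\lesssim N\,|k|_\infty\cdot N^{-2}=|k|_\infty/N$ by counting the aliased frequencies $k_1$ and using $|G(k_1)|\sim N^{-2}$ there, then Parseval and Bernstein give an operator norm of order $N^{\alpha-1}$. Without this your proof is incomplete, and the omission is not cosmetic — it is half of the actual argument.

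For the centered part your plan is essentially the correct one and matches the intended mechanism: the bracket vanishes unless some aliasing occurs, which forces $|k_1|_\infty\simeq N$ and lets the $|k_1|^{-4}$ decay of $G$ produce a negative power of $N$ after the dyadic summation (this is where $\alpha>1/2$, i.e.\ $2\alpha-2>-1$, is consumed). Two smaller imprecisions there: the non-vanishing condition is not ``$|k_{[12]}-k|_\infty\ge N/2$'' but rather ``$|k_{[12]}-k|_\infty>N/2$ \emph{or} $|k_1-k|_\infty>N/2$'' (the second alternative alone already breaks the cancellation through $\psi_\circ((k_1-k)^N,k_2)$), although both alternatives do force $|k_1|_\infty\simeq N$, which is all that is needed; and the $L^q_{x,y}$/Bernstein bookkeeping you sketch can be bypassed by the cruder bound $\|\tilde g^N_{i,j}(x,\cdot)\|_{L^\infty_y}\le\sum_k\E[|\CF\tilde g^N_{i,j}(x,\cdot)(k)|^2]^{1/2}$ followed by an $L^1_x$ integration, which already yields $\E[\|A_N^2\|_{L(\CC^\alpha_1,\CC^{2\alpha-2}_1)}]\lesssim N^{\alpha-1}$ and avoids the $p/2\ge q$ juggling.
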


\begin{proof}
We start by splitting the operator $A_N$ in two parts, $A_N=A_N^1+A_N^2$, where 
\begin{equation}
A_N^1(u)=\sum_{i, j} \int_{\T^2} \mathbb E[g^N_{i, j} (x, y)] \Delta_i u (y) \mathd y
\end{equation}
and 
\begin{equation}
A_N^2(u)=\sum_{i,j} \int_{\T^2} (g^N_{i,j}(x,y)-\mathbb E[g^N_{i, j} (x, y)]) \Delta_i u (y) \mathd y.
\end{equation}
To treat $A^2_N$ let us write $\tilde g^N_{i,j}(x,y) = g^N_{i,j}(x,y)-\mathbb E[g^N_{i, j} (x, y)]$ and observe that 
\begin{align*}
   \| A^2_N(u) \|_{\CC^\beta_1} & \le \| A^2_N (u) \|_{B^{\beta}_{1,1}} \lesssim \sum_{i,j} 2^{j \beta} \int_{\T^2}  \| \tilde g^N_{i, j} (x, y) \|_{L^\infty_y} \mathd x \| \Delta_i u \|_{L^1} \\
   & \le \sum_{i,j} 2^{j \beta} 2^{-i\alpha} \int_{\T^2}  \| \tilde g^N_{i, j} (x, y) \|_{L^\infty_y} \mathd x \| u \|_{\CC^\alpha_1},
\end{align*}
and therefore
\begin{equation}\label{eq:random operator bound pr 1}
   \E[ \| A^2_N \|_{L(\CC^\alpha_1,\CC^{\beta}_{1})}] \lesssim \sum_{i,j} 2^{j \beta} 2^{- i\alpha} \int_{\T^2} \E[\| \tilde g^N_{i, j} (x, y) \|_{L^\infty_y}] \mathd x.
\end{equation}
To control the expectation on the right hand side we apply the trivial bound
\[
   \mathbb E[\|\tilde g^N_{i,j}(x,\cdot)\|_{L^\infty}]\lesssim \sum_k\mathbb E[|\mathscr F(\tilde g^N_{i,j}(x,\cdot))(k)|] \le \sum_k\mathbb E\left[|\mathscr F(\tilde g^N_{i,j}(x,\cdot))(k)|^2\right]^{\frac{1}{2}}.
\]
At this stage let us observe that 
\begin{align*}
        & E\left[|\mathscr F(\tilde g^N_{i,j}(x,\cdot))(k)|^2\right]=\sum_{k_1,k_2 \in E_N}|\Gamma_{i,j}^N (x ; k, k_1, k_2)|^2 \frac{\1_{k_1 \neq 0}}{f(\varepsilon k_1)^2 |k_1|^4} \\
        &\hspace{50pt} \lesssim \sum_{k_1,k_2 \in E_N}\tilde{\rho}^2_i (k) \psi^2_{\prec} (k, k_1) \1_{k_1 \neq 0} |k_1|^{-4}\times \Big| e^{i\langle (k_{[12]} - k)^N, x\rangle } \rho_j ((k_{[12]} - k)^N) \psi_{\circ} ((k_1 -  k)^N, k_2) \\
     &\hspace{210pt} - e^{i\langle k_{[12]} - k, x\rangle } \rho_j (k_{[12]} - k) \psi_{\circ} (k_1 -  k, k_2) \1_{|k_{[12]} - k|\leqslant N/2} \Big|^2,
  \end{align*}
  and the difference on the right hand side is zero unless $|k_1|_\infty \simeq N$ so that $|k_1|^{-4}_\infty \simeq N^{-2+\lambda} |k_1|^{-2-\lambda}_\infty$ for any $\lambda > 0$. Moreover, we only have to sum over $|k_1|_\infty > |k|_\infty$ and the summation over $k_2$ gives $O(2^{2j})$ terms which leads to
  \begin{align*}
     \sum_k \Big( \sum_{k_1,k_2 \in E_N}|\Gamma_{i,j}^N (x ; k, k_1, k_2)|^2 \frac{\1_{k_1 \neq 0}}{f(\varepsilon k_1)^2 |k_1|^4} \Big)^{1/2} & \lesssim \sum_k \big( \1_{2^i,2^j \lesssim N} 2^{2j}  N^{-2+\lambda} \tilde{\rho}^2_i (k) |k|^{-\lambda})^{1/2} \\
     & \lesssim \1_{2^i,2^j \lesssim N} 2^{j}  N^{-1+\lambda/2} 2^{i(1-\lambda/2)}.
  \end{align*}
  Plugging this back into~\eqref{eq:random operator bound pr 1} we get for $\beta > - 1$ and $ \lambda/2 < 1 - \alpha$
  \[
     \E[ \| A^2_N \|_{L(\CC^\alpha_1,\CC^{\beta}_{1})}] \lesssim \sum_{i,j} 2^{j \beta} 2^{- i\alpha}  \1_{2^i,2^j \lesssim N} 2^{j}  N^{-1+\lambda/2} 2^{i(1-\lambda/2)} \lesssim N^{1 + \beta - \alpha}.
  \]
  Taking $\beta = 2 \alpha - 2$ (which is $>-1$ because $\alpha > 1/2$), the claim follows for $A^2_N$.

To handle $A_N^1$ let us remark that 
\begin{align*}
 &\mathbb E[g^N_{i,j}(x,y)]\\
 & =(2\pi)^{-4}\sum_{k}\tilde\rho_i(k)\rho_j(k)\left(\sum_{k_1\in E_N} \psi_{\prec} (k, k_1) \big[ \psi_{\circ}((k_1 - k)^N, k_1) -    \psi_{\circ}(k_1 - k, k_1)\big] \frac{\1_{k_1 \neq 0}}{f(\varepsilon k_1) |k_1|^2}\right)e^{2i\pi\langle y-x,k\rangle} \\
 &=: h_{i,j}(y-x),
\end{align*}
and as before we have 
\[
   \|A^1_N(u)\|_{\CC_{1}^{\beta}}\lesssim\sum_{i,j}2^{j\beta}\|h_{i,j}\ast\Delta_i u\|_{L^1}\lesssim \sum_{i,j}2^{j\beta}\|h_{i,j}\ast\Delta_i u\|_{L^2}.
\]
Now the Parseval identity gives  $\|h_{i,j}\ast\Delta_i u\|^2_{L^2}\simeq\sum_{k}|\CF h_{i,j}(k)|^2|\mathscr F(\Delta_iu)(k)|^2$, and
\begin{align*}
      |\CF  h_{i,j}(k)| = \Big| \sum_{k_1 \in E_N} \tilde{\rho}_i (k) \rho_j (k) \psi_{\prec} (k, k_1) \big[ \psi_{\circ}((k_1 - k)^N, k_1) -    \psi_{\circ}(k_1 - k, k_1)\big] \frac{\1_{k_1 \neq 0}}{f(\varepsilon k_1) |k_1|^2} \Big|,
   \end{align*}
   where we used that $|k|_\infty < N/2$ on the support of $\psi_{\prec} (\cdot, k_1)$. Now we use once more that $\psi_{\circ} ((k_1 -  k)^N, k_1) = \psi_{\circ} (k_1 -  k, k_1)$ unless $|k_1 - k|_\infty > N/2$ and $|k_1| \simeq N$, and that there are at most $N \times |k|_\infty$ values of $k_1$ with $|k|_\infty <|k_1|_\infty< N / 2$ and $| k_1 - k |_\infty > N / 2$. Therefore, the sum over $k$ is bounded by
   \[
     \sum_{k}|\CF h_{i,j}(k)|^2 |\mathscr F(\Delta_iu)(k)|^2\lesssim\1_{i\sim j}\1_{2^{i}\lesssim N}N^{-2} \sum_{|k|\sim 2^{i}}|k|^2_{\infty}|\mathscr F(\Delta_iu)(k)|^2\lesssim \1_{i\sim j}\1_{2^{i}\lesssim N}N^{-2}2^{2i}\|\Delta_i u\|^2_{L^2},
   \]
and now Bernstein's inequality, Lemma~2.1 of~\cite{Bahouri2011}, gives 
\[
\|h_{i,j}\ast\Delta_i u\|
_{L^2}\lesssim  \1_{i\sim j}\1_{2^{i}\lesssim N}N^{-1}2^{i}\|\Delta_i u\|_{L^2}\lesssim \1_{i\sim j}\1_{2^{i}\lesssim N}N^{-1}2^{2i}\|\Delta_i u\|_{L^1}.
\]
So finally we can conclude that 
\[
   \E[\|A^1_N\|_{L(\mathscr C_1^{\alpha},\CC^\beta_{1})}] \lesssim N^{-1}\sum_{i\sim j\lesssim \log_2N}2^{j\beta}2^{i(2-\alpha)},
\]
and taking $\beta = 2\alpha - 2$ we get
\[
  \E[\|A^1_N\|_{L(\mathscr C_1^{\alpha},\CC^\beta_{1})}] \lesssim N^{-1}\sum_{i\lesssim \log_2 N}2^{i\alpha}\lesssim N^{\alpha-1},
\]
which converges to zero as long as $\alpha<1$. This concludes the proof. 
\end{proof}

\section{Invariance principle for semi-discrete random polymer measures}\label{sec:polymer}
In \cite{CannizzaroChouk2015} the authors construct the continuous polymer measure with periodic white noise potential defined formally by
\begin{equation}
\mathbb Q_{T,x}(\mathrm d\omega)=Z^{-1}_{T,x} \exp\Big(\int_0^T\xi(\omega(s))\mathrm d s\Big)\mathbb W_x(\mathrm\omega),\quad Z_{T,x} = \E_{\mathbb W_x}\Big[\exp\Big(\int_0^T\xi(\omega(s))\mathrm d s\Big)\Big],
\end{equation} 
where $\mathbb W_x$ is the Wiener measure on $C([0,T], \T^2)$ starting in $x\in \T^2$. Of course, this formula does not really make sense since $\xi$ is only a Schwartz distribution and not a function and therefore the integral $\int_0^T \xi(\omega(s))\dd s$ is not well defined. However, it was shown in \cite{CannizzaroChouk2015} that replacing the white noise by a mollified version gives a sequence of probability measure $(\mathbb Q^\varepsilon_{T,x})$ which are equivalent to the Wiener measure and that this sequence converges in probability in the weak topology to a measure $\mathbb Q_{T,x}$ which does not depend on the way that we mollified the white noise, and which is almost surely singular with respect to the Wiener measure. Moreover, under the measures $(\mathbb Q_{T,x})_{x \in \T^2}$ the canonical process $(B_t)_{t\in[0,T]}$ on $C([0,T], \T^2)$ is an inhomogeneous strong Markov process with transition function
\begin{equation}\label{eq:polymer-sg}
   K_T(s,t)f(x)=\frac{u^{f,T-t}(t-s,x)}{u^1(T-s,x)}, \qquad 0 \le s \le t \le T,
\end{equation}
where $u^1$ and $u^{f,T-t}$ both satisfy the parabolic Anderson equation with initial condition given respectively by the constant function $1$ and $fu^1(T-t,\cdot)$. More precisely
\[
\partial_t u^{1}=\Delta u^{1}+u^{1} \diamond \xi,\qquad u^{1}(0,x)=1,
\]
and 
\[
\partial_t u^{f,s} =\Delta u^{f,s}+u^{f,s} \diamond \xi,\qquad u^{f,s}(0)=fu^1(s).
\]
Now let us come back to our discrete model and write $(B^N_t)_{t\ge 0}$ for the canonical process on the Skorokhod space $D([0,\infty), \Z_N^2)$ (which is equal to the space of continuous functions from $[0,\infty)$ to $\Z_N^2$ because $\Z_N^2$ is equipped with the discrete topology). We write $\tilde{\P}^N_x$ for the law of the continuous-time random walk with generator $\Delta_{\mathrm{rw}}$, started in $x \in \Z_N^2$. By Donsker's theorem we know that the law of $t \mapsto \varepsilon B^N_{\varepsilon^{-2} t}$ under $\tilde{\P}^N_{\varepsilon^{-1} x}$ converges to the Brownian motion on $\T^2$, started in $x$. Our aim is to derive an analogous result for the semi-discrete polymer measure (time is continuous, space discrete), given by
\[
   \tilde{\Q}^N_{T,x}(\dd \omega) = Z^{-1}_{N,T,x} \exp\left(\int_0^{\varepsilon^{-2}T} \varepsilon \eta_N(\omega(s)) \dd s\right) \tilde{\P}^N_x(\dd \omega),
\]
where $Z^{-1}_{N,T,x}$ is a constant renormalizing the mass of $Q^N_{T,x}$ to $1$. If we denote by $(\P^N_{x})_{x \in \T_N^2}$ the law of the rescaled process $(\varepsilon B^N_{\varepsilon^{-2} t})_{t\ge 0}$ under $\tilde{\P}^N_x$, then the law of $(\varepsilon B^N_{\varepsilon^{-2} t})_{t\in [0,T]}$ under $\tilde{\Q}^N_{T,x}$ is given by
\[
   \Q^N_{T,x} (\dd \omega) = Z^{-1}_{N,T,x} \exp\left(\int_0^{T} \xi_N(\omega(s)) \dd s\right) \P^N_x(\dd \omega),
\]
where now $\omega \in D([0,T], \T_N^2)$ and we recall that $\xi_N = \varepsilon^{-1} \mathcal{E}_N \eta_N(\cdot / \varepsilon)$, which here of course is only evaluated in the points of $\T_N^2$, so in fact there would have been no need to apply the extension operator $\mathcal{E}_N$. Now we claim that if we extend the measure $\mathbb Q^N_{T,x}$ to $\mathcal{B}(D([0,T],\mathbb T^2))$ by setting
\[
   \overline{\mathbb Q}^N_{x,T}(A)=\mathbb Q^N_{x,T}(A\cap D([0,T],\mathbb T^2_N)),
\] 
then $(\overline{\mathbb Q}_{x,T}^N)$ converges in distribution in the weak topology to $\mathbb Q_{x,T}$.

\begin{theorem}\label{thm:polymer}
   Make the assumptions (H$_{\mathrm{rw}}$) and (H$_{\mathrm{mart}}$) and let $T>0$. Let for all $x \in \T^2$ and $N \in \N$ the point $\lfloor x \rfloor_N\in \T_N^2$ be such that $|x- \lfloor x \rfloor_N| \le \varepsilon$. Then the family of probability measures $(\overline{\Q}^N_{T,\lfloor x\rfloor_N})_{x \in \T^2}$ converges jointly in distribution in the weak topology to $(\Q_{T,x})_{x \in \T^2}$.
\end{theorem}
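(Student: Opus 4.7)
The plan is to reduce the weak convergence of the random polymer measures to (i) convergence of their finite--dimensional marginals, which we express through a Feynman--Kac formula driven by the discrete parabolic Anderson model, and (ii) tightness on path space; Theorem~\ref{thm:main} then supplies (i).

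First, I would derive the discrete analogue of~\eqref{eq:polymer-sg}. Writing $\tilde\E^N_x$ for expectation under $\tilde\P^N_x$, a direct computation gives that the partition function equals
\[
   Z_{N,T,x} = \tilde \E^N_x\Big[\exp\Big(\int_0^{\varepsilon^{-2}T} \varepsilon \eta_N(B^N_s)\,\dd s\Big)\Big] = e^{T c_N}\, u_N^1(T,x),
\]
where $u_N^1$ is the solution of~\eqref{eq:discrete pam} with initial condition $\equiv 1$. Combining this with the Markov property of $B^N$ one shows that under $\Q^N_{T,x}$ the canonical process is inhomogeneous Markov with transition operator
\[
   K^N_T(s,t)f(y) = \frac{u_N^{f,T-t}(t-s,y)}{u_N^1(T-s,y)},
\]
where $u_N^{f,s}$ is the solution of the discrete PAM with initial condition $f\cdot u_N^1(s,\cdot)$. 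This exactly mirrors the continuous formula, so finite-dimensional marginals of $\overline\Q^N_{T,\lfloor x\rfloor_N}$ reduce to ratios and compositions of the operators $K^N_T(s,t)$ acting on smooth test functions.

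To pass to the limit in finite-dimensional marginals, I would apply Theorem~\ref{thm:main} together with the Skorokhod representation to produce a single probability space on which $(\xi_N, X_N\reso\xi_N-c_N, A_N)$ converges almost surely. Since the equation is linear, the map from an admissible initial condition to the paracontrolled solution is jointly continuous in the data and the initial condition; applying this with $v^0_N\equiv 1$ gives $u_N^1\to u^1$ in $C([0,T],B^0_{1,\infty})$ almost surely. For the numerator we must check that $f\cdot u_N^1(s,\cdot)$ satisfies (H$_{\mathrm{init}}$); the paracontrolled a priori bounds of Section~\ref{sec:paracontrolled} give $u_N^1(s,\cdot)\in \CC^\alpha_1$ uniformly in $N$ for $s>0$, so multiplication by continuous $f$ is well defined and convergent, and a second application of Theorem~\ref{thm:main} yields $u_N^{f,s}\to u^{f,s}$ jointly. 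Dividing by $u_N^1(T-s,\lfloor x\rfloor_N)$ — which is strictly positive since it is a random--walk expectation of a positive quantity, and whose limit $u^1(T-s,x)$ is strictly positive by the Feynman--Kac representation from~\cite{CannizzaroChouk2015} — we obtain pointwise convergence of all finite--dimensional transition kernels, jointly in $x\in \T^2$ thanks to the spatial continuity provided by the $B^0_{1,\infty}$ topology.

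The remaining step is tightness in $\mathcal P(D([0,T],\T^2))$ of $(\overline\Q^N_{T,\lfloor x\rfloor_N})$ along subsequences where the stochastic data have converged. I would apply Aldous' criterion: for any stopping time $\tau\le T-h$,
\[
   \overline\Q^N_{T,\lfloor x\rfloor_N}\big(|B^N_{\tau+h} - B^N_\tau|\ge \delta\big) \le \frac{\|u_N^1\|_{L^\infty([0,T]\times\T^2)}}{\inf_{y,s\le T} u_N^1(s,y)} \, \P^N_x\big(|B^N_{\tau+h}- B^N_\tau|\ge \delta\big),
\]
and the right--hand side can be bounded by Donsker's theorem for the rescaled random walk, provided we have uniform--in--$N$ upper bounds on $\|u_N^1\|_{L^\infty}$ and positive uniform lower bounds on $u_N^1$. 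The upper bound follows from Lemma~\ref{lem:besov embedding} applied to the a priori estimates in $\LL^{\gamma,\alpha}_1$ combined with the Besov embedding, and the lower bound I would obtain by writing $u_N^1 = \exp(\log u_N^1)$ and solving the discrete Cole--Hopf--type equation, or by a direct Feynman--Kac estimate splitting $\int \varepsilon\eta_N$ into a discrete martingale plus its renormalized drift $t c_N$. Combining tightness with the finite-dimensional convergence and identifying the limit via the semigroup $K_T(s,t)$ completes the proof.

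The main obstacle is the uniform strict positivity of $u_N^1(s,y)$ away from zero on $[0,T]\times \T^2$: the paracontrolled scheme produces Besov--type regularity bounds but no lower bound, so this has to be extracted by Feynman--Kac arguments at the discrete level, sufficiently quantitative to survive the limit $N\to\infty$ and close the tightness estimate.
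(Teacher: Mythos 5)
The first half of your argument — the Feynman--Kac computation of the partition function, the identification of the discrete transition operators $K^N_T(s,t)f = u_N^{T-t,f}(t-s,\cdot)/u_N^1(T-s,\cdot)$, and the reduction of their convergence to Proposition~\ref{prop:deterministic discrete limit} via Skorokhod representation — is exactly what the paper does. The gap is in your tightness step. First, the displayed inequality
$\overline\Q^N_{T,\lfloor x\rfloor_N}(A) \le \frac{\|u_N^1\|_{L^\infty}}{\inf u_N^1}\,\P^N_x(A)$
is false as stated: the Radon--Nikodym density of $\Q^N_{T,x}$ with respect to $\P^N_x$ on $\mathcal F_{\tau+h}$ is
$Z^{-1}_{N,T,x}\,e^{\int_0^{\tau+h}\xi_N(B^N_r)\,\dd r}\,e^{(T-\tau-h)c_N}\,u^1_N(T-\tau-h,B^N_{\tau+h})$,
and the exponential weight $\exp(\int_0^{\tau+h}\xi_N(B^N_r)\,\dd r - (\tau+h)c_N)$ along the path is in no way controlled by the ratio $\sup u_N^1/\inf u_N^1$; since $\xi_N$ has $L^\infty$ norm of order $\varepsilon^{-1}$, this factor cannot be absorbed pathwise. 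Second, even after repairing this by conditioning on $\mathcal F_\tau$ and working with the kernels $K^N_T$, you would still need the uniform-in-$N$ strict positivity $\inf_{s\le T,y} u_N^1(s,y)\ge c>0$, which you correctly identify as the main obstacle but do not prove; the paracontrolled machinery gives no such lower bound, and extracting it from a quantitative discrete Feynman--Kac estimate is a substantial piece of work that your proposal only sketches.

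The paper avoids this entirely by invoking the Ethier--Kurtz convergence theorem for Markov semigroups (Lemma~\ref{lem:EK}, applied to the time-homogenized pair $(Y^N_t,t)$): weak convergence in $D([0,T],\T^2)$ follows \emph{directly} from the $L^\infty$-convergence $\|K^N_T(s,t)f - K_T(s,t)f\|_{L^\infty(\T^2_N)}\to 0$ plus convergence of the initial laws, with no separate tightness argument. Crucially, for the convergence of the ratio $u_N^{T-t,f}/u_N^1$ one then only needs strict positivity of the \emph{limit} $u^1(T-s,\cdot)$ (which is part of the construction in~\cite{CannizzaroChouk2015}); once $u^1\ge c>0$ and $u_N^1\to u^1$ uniformly, the discrete denominators are automatically bounded below for large $N$. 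Replacing your Aldous-criterion step by this semigroup-convergence argument would close the gap; as written, the proof is incomplete.
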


\begin{proof}
   
   As explained in the proof of Theorem~\ref{thm:main} we may assume that $(\xi_N, X_N, X_N \diamond \xi_N, A_N)$ converges in probability to $(\xi, X, X \diamond \xi, 0)$ in $\CC_1^0 \times \CC^\alpha_\infty \times \CC^{2\alpha-2}_\infty \times L(\CC_1^\alpha, \CC_1^{2\alpha-2})$. Let us show that then for any $x \in \T^2$ the measures $(\overline{\Q}^N_{T,\lfloor x\rfloor_N})$ that are constructed from $(\xi_N, X_N, X_N \diamond \xi_N)$ as described above converge in probability to the polymer measure $\Q_{T,x}$ constructed from $(\xi, X, X \diamond \xi)$. For this it suffices to show that every subsequence possesses a subsequence for which the convergence holds, and in this way we may suppose that the data $(\xi_N, X_N, X_N \diamond \xi_N, A_N)$ converges almost surely (because it converges in probability and thus almost surely along a subsequence). Now we simply apply Lemma~\ref{lem:EK} in the appendix with $E=\mathbb T^2$, $E_N = \mathbb T^2_N$ and $\psi_N(x)=x$ for $x\in\mathbb T_N^2$. Moreover, if $Y^N$ denotes the process with law $\mathbb Q^N_{T, \lfloor x\rfloor_N}$, then the law of $X^{N}$ is $\overline{\mathbb Q}^N_{T, \lfloor x \rfloor_N}$. Of course, Lemma~\ref{lem:EK} is only formulated for temporally homogeneous Markov processes, but we can use the standard trick of considering the couple $(Y^N_t,t)_{t\in[0,T]}$ to obtain a temporally homogeneous process. Therefore, we get from Lemma~\ref{lem:EK} in the appendix that $(\overline{\Q}^N_{T,\lfloor x\rfloor_N})$ converges weakly to $\Q_{T,x}$, provided that
\begin{equation}\label{eq:group-conv}
   \lim_{N \to \infty} \|K^N_T(s,t)f-K_T(s,t) f\|_{L^{\infty}(\mathbb T^2_N)} = 0
\end{equation}
for all $f \in C(\mathbb T^2, \R)$ and $0 \le s < t \le T$, where $(K^N_T(s,t))_{0\le s \le t \le T}$ denotes the transition function of $Y^N$. But by the Bayes formula we have for any $f \colon \T_N^2 \to \R$, $0 \le s \le t \le T$, and $x \in \T_N^2$
\[
   \E_{\Q_{T,x}^N}[f(B^N_t) | \mathcal{F}_s] = \frac{\E_{\P_{x}^N}[f(B^N_t) Z^{-1}_{N,T,x} \exp(\int_0^{T} \xi_N(B^N_r) \dd r) | \mathcal{F}_s] }{\E_{\P_{x}^N}[Z^{-1}_{N,T,x} \exp(\int_0^{T} \xi_N(B^N_r) \dd r) | \mathcal{F}_s] },
\]
where $(\mathcal{F}_t)_{t\in [0,T]}$ is the filtration generated by $B^N$. The deterministic factor $Z^{-1}_{N,T,x}$ cancels, as well as the contribution $ \exp(\int_0^{s} \xi_N(B^N_r) \dd r)$ which we can pull out of both conditional expectations. By the Markov property of $B^N$ under $\P^N_x$, the remaining contribution of the denominator is then given by
\[
   \E_{\P^N_{B^N_s}}\Big[\exp\Big(\int_0^{T-s} \xi_N(B^N_r) \dd r\Big)\Big] = u_N^1(T-s,B^N_s),
\]
where we applied the Feynman-Kac formula and where
\[
    \partial_t u_N^1 = \Delta_{\mathrm{rw}}^N  u_N^1 + u_N^1 \xi_N - c_N u^1_N,\qquad u_N^1(0) \equiv 1.
\]
Similarly, the remaining contribution of the numerator is
\begin{align*}
   \E_{\P_{B^N_s}^N} \Big[f(B^N_{t-s}) \exp\Big(\int_0^{T-s} \xi_N(B^N_r) \dd r\Big)\Big] & = \E_{\P_{B^N_s}^N} \Big[ e^{\int_0^{t-s} \xi_N(B^N_r) \dd r} f(B^N_{t-s}) \E_{\P_{B^N_{t-s}}^N} [ e^{\int_0^{T-t} \xi_N(B^N_r) \dd r}]\Big]\\
   & = \E_{\P_{B^N_s}^N} \Big[ e^{\int_0^{t-s} \xi_N(B^N_r) \dd r} f(B^N_{t-s}) u^1_N(T-t,B^N_{t-s})\Big] \\
   & = u^{T-t,f}_N(t-s, B^N_{s}),
\end{align*}
where
\[
    \partial_t u_N^{r,f} = \Delta_{\mathrm{rw}}^N  u_N^{r,f} + u_N^{r,f} \xi_N - c_N  u^{r,f}_N,\qquad u_N^{r,f}(0) = f u^1_N(r).
\]
Consequently the transition function $(K^N_T(s,t))_{0\le s \le t \le T}$ of $B^N$ under $(\Q^N_{T,x})_{x \in \T_N^2}$ is given by
\[
   K^N_T(s,t) f(x) = \frac{u^{T-t,f}_N(t-s, x)}{u_N^1(T-s,x)}.
\]
Combining this with the representation~\eqref{eq:polymer-sg} for $(K_T(s,t))$, the claimed convergence now follows from Proposition~\ref{prop:deterministic discrete limit}.
\end{proof}

\section{Invariance principle for the spectrum of a random Schr\"odinger operator}\label{sec:schroedinger}

In the recent paper~\cite{AllezChouk2015} the random Schr\"odinger operator with white noise potential defined formally by
$$
\mathscr H=-\Delta+ (\xi+\infty)
$$
was constructed for the first time. It was shown that it is the limit (in resolvent sense) of the sequence of operators $\mathscr H^\delta=-\Delta+\xi_\delta+c_\delta$, $\delta > 0$, where $\xi_\delta$ is a mollification of the white noise and $c_\delta =\frac{1}{2\pi}\log(\frac{1}{\delta})+O(1)$ is the diverging constant appearing in~\eqref{eq:c-delta}. Moreover, it was shown that the operator $\mathscr H$ has a compact resolvent (in $L^2(\mathbb T^2)$) and a pure point spectrum 
$$
\sigma(\mathscr H)=\{\Lambda_1\leq\Lambda_2\leq\dots\leq\Lambda_k\leq \dots \},
$$ 
where $\Lambda_k\to+\infty$ for $k\to\infty$.

Now consider the operator $\mathscr H_N$ defined on the periodic lattice $\mathbb Z_N^2$ by  
$$
\mathscr H_Ne(i)=-(\Delta_{\mathrm{rw}}e)(i)+\eps e(i)\eta_N(i), \qquad i\in\mathbb Z_N^2,
$$ 
where $\eta_N$ is as in the previous sections. Given the results we derived so far it is natural to expect that the spectrum of $\mathscr H_N$ converges to that of $\mathscr H$, at least when suitably rescaled and recentered. To be precise we are interested in the convergence at the bottom of the spectrum of $\mathscr H_N$. That is, we fix $k\in\mathbb N$ and let $N\gg k$, we denote by $\Lambda_1^N\leq\dots\leq\Lambda_k^N$ the $k$ lowest eigenvalues of the operator $\mathscr H_N$, and our aim is to show the joint convergence of $(\Lambda_1^N,\dots, \Lambda_k^N)$. As usual in spectral analysis instead of studying this convergence directly we will prove that the rescaled resolvent of the operator $\mathscr H_N$  satisfies a central limit theorem which implies in particular the central limit theorem for the eigenvalues. Indeed, let us start by observing that if $e^k_N$ is an eigenfunction with eigenvalue $\Lambda^N_k$ and $\tilde e_N^k(i)=e_N^k(i/\eps)$ for $i\in\mathbb T^2_N$, then $\mathcal E_N\tilde e_N^k$  is an eigenfunction of the operator 
$$
\tilde{\mathscr H_N}f=-\Delta_{\mathrm{rw}}^N \PC_N f+\Pi_N(\PC_N (f)\xi_N),\qquad f\in L^2(\mathbb T^2),
$$
 with eigenvalue $\varepsilon^{-2}\Lambda_k^N$, where we recall that $\PC_N f = \CF^{-1} (\1_{(-N/2,N/2)^2} \CF f)$. Now the convergence of the eigenvalues of this self adjoint operator is implied by the convergence of its resolvent operator $(z+\tilde{\mathscr H_N})^{-1}$ for $z\in i\mathbb R \setminus \{0\}$. But as we have seen previously such a convergence can only be expected to hold after a suitable renormalization, and taking this into account we should study the operator $\tilde{\mathscr H_N}+c_N \mathcal{P}_N$ instead of $\tilde{\mathscr H_N}$. To prove the convergence of its resolvent (in the operator sense) it suffices to show that $g_N=(z+\tilde{\mathscr H_N}+ c_N \mathcal{P}_N)^{-1}f$ converges to $(z+\mathscr H)^{-1}f$ uniformly in $f\in L^2(\mathbb T^2)$ with $\|f\|_{L^2}=1$. For that purpose let us start by observing that $\PC_N g_N$ satisfies the equation 
$$
(z-\Delta_{\mathrm{rw}}^N)\PC_N g_N=\PC_N f - \Pi_N(\PC_N (g_N) \xi_N) - c_N \PC_N g_N,
$$  
whereas $(1- \PC_N) g_N = z^{-1} (1-\PC_N) f$. From here the convergence of $(g_N)$ can be established in the same manner as the convergence in Proposition~\ref{prop:deterministic discrete limit} (see~\cite{AllezChouk2015}, Proposition~4.13 and Lemma~4.15 for details on the resolvent equation in the paracontrolled framework), and in that way we obtain the following result.

\begin{proposition}
   Make assumptions (H$_{\mathrm{rw}}$) and (H$_{\mathrm{mart}}$). Let $\alpha \in (2 / 3, 1-2/p)$ and $z\in i\mathbb R \setminus\{0\}$. Then 
  the resolvent operator $(z+c_N+\tilde{\mathscr H_N})^{-1}$ converges in distribution in the space of bounded operators $L(L^2,H^{\alpha})$ to $(z+\mathscr H)^{-1}$.  
  \end{proposition}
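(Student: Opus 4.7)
The plan is to transport the convergence argument of Proposition~\ref{prop:deterministic discrete limit} from the parabolic to the elliptic (resolvent) setting, combining it with the elliptic paracontrolled framework developed in~\cite{AllezChouk2015}. By the Skorokhod representation theorem, exactly as in the proof of Theorem~\ref{thm:main}, we may work on a probability space on which the enhanced noise $(\xi_N, X_N, X_N \reso \xi_N - c_N)$ converges almost surely in $\CC^{\alpha-2}_\infty \times \CC^\alpha_\infty \times \CC^{2\alpha-2}_\infty$ to $(\xi, X, X \diamond \xi)$, and on which the random operator $A_N$ of~\eqref{eq:random operator} tends to $0$ in $L(\CC^\alpha_1, \CC^{2\alpha-2}_1)$ (by Corollary~\ref{cor:stoch data convergence} and Lemma~\ref{lem:random operator bound}). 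It then suffices to prove almost sure convergence of $g_N f := (z + c_N \PC_N + \tilde{\mathscr H}_N)^{-1} f$ to $(z + \mathscr H)^{-1} f$ in $H^\alpha$, uniformly over $\|f\|_{L^2} \le 1$.

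First I would exploit that $-\Delta_{\mathrm{rw}}^N$ commutes with $\PC_N$ and that the ranges of both $c_N \PC_N$ and of $\Pi_N(\cdot\,\xi_N)$ are contained in $\PC_N L^2$; this yields the orthogonal splitting $g_N = \PC_N g_N + z^{-1}(1-\PC_N) f$ and reduces the analysis to the low-frequency component $h_N := \PC_N g_N f$, which solves
\[
   (z - \Delta_{\mathrm{rw}}^N + c_N) h_N + \Pi_N(h_N\, \xi_N) = \PC_N f.
\]
On this equation one would make the elliptic paracontrolled ansatz $h_N = h_N \mpara X_N + h_N^\sharp$ in the spirit of~\cite{AllezChouk2015}, Proposition~4.13. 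Expanding $\Pi_N(h_N\,\xi_N)$ with Bony's trichotomy and inserting the ansatz, the same cancellation mechanism as in Section~\ref{sec:paracontrolled} produces an equation for $h_N^\sharp$ whose right-hand side is controlled through the paraproduct and commutator bounds (Lemmas~\ref{lem:modified paraproduct exp} and~\ref{lem:shuffle laplacian commutator}, Remark~\ref{rmk:PiN on product}), the renormalized resonant term $X_N \reso \xi_N - c_N$, and the operator norm of $A_N$.

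The elliptic Schauder estimates for $(z - \Delta_{\mathrm{rw}}^N + c_N)^{-1}$ on distributions spectrally supported in $E_N$ follow directly from the lower bound $f(\varepsilon k) \ge c_f > 0$ together with $\mathrm{Im}(z) \neq 0$, and they play the role of Lemma~\ref{lemma:schauder exp} in the present setting. With their help one closes a fixed-point argument for $h_N^\sharp$ in the elliptic paracontrolled space, obtaining uniform a priori bounds of the form $\|h_N\|_{\CC^\alpha_1} + \|h_N^\sharp\|_{\CC^{2\alpha}_1} \lesssim_z \|f\|_{L^2}$. The continuity of the elliptic paracontrolled solution map in the enhanced data, which is the content of Lemma~4.15 of~\cite{AllezChouk2015} in the continuous setting and passes to the discrete one by the same manipulations, then yields the almost sure convergence of $h_N$ to $(z+\mathscr H)^{-1} f$ uniformly in $f$, concluding the proof. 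The main obstacle I anticipate is the delicate interplay between the Fourier cutoff $\PC_N$, the shuffle operator $\Pi_N$ and the modified paraproduct in the elliptic scale of Besov spaces, together with the uniformity in $f$; however all the necessary analytic pieces are already in place, either in Section~\ref{sec:paracontrolled} or in~\cite{AllezChouk2015}, so that the argument reduces to careful bookkeeping of known estimates.
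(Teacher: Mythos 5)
Your proposal is correct and follows essentially the same route as the paper: splitting $g_N$ into $\PC_N g_N$ (which solves the renormalized discrete resolvent equation) and $(1-\PC_N)g_N = z^{-1}(1-\PC_N)f$, then running the elliptic paracontrolled argument of~\cite{AllezChouk2015} (Proposition~4.13 and Lemma~4.15) with the discrete estimates of Section~\ref{sec:paracontrolled} and the a.s.\ convergence of the enhanced data obtained via Skorokhod representation. Your write-up is in fact more detailed than the paper's own sketch, which simply points to those references.
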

As previously discussed this immediately yields the following corollary for the eigenvalues.

\begin{theorem}\label{thm:schroedinger}
Fix $k\in\mathbb N$ and let $N \gg k$. Let $\Lambda_1^N\leq \dots \leq\Lambda_k^N$ be the $k$ smallest eigenvalues of the operator $\mathscr H_N$ and $\Lambda_1 \le \dots \le \Lambda_k$ those of $\mathscr H$. Then for $N \to \infty$ the following convergence holds in distribution:
\[
   \varepsilon^{-2}(\Lambda_1^N,\dots,\Lambda_k^N)+c_N(1,\dots,1)\Rightarrow (\Lambda_1,\dots,\Lambda_k).
\]
\end{theorem}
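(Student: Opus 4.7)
The plan is to deduce the convergence of the eigenvalues from the resolvent convergence of the preceding proposition via a Riesz projection argument, adapted to handle the trivial infinite-dimensional subspace $(1-\mathcal{P}_N)L^2$ on which $\tilde{\mathscr{H}}_N + c_N \mathcal{P}_N$ vanishes identically. As a first step, a Skorokhod representation argument analogous to the one used in the proof of Theorem~\ref{thm:main} allows us to work on a new probability space on which the convergence $(z + c_N \mathcal{P}_N + \tilde{\mathscr{H}}_N)^{-1} \to (z + \mathscr{H})^{-1}$ holds almost surely in $L(L^2, H^\alpha)$ for a countable dense set of $z \in i\R \setminus \{0\}$. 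Using the resolvent identity $R(z_1) - R(z_2) = (z_2 - z_1)R(z_1) R(z_2)$ together with a Neumann series expansion and a connectivity argument, this extends to uniform (in $z$) convergence on any compact subset of the resolvent set of $\mathscr{H}$, which almost surely equals $\C \setminus \{\Lambda_i\}_{i \in \N}$ since $\mathscr{H}$ has compact resolvent by \cite{AllezChouk2015}.

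For each $j \le k$, almost surely choose a small positively oriented circle $\gamma_j \subset \C$ enclosing $\Lambda_j$ but no other eigenvalue of $\mathscr{H}$ and not enclosing the origin, and consider the Riesz projections
\[
   P^j = \frac{1}{2\pi i} \oint_{\gamma_j} (\zeta - \mathscr{H})^{-1} \dd \zeta, \qquad P_N^j = \frac{1}{2\pi i} \oint_{\gamma_j} (\zeta - \tilde{\mathscr{H}}_N - c_N\mathcal{P}_N)^{-1} \dd \zeta.
\]
The uniform convergence from the first step, combined with the compact embedding $H^\alpha \hookrightarrow L^2$, gives $\|P_N^j - P^j\|_{L(L^2, L^2)} \to 0$ almost surely. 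Standard perturbation theory (Kato) then implies that $P_N^j$ is a projection of the same finite rank as $P^j$ for $N$ large, and this rank equals the multiplicity of $\Lambda_j$. Moreover, the restriction of $P_N^j$ to $(1-\mathcal{P}_N)L^2$ vanishes by Cauchy's theorem, since on this subspace $(\zeta - \tilde{\mathscr{H}}_N - c_N\mathcal{P}_N)^{-1} = \zeta^{-1}\mathrm{Id}$ and the origin is not enclosed by $\gamma_j$. Hence $P_N^j$ projects onto an eigenspace of $\tilde{\mathscr{H}}_N + c_N \mathcal{P}_N$ lying inside the finite-dimensional space $\mathcal{P}_N L^2$, whose eigenvalues are of the form $\varepsilon^{-2}\Lambda_i^N + c_N$. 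Applying this jointly for $j=1,\dots,k$ with pairwise disjoint $\gamma_j$, together with an analogous contour argument on a large left-half-plane contour to rule out spurious eigenvalues escaping below $\Lambda_1$, yields the joint convergence of the $k$ smallest $\varepsilon^{-2}\Lambda_i^N + c_N$ to $(\Lambda_1, \dots, \Lambda_k)$.

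The main technical obstacle is the extension of the resolvent convergence from $i\R\setminus\{0\}$, where the preceding proposition provides it, to a neighborhood of the real line where the contours $\gamma_j$ must live. This requires uniform-in-$N$ $L(L^2, L^2)$ operator norm bounds on $(z + c_N\mathcal{P}_N + \tilde{\mathscr{H}}_N)^{-1}$ for $z$ in such a neighborhood, which in turn requires revisiting the paracontrolled elliptic estimates underlying the preceding proposition and carefully tracking their dependence on $z$. A closely related subtlety is the need for a uniform-in-$N$ lower bound on the spectrum of $\tilde{\mathscr{H}}_N + c_N \mathcal{P}_N$ restricted to $\mathcal{P}_N L^2$; without it, the $k$-th smallest eigenvalue could a priori escape to $-\infty$, invalidating the identification of the smallest eigenvalues. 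Such a lower bound is itself a consequence of the uniform resolvent bound on a contour far in the left half-plane.
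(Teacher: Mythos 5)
Your route is the same as the paper's: the paper deduces the theorem directly from the norm resolvent convergence of the preceding proposition for self-adjoint operators, and your Riesz-projection argument (including the observation that the trivial block on $(1-\mathcal{P}_N)L^2$ only contributes a pole at the origin, which the contours $\gamma_j$ avoid) is the standard way of making that implication precise. Two remarks on the technical discussion in your last paragraph, one of which is a genuine gap.

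First, you overstate the difficulty of moving the convergence from $i\R\setminus\{0\}$ to the contours $\gamma_j$. All operators involved are self-adjoint, so $\|(\zeta-A_N)^{-1}\|_{L(L^2,L^2)}=\mathrm{dist}(\zeta,\sigma(A_N))^{-1}$, and the chaining/Neumann-series argument you describe is self-improving: each step yields $\mathrm{dist}(\zeta_0,\sigma(A_N))\to\mathrm{dist}(\zeta_0,\sigma(\mathscr{H}))$, so the next disk in the chain has radius essentially $\mathrm{dist}(\zeta_0,\sigma(\mathscr{H}))$, and any compact subset of the resolvent set of $\mathscr{H}$ is reached in finitely many steps. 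No revisiting of the paracontrolled elliptic estimates or tracking of $z$-dependence is needed for this part.

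Second, the escape-to-$-\infty$ issue is real, but your proposed fix does not close it. A uniform-in-$N$ resolvent bound on a fixed compact contour in the left half-plane only shows that $\sigma(A_N)$ avoids a neighbourhood of that contour and the region it separates from $\sigma(\mathscr{H})$; it cannot exclude spectrum further to the left. For instance, $A_N=\mathrm{diag}(-N,\Lambda_1,\Lambda_2,\dots)$ has uniformly bounded resolvent on any fixed line $\mathrm{Re}\,\zeta=-R$ even though its smallest eigenvalue runs off to $-\infty$; likewise, norm resolvent convergence at $z=i$ is compatible with $\inf\sigma(A_N)\to-\infty$, since the corresponding points $(\mu_N-i)^{-1}$ merely accumulate at $0\in\sigma((\mathscr{H}-i)^{-1})$. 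What is actually needed is a bound uniform over the whole half-line $(-\infty,-R_0]$ for some $N$-independent (random but tight) $R_0$, equivalently a quadratic-form lower bound $\langle A_Nf,f\rangle\ge -R_0\|f\|_{L^2}^2$ on $\mathcal{P}_NL^2$. This does hold, but it must be extracted from the paracontrolled construction of the resolvent itself: the fixed-point map defining $(a+A_N)^{-1}$ is a contraction, with norm bounds uniform in $N$, for all $a\ge R_0(\Xi_N)$ with $R_0$ depending polynomially on the norms of the enhanced noise, and these norms are tight by the moment bounds of Section~\ref{sec:martingales}. With that ingredient supplied in place of the contour argument, your proof is complete and coincides with the paper's (largely unwritten) one.
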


\appendix

\section{A criterion for the weak convergence of Markov processes}

\begin{lemma}[\cite{Ethier1986}, Theorem 2.11 in Chapter 4]\label{lem:EK}
Let $E$ and $(E_N)_{N \in \N}$ be metric spaces such that $E$ is compact and separable and assume that for all $N$ we are given a measurable map $\psi_N \colon E_N \to E$ and a semigroup $(P_N(t))_{t \in [0,T]}$ of a Markov process $Y_N$ on $E_N$, such that $X_N = \psi_N (Y_N)$ has sample paths in $D([0,T],E)$. Assume also that there exists a Feller semigroup $(P(t))_{t \in [0,T]}$ such that 
\[
  \lim_{N \rightarrow 0} \|P_N(t)\pi_Nf - \pi_N P(t) f\|_{L^{\infty}} = 0
\]  
for every $f\in C(E, \R)$, where $\pi_N \colon L^{\infty}(E)\to L^{\infty}(E_N)$ is defined by the relation $\pi_N f (x) =f(\psi_N(x))$, $x \in E_N$. Then if $X_N(0)$ has a limiting probability distribution $\nu$ on $E$, the process $(X_N)$ converges in distribution in $D([0,T],E)$ to the Markov process $X$ starting at $\nu$ with semigroup $(P(t))_{t \in [0,T]}$.
\end{lemma}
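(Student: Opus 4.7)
The plan is to follow the standard Ethier--Kurtz strategy in two steps: first, establish convergence of all finite-dimensional marginals of $(X_N)$ to those of the Markov process $X$ with semigroup $(P(t))$ and initial law $\nu$; second, establish tightness of $(X_N)$ in $D([0,T], E)$. Combining the two then yields the claimed weak convergence, since the finite-dimensional distributions of $X$ determine its law on $D([0,T],E)$.

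For the finite-dimensional marginals, fix $0 \le t_1 < \dots < t_n \le T$ and $f_1,\dots,f_n \in C(E,\R)$. Using the Markov property of $Y_N$ together with the identity $\pi_N(fg) = \pi_N f \cdot \pi_N g$, I would write
\[
   \E[f_1(X_N(t_1)) \cdots f_n(X_N(t_n))] = \E\bigl[(P_N(t_1) F_N^{(1)})(Y_N(0))\bigr]
\]
with $F_N^{(n)} = \pi_N f_n$ and $F_N^{(k)} = \pi_N f_k \cdot P_N(t_{k+1}-t_k) F_N^{(k+1)}$ for $k < n$. A downward induction on $k$ that at each step combines the semigroup convergence hypothesis with the Feller property $P(t) C(E) \subset C(E)$ and uniform boundedness of the test functions then shows that $F_N^{(k)}$ is close in $L^\infty(E_N)$ to $\pi_N F^{(k)}$, where $F^{(k)}$ is the analogous quantity built from $(P(t))$ and $C(E)$-valued throughout. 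Together with the assumption $X_N(0) \Rightarrow \nu$ and continuity of $F^{(1)}$, this gives convergence of $\E[f_1(X_N(t_1)) \cdots f_n(X_N(t_n))]$ to $\E_\nu[f_1(X(t_1)) \cdots f_n(X(t_n))]$.

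For tightness, compact containment is automatic because $E$ is compact. The remaining ingredient is a modulus-of-continuity estimate, which I would verify by Aldous's criterion. For stopping times $\tau_N \le \sigma_N \le (\tau_N + \delta)\wedge T$ adapted to the natural filtration of $Y_N$ and any $f \in C(E)$, the Markov property gives
\[
   \E[f(X_N(\sigma_N)) - f(X_N(\tau_N)) \mid \mathcal{F}_{\tau_N}] = \bigl(P_N(\sigma_N - \tau_N)\pi_N f - \pi_N f\bigr)(Y_N(\tau_N)).
\]
The right-hand side is bounded in sup norm by $\|P_N(\sigma_N - \tau_N)\pi_N f - \pi_N P(\sigma_N - \tau_N) f\|_\infty + \|\pi_N(P(\sigma_N - \tau_N) f - f)\|_\infty$; the first summand vanishes by the semigroup convergence assumption and the second is small uniformly in $(\tau_N, \sigma_N)$ as $\delta \to 0$ by strong continuity of the Feller semigroup $(P(t))$. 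Applying this to a countable dense subset of $C(E)$ (which exists since $E$ is compact metric and hence $C(E)$ is separable) and using that such a family separates points of $E$ yields Aldous's criterion for the real-valued processes $(f(X_N))$, and from here tightness of $(X_N)$ in $D([0,T], E)$ follows by the standard Jakubowski-type argument.

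The main obstacle is the tightness step: one has to pass from scalar control of increments of $f(X_N)$ for $f \in C(E)$ to genuine tightness in the Skorokhod space $D([0,T], E)$, which requires invoking a criterion that exploits the separability of $C(E)$ and the fact that bounded continuous functions separate points of a compact metric space. Once tightness is established, any subsequential weak limit is a càdlàg process whose finite-dimensional distributions coincide with those of the Markov process $X$ constructed from the Feller semigroup $(P(t))$ and initial distribution $\nu$; this identifies the limit uniquely and concludes the proof.
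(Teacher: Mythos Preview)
The paper does not give its own proof of this lemma: it is stated in the appendix purely as a citation of Theorem~2.11 in Chapter~4 of Ethier--Kurtz, with no argument supplied. So there is nothing in the paper to compare your proposal against; what you have written is essentially a sketch of the Ethier--Kurtz proof itself.

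Your overall two-step strategy (finite-dimensional convergence plus tightness in $D([0,T],E)$) is exactly the right one, and the finite-dimensional part is fine. However, the tightness step as written has a genuine gap. What you compute and bound is the \emph{conditional expectation}
\[
   \E\bigl[f(X_N(\sigma_N)) - f(X_N(\tau_N)) \,\big|\, \mathcal{F}_{\tau_N}\bigr],
\]
and you show this is small in sup norm. But Aldous's criterion requires smallness of $|f(X_N(\sigma_N)) - f(X_N(\tau_N))|$ in probability, not merely of its conditional mean; a process can have small drift over short time intervals yet large fluctuations. The fix is to control the conditional \emph{second} moment instead: expand $(f(X_N(\sigma_N)) - f(X_N(\tau_N)))^2$, apply the Markov property to get $P_N(h)\pi_N f^2 - 2(\pi_N f)\, P_N(h)\pi_N f + (\pi_N f)^2$ evaluated at $Y_N(\tau_N)$, and then use the semigroup convergence hypothesis together with strong continuity of $(P(t))$ on both $f$ and $f^2$ to see this vanishes as $\delta\to 0$, $N\to\infty$. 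A second, smaller issue: your bound involves $P_N(\sigma_N-\tau_N)$ with a random time increment, so you implicitly need the semigroup convergence to be uniform for $t$ in bounded intervals---this is in fact part of the Ethier--Kurtz framework (and follows for each $f$ from pointwise-in-$t$ convergence combined with equicontinuity coming from the Feller limit), but it should be stated.
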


\bibliographystyle{alpha}
\bibliography{bib}

\end{document}